\numberwithin{equation}{section}
\theoremstyle{plain}
\newtheorem{theorem}{Theorem}[section]
\newtheorem{corollary}[theorem]{Corollary}
\newtheorem{lemma}[theorem]{Lemma}
\newtheorem{remark}[theorem]{Remark}
\newtheorem{proposition}[theorem]{Proposition}
\theoremstyle{definition}
\newtheorem{definition}[theorem]{Definition}
\theoremstyle{definition}
\def\sup{\operatorname{sup}}
\def\Rm{\operatorname{Rm}}
\def\sup{\operatorname{sup}}
\def\Rm{\operatorname{Rm}}
\def\pr{\partial}
\def\Lap{\Delta}
\def\Hess{\operatorname{Hess}}
\def\R{\mathbb{R}}
\def\sup{\operatorname{sup}}
\def\Rm{\operatorname{Rm}}
\def\sup{\operatorname{sup}}
\def\Rm{\operatorname{Rm}}
\def\pr{\partial}
\def\Lap{\Delta}
\author{Sven Hirsch}
\address{Institute for Advanced Studies, 1 Einstein Drive, Princeton, NJ 08540, USA}
\email{sven.hirsch@ias.edu}
\author{Jonathan J. Zhu}
\address{Department of Mathematics, University of Washington, Seattle, WA, USA}
\email{jonozhu@uw.edu}
\title{Uniqueness of Blowups for Forced Mean Curvature Flow}
\begin{document}
\maketitle

\begin{abstract}
We prove uniqueness of tangent cones for forced mean curvature flow, at both closed self-shrinkers and round cylindrical self-shrinkers, in any codimension. The corresponding results for mean curvature flow in Euclidean space were proven by Schulze and Colding-Minicozzi respectively. We adapt their methods to handle the presence of the forcing term, which vanishes in the blow-up limit but complicates the analysis along the rescaled flow. 
Our results naturally include the case of mean curvature flows in Riemannian manifolds. 
\end{abstract}

\section{Introduction}

Uniqueness of blowups is a fundamental question in the singularity analysis of various geometric partial differential equations. The most important notion of blowup concerning the formation of singularities in geometric flows is the tangent flow - a limit of rescalings about a fixed spacetime point. For mean curvature flow (MCF) of submanifolds $M_s^n$ in Euclidean space, Schulze \cite{schulze2014uniqueness} proved the uniqueness of this limit if the singularity is modelled on a compact shrinking soliton, and recently Colding-Minicozzi \cite{colding2015uniqueness, colding2019regularity} proved uniqueness if the model is a round cylinder. The latter opened the door to a rich regularity theory for mean curvature flows with `generic' singularities in $\mathbb{R}^N$ \cite{CMsurvey}. 

In this paper, we study \textit{forced} MCF. A mean curvature flow with forcing (MCFf) is a family of submanifolds $M_s^n \subset \mathcal{U}^N \subset \mathbb{R}^N$ which evolve by
\begin{equation*}
\frac{dx}{ds} = \mathbf{H} + \mathbf{F}^\perp,
\end{equation*}
where $\mathbf{H}$ is the mean curvature vector and $\mathbf{F} : \mathcal{U} \to \mathbb{R}^N$ is a smooth, ambient vector field. Note that by isometric embedding, a MCF in an ambient Riemannian manifold $(N,g)$ may be locally considered as a MCF with forcing. Upon rescaling $(N,g)$ will resemble Euclidean space. Similarly, if $\mathbf{F}$ is bounded then it vanishes in the blowup limit, that is, the limiting singularity model is a Euclidean soliton. However, as uniqueness concerns the convergence properties of the sequence, not just the limiting model, it is not clear that uniqueness follows simply from the results of Schulze and Colding-Minicozzi. Our main focus is the cylindrical case: 

\begin{theorem}\label{thm:main}
Let $\mathcal U$ be an open subset of $\mathbb R^N$.
Let $M_t^n$ be a smooth, embedded MCF with forcing in $\mathcal{U} \subset \mathbb{R}^N$. If one tangent flow at at a singular point is a multiplicity one cylinder, then the tangent flow at that point is unique. That is, any other tangent flow is also a cylinder (with the same axis and multiplicity one). 
\end{theorem}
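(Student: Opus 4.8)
The plan is to adapt the Colding--Minicozzi strategy \cite{colding2015uniqueness, colding2019regularity} for cylindrical uniqueness to the rescaled MCF with forcing, treating the forcing term as a perturbation that decays under the parabolic rescaling. After translating the assumed singular point to the spacetime origin and passing to the rescaled flow $\Sigma_\tau$ (with $\tau = -\log(-s)$), the flow equation becomes the Gaussian-weighted gradient flow of the entropy functional $F$ plus an error term coming from $\mathbf{F}^\perp$; crucially, because $\mathbf{F}$ is bounded, in the rescaled coordinates $y = x/\sqrt{-s}$ the forcing contributes a term of size $O(\e^{-\tau/2})$ pointwise, which is exponentially small and in particular integrable in $\tau$. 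So the first step is to set up the rescaled equation carefully, identify the weighted $L^2$-norm of the error, and record that it lies in $L^1([\tau_0,\infty))$.

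\medskip

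The core of the argument is a {\L}ojasiewicz-type inequality for the entropy $F$ near the cylinder $\mathbf{C}$, exactly as in Colding--Minicozzi: on the space of graphs over $\mathbf{C}$ with small $C^{2,\alpha}$-norm, one has $\|\nabla F\|_{L^2_{\mathrm{weighted}}} \geq c\, |F - F(\mathbf{C})|^{1-\theta}$ for some $\theta \in (0,1/2]$, together with the companion estimate controlling the $L^2$-distance to the family of cylinders by a power of $|F - F(\mathbf{C})|$. The key observation is that these are statements about the \emph{geometry of the functional $F$}, not about any particular flow, so they carry over verbatim; the presence of the forcing only enters when we differentiate $F$ along the rescaled flow. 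There we compute
\begin{equation*}
\frac{d}{d\tau} F(\Sigma_\tau) = -\|\nabla F\|^2 + \langle \nabla F, \mathcal{E}_\tau \rangle \geq -\|\nabla F\|^2 - \|\nabla F\|\,\|\mathcal{E}_\tau\|,
\end{equation*}
where $\mathcal{E}_\tau$ is the forcing error with $\|\mathcal{E}_\tau\| \leq C\e^{-\tau/2}$. Feeding this into the {\L}ojasiewicz inequality and using a standard Cauchy--Schwarz/ODE argument (as in the {\L}ojasiewicz--Simon scheme) yields that $\int_{\tau_0}^\infty \|\nabla F(\Sigma_\tau)\|\, d\tau < \infty$, where the extra $\int \|\mathcal{E}_\tau\|\, d\tau < \infty$ contribution is absorbed precisely because the forcing error is exponentially small. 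Finiteness of this integral controls the $L^2$-displacement and gives Cauchy-ness of $\tau \mapsto \Sigma_\tau$, hence uniqueness of the limit.

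\medskip

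Several technical inputs are needed to make this rigorous, all of which should go through with forcing-term modifications. First, one needs the preliminary regularity/graphicality statement: once one rescaled limit is a multiplicity-one cylinder, on a definite time interval and in a definite region the rescaled flow is a small $C^{2,\alpha}$-graph over some cylinder; this uses Brakke-type regularity and the no-neck-type estimates, and the bounded forcing only perturbs these locally. Second, the passage from an infinite-time $L^2$-estimate on a fixed compact region to genuine uniqueness of the tangent flow requires the clearing-out/rigidity arguments ruling out the "loss of compactness" and controlling the flow outside the compact region -- here one needs that the discrepancy arising from the noncompactness of the cylinder (Colding--Minicozzi's analysis of the ends) is unaffected, again because the forcing is bounded and subcritical. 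The main obstacle I expect is precisely this last point: ensuring that the forcing error, which is only small in \emph{rescaled} coordinates on bounded regions, does not accumulate along the noncompact directions of the cylinder in a way that destroys the weighted $L^2$ integrability; this should be handled by combining the exponential time-decay with the Gaussian weight and the a priori polynomial volume growth of the rescaled flow, but it requires care. The closed-shrinker case in the companion theorem, by contrast, is cleaner since there compactness removes the end analysis and one only needs the {\L}ojasiewicz--Simon scheme plus the $L^1$-in-time forcing bound, following Schulze \cite{schulze2014uniqueness}.
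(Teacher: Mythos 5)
There is a genuine gap, and it is located exactly where your proposal says things ``carry over verbatim.'' For the noncompact cylinder there is no Łojasiewicz--Simon inequality of the clean form $\|\nabla F\|_{L^2}\geq c\,|F-F(\Gamma)|^{1-\theta}$ valid on all graphs with small $C^{2,\alpha}$ norm: the Colding--Minicozzi inequalities are scale-dependent statements for graphs over balls $B_R$, carrying error terms of size $R^{n-2}e^{-R^2/4}$ (and cutoff errors from the ends), and they only become useful once one knows the rescaled flow is graphical over a ball whose radius beats the shrinker scale $R_T$ defined by $e^{-R_T^2/2}=\int_{T-1}^{T+1}\|\phi\|_{L^2}^2\,dt$. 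Establishing that relation is not a property of the functional $F$; it is a property of the flow, proved by iterating an \emph{extension step} (closeness on $B_R$ propagates to $B_{(1+\mu)R}$) against the Colding--Minicozzi \emph{improvement step}, and the output is a \emph{discrete} differential inequality for the (modified) Gaussian area, not the continuous ODE scheme you describe. Your continuous ``Łojasiewicz $+$ Cauchy--Schwarz ODE'' argument is essentially the compact-shrinker strategy (Schulze's setting, Theorem \ref{thm:compact}); it does not close in the cylindrical case even for unforced MCF, which is precisely why Colding--Minicozzi had to introduce the improvement/extension iteration.

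Moreover, the extension step is where the forcing actually bites, and your proposal relegates it to ``technical inputs [that] should go through.'' With forcing, Huisken monotonicity fails, so one cannot quote entropy bounds or the monotone $F$; one must work with a localized functional $\hat F(t)=\int_{\Sigma_t}\psi_t^2\rho$ corrected by exponential factors (the $\tilde F$ of Section \ref{sec:uniqueness-cyl}), derive area bounds from an almost-monotonicity formula, and replace the shrinker scale by the modified scale $e^{-R_*^2/2}=e^{-R_T^2/2}+e^{-T/2}$, checking that the compounded exponential errors still solve the discrete inequality with a usable rate. The extension step itself needs pulling Gaussian density estimates backwards in time for the forced flow, White's Brakke regularity for \emph{almost} Brakke flows, interior curvature estimates for MCFf, a mean value inequality for $\phi$, and short-time stability of the cylinder; none of these is a formal perturbation of the unforced statements, and together they constitute the main new content of the proof. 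Finally, your last step (Cauchy-ness from $\int\|\nabla F\|\,d\tau<\infty$) presupposes a graphical representation over a \emph{fixed} cylinder at all large times on a growing scale; in the paper this is supplied by the rigidity of the cylinder adapted to forced flows (Proposition \ref{CIM03}) plus the scale comparison theorem, neither of which your outline provides.
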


We also cover the compact case, which is somewhat simpler, but will also be instructive of the ultimate strategy for proving uniqueness:

\begin{theorem}\label{thm:compact}
Let $M_t^n$ be a smooth, embedded MCF with forcing in $\mathcal{U} \subset \mathbb{R}^N$. If one tangent flow at at a singular point is a smooth closed shrinker $\Gamma$ with multiplicity 1, then the tangent flow at that point is unique. That is, any other tangent flow is also induced by $\Gamma$ (with multiplicity 1). 
\end{theorem}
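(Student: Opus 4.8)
The plan is to adapt Schulze's argument \cite{schulze2014uniqueness} for uniqueness at compact shrinkers, run on Huisken's rescaled flow, while keeping careful track of the forcing term, which enters the rescaled equation with an exponentially small coefficient and is therefore harmless over the infinite rescaled time interval. Suppose the singularity forms at the interior point $(x_0,T)$. Setting $\tau=-\log(T-t)$ and $\Sigma_\tau=e^{\tau/2}(M_t-x_0)$, one computes that the normal velocity of $\Sigma_\tau$ is
\[
(\partial_\tau x)^\perp=\mathbf H_{\Sigma_\tau}+\tfrac12 x^\perp+\big(e^{-\tau/2}\,\mathbf F(x_0+e^{-\tau/2}x)\big)^\perp .
\]
Since one tangent flow at $(x_0,T)$ is $\Gamma$ with multiplicity one and $\Gamma$ is smooth and compact, Brakke--White regularity (applicable here because $\mathbf F$ is bounded) produces a sequence $\tau_i\to\infty$ along which $\Sigma_{\tau_i}\to\Gamma$ smoothly. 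In particular the surfaces under consideration lie in a fixed compact subset of $\mathcal U$ after undoing the rescaling, so $\mathbf F$ and its derivatives are uniformly bounded there; hence the forcing term above and its spatial derivatives are $O(e^{-\tau/2})$, and the rescaled forced flow is an exponentially small perturbation of the rescaled mean curvature flow, uniformly parabolic with controlled coefficients once $\Sigma_\tau$ is a small $C^{2,\alpha}$ graph over $\Gamma$.

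Let $F[\Sigma]=(4\pi)^{-n/2}\int_\Sigma e^{-|x|^2/4}$, $\rho=(4\pi)^{-n/2}e^{-|x|^2/4}$, and $g(\tau)^2=\int_{\Sigma_\tau}|\mathbf H+\tfrac12 x^\perp|^2\rho$. Huisken's monotonicity computation, carried out for the forced flow, gives $\tfrac{d}{d\tau}F[\Sigma_\tau]=-g(\tau)^2-e^{-\tau/2}\int_{\Sigma_\tau}\langle\mathbf F^\perp,\mathbf H+\tfrac12 x^\perp\rangle\rho\le-\tfrac12 g(\tau)^2+Ce^{-\tau}$, so the corrected quantity $\hat F(\tau):=F[\Sigma_\tau]+Ce^{-\tau}$ is non-increasing, bounded below, and decreases to $F[\Gamma]$ (its value along the $\tau_i$); in particular $F[\Sigma_\tau]\to F[\Gamma]$ and $f(\tau):=\hat F(\tau)-F[\Gamma]\downarrow 0$. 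The \L ojasiewicz--Simon inequality for $F$ at the compact shrinker $\Gamma$ --- a statement about the static functional near $\Gamma$, unaffected by the forcing --- provides $\theta\in(0,\tfrac12]$ and $C,\delta>0$ with $|F[\Sigma]-F[\Gamma]|^{1-\theta}\le Cg$ whenever $\Sigma$ is a graph over $\Gamma$ of $C^{2,\alpha}$ norm at most $\delta$; since $|F[\Sigma_\tau]-F[\Gamma]|\le f(\tau)+Ce^{-\tau}$, this yields $f(\tau)^{1-\theta}\le Cg(\tau)+Ce^{-(1-\theta)\tau}$ on the graphical region. Now run the Simon/Schulze interpolation, splitting the error off: on the set where $g(\tau)\ge Ce^{-(1-\theta)\tau}$ one has $g\ge c f^{1-\theta}$, hence $\tfrac{d}{d\tau}f^\theta=\theta f^{\theta-1}\tfrac{d}{d\tau}f\le-\tfrac\theta2 f^{\theta-1}g^2\le-cg$, whereas on the complement $g\le Ce^{-(1-\theta)\tau}$ is already integrable in $\tau$. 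Since $\|\partial_\tau\Sigma_\tau\|_{L^2(\rho)}\le g(\tau)+Ce^{-\tau/2}$, it follows that as long as $\Sigma_\tau$ remains a $\delta$-graph over $\Gamma$ for $\tau\ge\tau_i$ we get $\int_{\tau_i}^\infty\|\partial_\tau\Sigma_\tau\|_{L^2(\rho)}\,d\tau\le C f(\tau_i)^\theta+Ce^{-(1-\theta)\tau_i}+Ce^{-\tau_i/2}$, which is finite and tends to $0$ as $i\to\infty$.

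It remains to verify the \emph{no-escape} step: $\Sigma_\tau$ never leaves the $\delta$-neighbourhood of $\Gamma$. If $\tau'>\tau_i$ were the first time $\Sigma_{\tau'}$ failed to be a $\delta$-graph, then \L ojasiewicz would hold on $[\tau_i,\tau')$ and, upgrading the $L^2(\rho)$ bound on $\partial_\tau\Sigma_\tau$ to a $C^{2,\alpha}$ bound via interior parabolic estimates for the forced rescaled flow, the total $C^{2,\alpha}$ motion of $\Sigma_\tau$ over $[\tau_i,\tau')$ would be at most $C f(\tau_i)^\theta+Ce^{-(1-\theta)\tau_i}+Ce^{-\tau_i/2}+\|\Sigma_{\tau_i}-\Gamma\|_{C^{2,\alpha}}$, which is less than $\delta$ once $i$ is large --- contradicting escape. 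Hence $\Sigma_\tau$ stays a $\delta$-graph over $\Gamma$ for all $\tau\ge\tau_i$, so $\int_{\tau_i}^\infty\|\partial_\tau\Sigma_\tau\|_{L^2(\rho)}\,d\tau<\infty$ and, with the uniform higher-order estimates, $\Sigma_\tau$ converges smoothly to a limit $\Sigma_\infty$, necessarily a smooth shrinker and a small graph over $\Gamma$. Since also $\Sigma_{\tau_i}\to\Gamma$, we conclude $\Sigma_\infty=\Gamma$; and any other tangent flow at $(x_0,T)$ is a subsequential limit of $\{\Sigma_\tau\}$, hence equals $\Gamma$ with multiplicity one, proving Theorem~\ref{thm:compact}.

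The one genuinely new point, and the step I expect to need the most care, is the robustness of this \L ojasiewicz/interpolation scheme under the forcing error: although $e^{-\tau/2}$ is integrable in $\tau$, one must check that at the "bad" times --- where $g$ is comparable to or smaller than the error --- the \L ojasiewicz inequality is never invoked, and that the error can never push $F$ upward by a non-summable amount. Both are arranged by the splitting of the time axis above, which works precisely because the forcing contributes at the integrable rate $e^{-\tau/2}=\sqrt{T-t}$. Apart from this bookkeeping and the routine parabolic estimates for the forced rescaled flow, the argument is Schulze's; the cylindrical Theorem~\ref{thm:main} will require the same philosophy but with the Colding--Minicozzi machinery \cite{colding2015uniqueness,colding2019regularity} in place of \L ojasiewicz--Simon.
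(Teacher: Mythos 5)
Your overall scheme is the same as the paper's: replace $F$ by an exponentially corrected monotone quantity, invoke Schulze's {\L}ojasiewicz--Simon inequality (Theorem \ref{thm:compact-lojasiewicz}) with an $e^{-c\tau}$ error from the forcing, and integrate to control the total motion of the rescaled flow; your use of Simon's $\frac{d}{d\tau}f^\theta\le -cg$ trick with a good/bad time splitting is an acceptable variant of the paper's route (Theorem \ref{T:differential inequality compact} plus the weighted Cauchy--Schwarz argument of Theorem \ref{3.4}), and both deliver the summability you need.

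The gap is in your ``no-escape'' step. You claim that ``the total $C^{2,\alpha}$ motion of $\Sigma_\tau$ over $[\tau_i,\tau')$'' is bounded by $Cf(\tau_i)^\theta+Ce^{-(1-\theta)\tau_i}+Ce^{-\tau_i/2}+\|\Sigma_{\tau_i}-\Gamma\|_{C^{2,\alpha}}$, justified by ``upgrading the $L^2(\rho)$ bound on $\partial_\tau\Sigma_\tau$ to a $C^{2,\alpha}$ bound via interior parabolic estimates.'' This does not follow: the {\L}ojasiewicz argument controls $\int_{\tau_i}^{\tau'}\|\partial_\tau\Sigma_\tau\|_{L^2(\rho)}\,d\tau$, while interior estimates only give uniform (pointwise-in-time) higher-order bounds, and interpolation yields $\|\partial_\tau\Sigma_\tau\|_{C^{2,\alpha}}\le C\|\partial_\tau\Sigma_\tau\|_{L^2(\rho)}^{1-\eta}$ with a genuine loss of exponent; integrating the fractional power over a time interval of unbounded length is not controlled by any power of $\int\|\partial_\tau\Sigma_\tau\|_{L^2(\rho)}\,d\tau$ (H\"older costs a factor $(\tau'-\tau_i)^{\eta}$). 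So the quantity you call the total $C^{2,\alpha}$ motion is simply not bounded by the data you have, and the first-escape-time contradiction as written fails. This is exactly the point the paper isolates into Lemma \ref{Schulze2.2} (Schulze's Lemma 2.2): keeping track of two scales --- an a priori $C^{2,\alpha}$ bound $\sigma$ on a backward time interval of definite length together with $L^2$-smallness $\epsilon_0$, plus the density hypothesis $\Theta_{(0,0)}\ge F(\Gamma)$ --- one extends the graphical representation forward by a fixed amount via compactness of almost Brakke flows and White's local regularity, and then the uniqueness proof is an induction alternating this extension with the $L^2$ distance decay. Your step can be repaired either by quoting such a lemma, or by a fixed-time (not time-integrated) argument: the $L^2$ distance of $\Sigma_\tau$ to $\Gamma$ stays small by your integral bound, parabolic smoothing gives uniform $C^k$ bounds while the surface is a $\sigma$-graph, and interpolation at each fixed time then forces the $C^{2,\alpha}$ norm below $\delta$ at the purported escape time --- but that is essentially a re-proof of Lemma \ref{Schulze2.2}, not the bookkeeping you describe, so as it stands the proposal is missing this ingredient.
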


In both cases we apply the general method of deriving uniqueness from a {\L}ojasiewicz inequality for the rescaled flow. In the compact case we are able to use the Simon-{\L}ojasiewicz inequality due to Schulze \cite{schulze2014uniqueness}, while in the cylindrical case we prove a new {\L}ojasiewicz-type inequality following the methods of Colding-Minicozzi \cite{colding2015uniqueness, colding2019regularity}. We remark that our methods are fairly general and should also apply, for instance, to the class of singularity models studied by the second named author in \cite{zhu2020ojasiewicz}.

\subsection{Background and history}

Geometric flows have led to many striking results in topology, geometry and general relativity during the last decades, including proofs of the Poincar\'e conjecture \cite{perelman2002entropy}, the Differentiable Sphere Theorem \cite{brendle2009manifolds}, and the Riemannian Penrose Inequality \cite{huisken2001inverse}.

\emph{Mean curvature flow} (MCF) is the parabolic analog of minimal surfaces.
Apart from their intrinsic appeal, minimal surfaces had many geometric applications and contributed to our understanding of manifolds with lower bounds on their curvature.
Recently, surfaces of prescribed mean curvature have attracted much attention.
They arise naturally as isoperimetric surfaces and $\mu$-bubbles and led to several scalar curvature results which previously have been inaccessible via the classical minimal surface or Dirac operator methods \cite{chodosh2020generalized, gromov2020no}.

The parabolic analog of surfaces of prescribed mean curvature is mean curvature flow with forcing (MCFf).
In view of Nash's embedding theorem, MCFf in higher codimension also generalizes MCF in Riemannian manifolds.
For instance, MCF in Riemannian manifolds recently led to Urysohn width and waist inequalities \cite{liokumovich2020waist}.

Most geometric PDE exhibit singular behavior, and it is of great importance to better understand these singularities. Typically, this is done by rescaling arguments, and an important question is the uniqueness of blowups at a singular point. For stable minimal surfaces, this has been resolved by Simon who showed in the pioneering work \cite{simon1983asymptotics} uniqueness of tangent cones. The foundation of Simon's proof is an infinite dimensional \emph{{\L}ojasiewicz inequality} which he established using Lyapunov-Schmidt reduction.

{\L}ojasiewicz inequalities have been a very active area of research the last years. For MCF, Schulze applied Simon's work to prove a {\L}ojasiewicz inequality near compact shrinkers. More recently, Colding-Minicozzi proved {\L}ojasiewicz-type inequalities near the round cylinder in Euclidean space. As mentioned above, these results were used to prove uniqueness of tangent flows in the respective cases. For other results on {\L}ojasiewicz inequalities for geometric PDE, the reader may consult for instance, \cite{CMein, deruelle2020ojasiewicz, F19, F20, sun2020rigidity, zhu2020ojasiewicz}. 

With uniqueness at cylindrical tangent flows in hand, Colding-Minicozzi were able to develop a regularity theory for MCF in Euclidean space with cylindrical singularities, including sharp estimates on the singular set and regularity results for the arrival time \cite{CMsing, CMdiff, CMreg, CMarnold}. We expect that, as a consequence of our results here, the corresponding results also hold for MCFf and for MCF in arbitrary manifolds which encounter only cylindrical singularities (in particular for mean convex MCF).

\subsection{Proof strategy}

Let us give a brief description of the proofs of Theorem \ref{thm:main} and Theorem \ref{thm:compact}, beginning with Theorem \ref{thm:compact} as it is indicative of the general ``direct" method for uniqueness.

For MCF (without forcing), one observes that MCF corresponds to the gradient flow for the area functional. Moreover, the rescaled flow $\Sigma_t = e^{t/2}M_s$, $t=-\ln(-s)$ is the gradient flow for the Gaussian area $F(\Sigma^n) = (4\pi)^{-n/2} \int_\Sigma e^{-\frac{|x|^2}{4}}$. Uniqueness of the tangent flow to $M_s$ at $(0,0)$ is equivalent to uniqueness of the $t\to \infty$ limit of the rescaled flow. 

The critical points of $F$ are so-called shrinkers, which satisfy the elliptic PDE $\phi:= \mathbf{H} + \frac{x^\perp}{2}=0$, where $\mathbf H$ is the mean curvature vector. Using Simon's {\L}ojasiewicz inequality, Schulze proved an inequality bounding the oscillation of $F$ by a power of $\|\phi\|_{L^2}$ near a compact shrinker $\Gamma$. A key lemma is that surfaces initially close to a compact shrinker remain close forwards in time. Using these ingredients and an inductive argument, Schulze proved a differential inequality for $F(\Sigma_t) - F(\Gamma)$, the solution of which yields a rate of convergence, and in particular implies uniqueness. 

MCF with forcing is not the gradient flow of $F$, and so in our setting $F$ is no longer monotone. Instead, we perturb $F$ to obtain a new monotone quantity $\tilde F_t$, and also prove a stability lemma for almost Brakke flows close to a shrinker. To use Schulze's Simon-{\L}ojasiewicz inequality, we compare $\tilde{F}_t$ to $F$, which results in an additional term in the resulting differential inequality. Fortunately, the error term is exponential, so we can complete the argument if we wait until a large initial time. 

For cylindrical singularities, Colding-Minicozzi \cite{colding2015uniqueness, colding2019regularity} introduced several key innovations to deal with the significant problem of a noncompact limiting object. They developed a method to directly prove {\L}ojasiewicz inequalities by iterated \emph{improvement} and \emph{extension}. Their `improvement step' can be thought of as a {\L}ojasiewicz inequality for surfaces close enough to a cylinder on a large enough set. Their `extension step', on the other hand, extends the closeness to the cylinder in \textit{space} (we well as time); this also has the effect of reducing error terms in the improvement step. By another inductive argument, they are able to prove a recurrence or \textit{discrete} differential inequality for $F$, the solution of which implies uniqueness. 

In our setting of MCF with forcing we encounter again several difficulties related to the loss of gradient flow structure. Actually, even for MCF (without forcing), we also have to deal with some loss of monotonicity when working locally, due to the noncompactness of the cylinder. One of the main components of this paper is in proving a suitable `extension step'. The argument relies on several monotonicity-type estimates to compare the flow at different points in spacetime, which is complicated by the lack of monotonicity for $F$. It also relies on White's version of Brakke regularity for almost Brakke flows, and higher order interior curvature estimates for such flows. A proof of the latter is also included as, to the best of the authors' knowledge, it is not yet in the literature. 

Following the Colding-Minicozzi method, we then combine our extension step with the Colding-Minicozzi `improvement step' to prove a scale comparison theorem, which relates the `cylindrical scale' (that spatial scale on the rescaled flow is close to a cylinder) with the `shrinker scale' defined by $e^{-R_T^2/2} = \int_{T-1}^{T+1} \|\phi\|_{L^2}^2 dt$. However, due to the localization and other error terms, we have to modify the shrinker scale by an exponential error term. It turns out that this error, even after being compounded in both space and time, is small enough that the discrete differential inequality (for the modified functional) still gives a good rate of convergence, and hence uniqueness. For the final uniqueness, note that we adapt the arguments of \cite{colding2015uniqueness} based on the rigidity of the cylinder, rather than the arguments of \cite{colding2019regularity}. 

\subsection*{Overview of the paper}

In Section \ref{s:preliminaries} we establish our notation as well as our notion of rescaled flow, which is used throughout the paper. We also prove certain area bounds which replace entropy-monotonicity. We are then able to immediately prove Theorem \ref{thm:compact} in Section \ref{sec:compact}. The reader may consider this a lighter introduction to the proof strategy used for the later cylindrical case. 

In Section \ref{sec:extension}, we prove our `extension step' for graphs over a sufficiently large portion of the cylinder. This is combined in Section \ref{sec:scale-comparison} with Colding-Minicozzi's `improvement step' to compare the cylindrical scale with our modified shrinker scale. The cylindrical uniqueness Theorem \ref{thm:main} is proven in Section \ref{sec:uniqueness-cyl}, which also contains certain technical modifications of Section \ref{sec:compact} to handle the noncompact case. 

Appendix \ref{A: discrete} deals with the solution of the discrete differential inequality while Appendix \ref{sec:evol-phi} contains a calculation of the evolution of $\phi$ along MCFf.  Finally, Appendix \ref{sec:interior-estimates} handles interior estimates for MCFf, in the spirit of Ecker-Huisken \cite{ecker1991interior}. 

\subsection*{Acknowledgements:} This work was initiated while the first author visited Princeton University and he is grateful to the math department's hospitality.
SH was supported in part by the National Science Foundation under Grant No. DMS-1926686, and by the IAS School of Mathematics.
JZ was supported in part by the National Science Foundation under grant DMS-1802984 and the Australian Research Council under grant FL150100126.

\section{Preliminaries}\label{s:preliminaries}

\subsection{Notation}

We mainly consider submanifolds $\Sigma^n\subset \mathbb{R}^N$. For a vector $v$ we denote by $v^T$ and $v^\perp= \Pi(v)$ the components tangent and normal to $\Sigma$, respectively. 

We define the mean curvature vector to be the negative trace of the second fundamental form, $\mathbf{H} = -A_{ii}$. The \textit{shrinker mean curvature} is $\phi = \mathbf{H} +\frac{x^\perp}{2}$. 

The (spatial) $L^2$-norm will always be weighted by the Gaussian $\rho(x) = (4\pi)^{-n/2}\exp(-\frac{|x|^2}{4})$. 

Given a submanifold $\Sigma$ and a vector field $U$, we may consider the graph $\Sigma_U := \{x+U(x) | x\in \Sigma\}$. We call this a normal graph if $U$ is a normal vector field on $\Sigma$. When the base $\Sigma$ is clear from context, we write $\phi_U$ for the shrinker quantity associated to the normal graph $\Sigma_U$.

\begin{definition}
We say that $\Sigma$ is $(C^{2,\alpha},\epsilon)$-close to $\Gamma$ if $\Sigma$ may be written as the graph of a normal vector field $U$ over (a subset of) $\Gamma$ with $\|U\|_{C^{2,\alpha}}\leq \epsilon$. We say that $\Sigma$ is $(C^{2,\alpha},\epsilon)$-close to $\Gamma$ on $B_R$ if $\Sigma\cap B_R$ is $(C^{2,\alpha},\epsilon)$-close to $\Gamma$. 
\end{definition}

\subsection{Forced flows and rescaling}
\label{sec:localisation-rescaling}

Fix once and for all $r_0>0$.
We will always assume that $M_s$ is a MCF with forcing (MCFf) in $B_{4r_0}$, that is, $M_s$ is a one-parameter family of submanifolds with no boundary in $B_{4r_0} \subset \mathbb{R}^N$, which satisfy $\frac{dx}{ds} = \mathbf{H} + \mathbf{F}^\perp$. We assume $\|\mathbf{F}\|_{C^k} \leq K$ is uniformly bounded on $B_{4r_0}$. In particular $M_s$ is a $K$-almost Brakke flow in $B_{4r_0}$.

The corresponding rescaled flow (which we abbreviate RMCFf) is $\Sigma_t := e^{t/2} M_s$, $t=-\ln(-s)$, and (up to reparametrisation) satisfies $\frac{dx}{dt} = \phi + e^{-t/2}\mathbf{G}^\perp$. Here $\mathbf{G}(x,t) = \mathbf{F}(e^{t/2}x, s)$. 

Throughout this paper, a RMCFf will always be a flow obtained by rescaling a MCFf as above. 

To investigate uniqueness of tangent flows at $s=0$, we need only consider a short time interval $[s_*,0]$ beforehand, $|s_*|\ll1$, and in particular we can assume $\sup_{[-s_*,0]} \mathcal{H}^n(M_s) \leq \mu$ for some $\mu<\infty$.
In particular, we only need to consider the rescaled flow $\Sigma_t$ for $t\gg1$.

\begin{remark}
To prove uniqueness, one ultimately needs to control the velocity $\tilde{\phi} = \phi + e^{-t/2}\mathbf{G}^\perp$ of the rescaled flow, which differs from the shrinker mean curvature $\phi$ by a forcing term. We have chosen to state our estimates for $\phi$, to be consistent with the {\L}ojasiewicz inequalities (which do not involve a flow), with the trade-off of being less direct in estimating the velocity $\tilde{\phi}$.
\end{remark}

\subsection{Gaussian area functionals}

Let $\rho_{y,s}(x) = (4\pi s)^{-n/2} \exp(-\frac{|x-y|^2}{4s})$ and $\Phi_{y,\sigma}(x,s) = \rho_{y,\sigma-s} (x)$. 
The usual $F$-functionals are $F_{y,\sigma}(\Sigma) = \int_\Sigma \rho_{y,\sigma}$, with the distinguished functional $F=F_{0,1}$. The entropy of a submanifold $\Sigma$ measures its geometric complexity and is defined as $\lambda(\Sigma) = \sup_{y\in \mathbb{R}^N, \sigma>0} F_{y,\sigma}(\Sigma)$. 
The normalization of $F$ ensures that $\lambda(\R^n\subset \R^N)=1$.

\subsection{Almost monotonicity and area bounds}
\label{sec:almost-monotonicity}

Fix once and for all a smooth cutoff function $0\leq \psi\leq 1$ such that $\psi=1$ in $B_{3r_0}$ and $\psi=0$ outside $B_{4r_0}$, with $r_0 |D\psi| + r_0^2 |D^2\psi| \leq K_\psi$. 

For unforced MCF, the monotonicity of the Colding-Minicozzi entropy (derived from Huisken's monotonicity formula) provides uniform area growth bounds in terms of area bounds on the initial slice. 

For MCF with forcing, Huisken's monotonicity no longer holds. Instead, for MCFf as above, we derive area bounds for $M_t\cap B_{2r_0}$ based on almost-monotonicity formulae. For any submanifold $M^n$ define 
\begin{equation*}
F^\psi_{y,\sigma} (M) = \int_M \psi\rho_{y,\sigma}.
\end{equation*} 
Then 
\begin{align*}
F_{y,\sigma}(M \cap B_{3r_0}) \leq F^\psi_{y,\sigma}(M) \leq F_{y,\sigma}(M\cap B_{4r_0}).
\end{align*}

Note that $|D\Phi_{y,\sigma}| \leq \frac{|x-y|}{2(\sigma-s)}$. Following the calculations of Ilmanen \cite[Proof of Lemma 7]{ilmanen1995singularities} and White \cite[Sections 10-11]{white1997stratification} we have, for any $y\in B_{r_0}$ and $\sigma>s$, the almost monotonicity formula 
\begin{align*}
\frac{d}{dt} F^\psi_{y,\sigma-s}(M_s) +\frac{1}{2} \int_{M_s}\psi Q^2 \Phi_{y,\tau} \leq \frac{K^2}{2} F^\psi_{y,\sigma-s}(M_s) + \left(\frac{1}{16r_0^2} + \frac{K_\psi}{\sigma-s}\right) \int_{M_s} \Phi_{y,\sigma} \mathbf{1}_{B_{4r_0}\setminus B_{3r_0}}
\end{align*}
where
\begin{align*}
Q=\left| H  +\frac{(x-y)^\perp}{2(\sigma-s)}  -\frac{(D\psi)^\perp}{\psi} \right|.
\end{align*}
Note that $2r_0\leq|x-y|\leq 5r_0$  for $x\in B_{4r_0}\setminus B_{3r_0}$. So the last term is bounded by \[\mu \left(\frac{1}{16r_0^2} + \frac{K_\psi}{\sigma-s} \right) (4\pi(\sigma-s))^{-n/2}e^{-\frac{r_0^2}{\sigma-s}}\] where $\mu$ is the global area bound as in Section \ref{sec:localisation-rescaling}. White notes that this is bounded for $|\sigma-s|\lesssim r^2$, but in fact if we set $z = \frac{\sigma-s}{r_0^2}$, then the error term is given by $(4\pi)^{-n/2}\mu r_0^{-n-2} (\frac{1}{16}+K_\psi z^{-1}) z^{-n/2} e^{-1/z}$. The latter is bounded by $\gamma:=\mu r_0^{-n-2} c(K_\psi,n)$ for all $z>0$.

As in White \cite[Proposition 11]{white1997stratification} this gives the almost monotonicity:

\begin{lemma}\label{J mono}
Let $y\in B_{r_0}$. Then the quantity 
\begin{align*}
J_{y,\bar{\sigma}}(s):= e^{\frac{K^2}{2}(\bar{\sigma}-s)} F^\psi_{y,\bar{\sigma}-s}(M_t) + \frac{2\gamma}{K^2}(e^{\frac{K^2}{2}(\bar{\sigma}-s)}-1)
\end{align*}
 is non-decreasing for $s_*<s<\sigma$. 
\end{lemma}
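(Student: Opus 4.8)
The plan is to obtain the lemma simply as the integrated form of the almost-monotonicity formula established just above; the whole argument amounts to one integrating-factor computation, so there is no real obstacle beyond the elementary bookkeeping (already done in the preceding paragraph) that makes the localization error term uniformly bounded.

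First I would reduce the almost-monotonicity formula to a scalar, Gronwall-type differential inequality for the single function $f(s):=F^\psi_{y,\bar\sigma-s}(M_s)$. On the left-hand side the term $\tfrac12\int_{M_s}\psi\,Q^2\,\Phi_{y,\bar\sigma-s}$ is manifestly non-negative --- all three factors are non-negative --- so discarding it only weakens the inequality. On the right-hand side the localization error $\big(\tfrac1{16r_0^2}+\tfrac{K_\psi}{\bar\sigma-s}\big)\int_{M_s}\Phi_{y,\bar\sigma-s}\,\mathbf 1_{B_{4r_0}\setminus B_{3r_0}}$ is bounded above, uniformly in $s$ and $\bar\sigma$, by the constant $\gamma=\mu r_0^{-n-2}c(K_\psi,n)$: this is precisely the estimate carried out in the paragraph above, using $2r_0\le|x-y|\le 5r_0$ on $B_{4r_0}\setminus B_{3r_0}$, the global area bound $\mu$, and the boundedness of $z\mapsto(\tfrac1{16}+K_\psi z^{-1})z^{-n/2}e^{-1/z}$ on all of $(0,\infty)$. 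What remains is then a linear differential inequality for $f$ whose linear coefficient is $\tfrac{K^2}2$ --- this is the contribution of the forcing field $\mathbf F$, absorbed via Young's inequality --- and whose inhomogeneous term is the constant $\gamma$ coming from the cutoff $\psi$.

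Integrating such an inequality is the standard integrating-factor step: one multiplies through by the exponential $e^{\frac{K^2}2(\bar\sigma-s)}$ and writes the constant $\gamma$ via the explicit particular solution $\tfrac{2\gamma}{K^2}\big(e^{\frac{K^2}2(\bar\sigma-s)}-1\big)$. The combination that results is exactly $J_{y,\bar\sigma}(s)$, and the differential inequality turns precisely into the statement that $J_{y,\bar\sigma}$ is non-decreasing on $(s_*,\bar\sigma)$, which is the conclusion of the lemma. This is the direct counterpart, in the presence of forcing and of the spatial cutoff, of White's argument \cite[Proposition 11]{white1997stratification} and of Ilmanen's \cite[Proof of Lemma 7]{ilmanen1995singularities}: the exponential factor in $J_{y,\bar\sigma}$ accounts for the forcing, while the additive term $\tfrac{2\gamma}{K^2}(e^{\frac{K^2}2(\bar\sigma-s)}-1)$ accounts for the boundary error produced by localizing with $\psi$. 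I expect the one point that really needs care --- and it is already dealt with above --- to be the \emph{uniform} boundedness of the localization error as $\bar\sigma-s\to0$, which uses the favorable decay $z^{-n/2}e^{-1/z}$ rather than merely a bound on bounded ranges of $z$.
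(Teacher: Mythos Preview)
Your approach is correct and is exactly what the paper intends: it gives no proof beyond citing White~\cite[Proposition 11]{white1997stratification}, and your sketch is precisely the integrating-factor argument implicit there, applied to the scalar inequality $f'(s)\le \tfrac{K^2}{2}f(s)+\gamma$ obtained by discarding the $Q^2$-term and invoking the uniform bound on the localization error. One caveat worth flagging: carrying out the differentiation actually gives $J_{y,\bar\sigma}'(s)\le 0$, i.e.\ $J$ is non-\emph{increasing} in $s$, not non-decreasing as the lemma is stated --- this is a typo in the paper, and the intended monotonicity is confirmed by the way the lemma is used immediately afterward ($J_{y,\bar\sigma}(s)\le J_{y,\bar\sigma}(s_*)$ for $s>s_*$).
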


Given $y\in B_{r_0}$ and $s\in [s_*,0]$ and $\sigma>0$, choose $\bar{\sigma}= \sigma+s$. Then $J_{y,\sigma}(s) \leq J_{y,\sigma}(s_*)$ yields that 
\begin{align*}
F^\psi_{y,\sigma}(M_s) \leq e^{\frac{K^2}{2} s_*} F^\psi_{y,\bar{\sigma}-s_*}(M_{s_*}) + \gamma (e^{\frac{K^2}{2}(s-s_*)}-1).
\end{align*}

\begin{corollary}
\label{cor:area-bound}
For $t\in [t_*,0]$ we have 
\begin{align*}
\sup_{y\in B_{r_0}, \sigma>0} F_{y,\sigma}(M_t \cap B_{3r_0}) \leq e^{\frac{K^2}{2} s_*} \lambda(M_{s_*} ) + \gamma (e^{\frac{K^2}{2}(s-s_*)}-1).
\end{align*}
 In particular, for small enough $s_*$ depending only on $K$, we have 
\begin{align}\label{eq:entropy bound implies area bound}
\sup_{y\in B_{r_0}, \sigma>0} F_{y,\sigma}(M_s \cap B_{3r_0})  \leq 2 \lambda(M_{s_*} ) + 2\gamma.
\end{align}
\end{corollary}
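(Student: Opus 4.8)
## Proof Plan for Corollary \ref{cor:area-bound}

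The plan is to chain together the almost-monotonicity from Lemma \ref{J mono} with the two-sided comparison between $F_{y,\sigma}$, $F^\psi_{y,\sigma}$ and restriction to balls, and then take a supremum. First I would fix $y\in B_{r_0}$, $s\in[s_*,0]$ and $\sigma>0$, and set $\bar\sigma = \sigma+s$ as suggested in the text. Applying the monotonicity $J_{y,\bar\sigma}(s)\le J_{y,\bar\sigma}(s_*)$ and expanding the definition of $J$, one gets
\begin{equation*}
e^{\frac{K^2}{2}(\bar\sigma-s)} F^\psi_{y,\bar\sigma-s}(M_s) + \tfrac{2\gamma}{K^2}\bigl(e^{\frac{K^2}{2}(\bar\sigma-s)}-1\bigr) \le e^{\frac{K^2}{2}(\bar\sigma-s_*)} F^\psi_{y,\bar\sigma-s_*}(M_{s_*}) + \tfrac{2\gamma}{K^2}\bigl(e^{\frac{K^2}{2}(\bar\sigma-s_*)}-1\bigr).
\end{equation*}
Since $\bar\sigma - s = \sigma$ and $\bar\sigma - s_* = \sigma + s - s_* = \sigma + (s-s_*)$, dividing by $e^{\frac{K^2}{2}\sigma}$ and rearranging gives exactly the displayed intermediate inequality in the excerpt,
\begin{equation*}
F^\psi_{y,\sigma}(M_s) \le e^{\frac{K^2}{2}(s-s_*)} F^\psi_{y,\bar\sigma-s_*}(M_{s_*}) + \tfrac{2\gamma}{K^2}\bigl(e^{\frac{K^2}{2}(s-s_*)}-1\bigr),
\end{equation*}
where I absorb the constant $2/K^2$ into the definition of $\gamma$ (or, more carefully, track it and note it only changes the dimensional constant $c(K_\psi,n)$; the cleanest route is to state $\gamma$ so that this absorption is built in, matching the excerpt's own bookkeeping).

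Next I would use the sandwich inequality $F_{y,\sigma}(M\cap B_{3r_0}) \le F^\psi_{y,\sigma}(M) \le F_{y,\sigma}(M\cap B_{4r_0})$ on both sides: the left end controls the quantity we want to bound, and the right end lets us estimate the $M_{s_*}$ term by $F_{y,\bar\sigma-s_*}(M_{s_*}\cap B_{4r_0}) \le \lambda(M_{s_*})$, since the entropy is by definition the supremum of all the $F_{y',\sigma'}$ functionals over the whole submanifold (the restriction to $B_{4r_0}$ only decreases the integral). This yields
\begin{equation*}
F_{y,\sigma}(M_s\cap B_{3r_0}) \le e^{\frac{K^2}{2}(s-s_*)}\lambda(M_{s_*}) + \gamma\bigl(e^{\frac{K^2}{2}(s-s_*)}-1\bigr),
\end{equation*}
and since the right-hand side no longer depends on $y$ or $\sigma$, taking $\sup_{y\in B_{r_0},\,\sigma>0}$ gives the first claimed inequality. (I note that the excerpt writes $e^{\frac{K^2}{2}s_*}$ as the prefactor; I would double-check this — it should read $e^{\frac{K^2}{2}(s-s_*)}$ on both terms for the estimate as derived, or one uses $s\le 0$ to weaken it — but in any case both are bounded once we restrict $s_*$.)

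For the second, cleaner inequality \eqref{eq:entropy bound implies area bound}, I would simply use $s - s_* \le -s_* = |s_*|$ together with the choice of $s_*$ small depending only on $K$: requiring $\frac{K^2}{2}|s_*| \le \ln 2$ makes $e^{\frac{K^2}{2}(s-s_*)} \le 2$, so the first term is $\le 2\lambda(M_{s_*})$ and $\gamma(e^{\frac{K^2}{2}(s-s_*)} - 1) \le \gamma$; bounding the latter crudely by $2\gamma$ (or keeping $\gamma$ — the factor $2$ is just for a uniform clean statement) gives the result. I do not foresee a genuine obstacle here: the content is entirely in Lemma \ref{J mono}, and this corollary is just unwinding definitions and choosing $s_*$. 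The only points requiring care are the constant bookkeeping around $\gamma$ (whether the $2/K^2$ is absorbed) and making sure the entropy comparison step correctly uses that $\lambda$ is a sup over \emph{all} centers and scales, so that $F_{y,\bar\sigma-s_*}(M_{s_*}\cap B_{4r_0}) \le F_{y,\bar\sigma-s_*}(M_{s_*}) \le \lambda(M_{s_*})$.
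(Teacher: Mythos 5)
Your proposal is correct and follows essentially the same route as the paper: apply Lemma \ref{J mono} with $\bar\sigma=\sigma+s$, use the sandwich $F_{y,\sigma}(M\cap B_{3r_0})\le F^\psi_{y,\sigma}(M)\le F_{y,\sigma}(M\cap B_{4r_0})\le\lambda(M)$, take the supremum, and then choose $s_*$ small depending on $K$. Your observation that the prefactor should read $e^{\frac{K^2}{2}(s-s_*)}$ rather than $e^{\frac{K^2}{2}s_*}$ (and that the $2/K^2$ from $J$ must be absorbed into $\gamma$ or handled via $e^x-1\le xe^x$ with $|s_*|$ small) correctly identifies loose bookkeeping in the paper's statement and does not affect the final bound \eqref{eq:entropy bound implies area bound}.
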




\section{Uniqueness in the compact case}
\label{sec:compact}

In this section, we describe the proof of uniqueness for the compact case, Theorem \ref{thm:compact}. This will also illustrate the overall strategy and some main issues, which also need to be addressed in the non-compact setting. Throughout this section, we consider a RMCFf of closed submanifolds $\Sigma_t$ as in Section \ref{sec:localisation-rescaling}. 

We may assume that $r_0$ is small enough that the sphere $\pr B_{4r_0}$ is a barrier; that is, any closed MCFf that is initially inside $B_{4r_0}$ remains inside $B_{4r_0}$.

\subsection{Almost-monotonicity controls $\phi$}

Recall
\begin{align*}
F(\Sigma_t)=(4\pi)^{-n/2}\int_{\Sigma_t}e^{-\frac{|x|^2}4}=\int_{\Sigma_t}\rho.
\end{align*}
A straightforward calculation shows that $\phi=\mathbf{H}+\frac{x^\perp}{2}$ is precisely the $L^2$-gradient of $F$, in particular
\begin{align*}
\partial_t F(\Sigma_t)=&-\int_{\Sigma_t}\rho \langle\phi,\phi+e^{-t/2}\mathbf{G}^\perp\rangle. 
\end{align*}
We estimate 
$
e^{-t/2} |\phi||\mathbf{G}| \leq \frac{1}{4}|\phi|^2 + e^{-t}K^2.
$
Therefore,
\begin{align*}
\partial_t F(\Sigma_t)\le&-\int_{\Sigma_t}\rho\left(\frac34|\phi|^2-e^{-t}K^2\right)=K^2e^{-t}F(\Sigma_t)-\frac34\int_{\Sigma_t}\rho|\phi|^2. 
\end{align*}
Let $\mu(t)=e^{K^2e^{-t}}$ and define the modified functional
\begin{align*}
\tilde{F}(t):=\mu(t)F(\Sigma_t).
\end{align*}
Note that the modification only depends on $K$. The almost monotonicity becomes a genuine monotonicity for $\tilde{F}$; in particular,
\begin{align}
\label{tilde F monotonicity}
\partial_t\tilde{F}\leq -\frac34\mu(t)\int_{\Sigma_t}\rho|\phi|^2,
\end{align}
and hence
\begin{align}
\label{L2 estimate}
\int_{t_1}^{t_2}dt \int_{\Sigma_t}|\phi|^2\rho\le 2(\tilde F(t_1)-\tilde F(t_2)).
\end{align}

\subsection{{\L}ojasiewicz inequality and differential inequality}

Recall Schulze's {\L}ojasiewicz-Simon inequality \cite{schulze2014uniqueness} (also see \cite[Appendix A]{CMwand}): 

\begin{theorem}
\label{thm:compact-lojasiewicz}
If $\Gamma$ is a closed shrinker then there exists $C,\epsilon>0$, $\gamma\in(0,1)$ such that if $U$ is a normal vector field on $\Gamma$ with $\|U\|_{C^{2,\alpha}} \leq \epsilon$, then 
\begin{align*}
|F(\Gamma_U)-F(\Gamma)|^{1+\gamma} \leq C\|\phi_U\|_{L^2}^2.
\end{align*} 
\end{theorem}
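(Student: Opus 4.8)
The plan is to deduce the stated {\L}ojasiewicz inequality from Simon's infinite-dimensional {\L}ojasiewicz inequality, exactly as in Simon's original work on minimal surfaces and its adaptation to shrinkers by Schulze. The key observation is that the functional $F$ restricted to normal graphs over the closed shrinker $\Gamma$ is an analytic functional on a Banach space of vector fields, that $\Gamma$ is a critical point (since $\phi=0$ on $\Gamma$), and that the second variation (the stability/Jacobi operator of $F$ at $\Gamma$) is a self-adjoint elliptic operator with discrete spectrum and finite-dimensional kernel, because $\Gamma$ is compact. These are precisely the hypotheses of Simon's abstract {\L}ojasiewicz-Simon theorem, so the conclusion follows once the setup is verified.

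Concretely, I would proceed as follows. First, fix a tubular neighborhood of $\Gamma$ and identify normal graphs $\Gamma_U$ with small normal vector fields $U$, so that $F$ becomes a function $\mathcal{F}(U) = F(\Gamma_U)$ on a neighborhood of $0$ in $C^{2,\alpha}(\Gamma;N\Gamma)$. Second, check that $\mathcal{F}$ is real-analytic as a map from $C^{2,\alpha}$ to $\R$ — this follows because the area element and the Gaussian weight depend analytically (indeed smoothly, via composition of analytic functions) on $U$ and its first derivatives, and $\Gamma$ is compact so the integral is well-behaved. Third, identify the gradient: the $L^2(\rho)$-gradient of $\mathcal{F}$ at $U$ is precisely $\phi_U$ (up to the Jacobian factor converting between the measure on $\Gamma_U$ and on $\Gamma$, which is bounded above and below for small $U$), matching the gradient flow computation $\partial_t F = -\int \rho\langle\phi,\cdot\rangle$ already recorded in the excerpt. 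Fourth, compute the Hessian $L = D^2\mathcal{F}(0)$: this is the linearized shrinker operator (the drift-Laplacian-type stability operator $L = \Lap_\Gamma - \frac12\langle x,\nabla\cdot\rangle + |A|^2 + \tfrac12$ acting on normal vector fields, or its appropriate vector-bundle analogue in higher codimension), which is elliptic and self-adjoint on the weighted space $L^2(\Gamma,\rho)$; since $\Gamma$ is compact, $L$ has discrete spectrum, so its kernel is finite-dimensional and it is Fredholm. Fifth, invoke Simon's {\L}ojasiewicz-Simon inequality (in the Banach-space form, e.g. as in Simon \cite{simon1983asymptotics} or the exposition in \cite{schulze2014uniqueness}): there exist $C,\epsilon>0$ and $\gamma\in(0,1)$ with $|\mathcal{F}(U)-\mathcal{F}(0)|^{1+\gamma} \le C\|\nabla\mathcal{F}(U)\|_{L^2}^2$ for $\|U\|_{C^{2,\alpha}}\le\epsilon$. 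Finally, translate back: $\mathcal{F}(U)-\mathcal{F}(0) = F(\Gamma_U)-F(\Gamma)$ and $\|\nabla\mathcal{F}(U)\|_{L^2}$ is comparable to $\|\phi_U\|_{L^2}$ (again using that the Jacobian converting measures is bounded away from $0$ and $\infty$), which yields the claimed inequality after adjusting $C$.

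The main obstacle — or rather, the main thing requiring care — is the verification that the abstract functional-analytic hypotheses of Simon's theorem genuinely hold in this geometric setting, and in particular the analyticity of $\mathcal{F}$ and the identification of the gradient with $\phi_U$ including the measure-conversion Jacobian. Since $\Gamma$ is compact and smooth, the required ellipticity, self-adjointness, discreteness of spectrum, and Fredholmness of the Hessian are all standard, so compactness does most of the heavy lifting; the noncompact case (which is why Colding-Minicozzi needed entirely different methods) is not an issue here. In higher codimension one must be slightly careful that the linearized operator acts on sections of the normal bundle rather than functions, but it remains elliptic and self-adjoint with respect to the weighted inner product, so Simon's framework still applies verbatim.

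Since the statement is quoted directly from Schulze \cite{schulze2014uniqueness} (see also \cite[Appendix A]{CMwand}), I would in practice simply cite those references for the detailed verification of the hypotheses of the {\L}ojasiewicz-Simon theorem, and present the above only as a sketch of why it holds.
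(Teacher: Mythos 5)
Your proposal is correct and follows essentially the same route as the paper, which does not prove Theorem \ref{thm:compact-lojasiewicz} but quotes it directly from Schulze \cite{schulze2014uniqueness} (see also \cite[Appendix A]{CMwand}), whose proof is exactly the Simon--{\L}ojasiewicz argument you sketch (analyticity of the graph functional, identification of the $L^2(\rho)$-gradient with $\phi_U$ up to a controlled Jacobian, and the elliptic self-adjoint Hessian with discrete spectrum afforded by compactness of $\Gamma$). Your closing decision to cite those references for the detailed verification matches the paper's treatment.
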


From this we derive the following differential inequality:

\begin{theorem}\label{T:differential inequality compact}
Fix $n,N$. There exist $C_1, \epsilon>0$, $\gamma\in(0,1)$ and $t_0=t_0(K)$ such that if $\Sigma_t$ is a RMCFf which is $(C^{2,\alpha}, \epsilon)$-close to some closed shrinker $\Gamma$ for $t\in[t_1,t_2]$, $t_1\geq t_0$, then
\begin{align}
\label{eq:compact-L}
\pr_t \tilde{F} \leq -C_1 (\tilde{F}-F(\Gamma))^{1+\gamma} + C_1 e^{-(1+\gamma)t}.
\end{align}
\end{theorem}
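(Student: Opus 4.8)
The plan is to combine the monotonicity of $\tilde F$ from \eqref{tilde F monotonicity}, the $L^2$-to-oscillation {\L}ojasiewicz inequality of Theorem \ref{thm:compact-lojasiewicz}, and careful bookkeeping of the forcing error which enters both through the velocity $\tilde\phi = \phi + e^{-t/2}\mathbf{G}^\perp$ and through the discrepancy between $\tilde F$ and $F$. First I would use closeness to $\Gamma$: since $\Sigma_t$ is $(C^{2,\alpha},\epsilon)$-close to $\Gamma$, we may write $\Sigma_t = \Gamma_{U_t}$ with $\|U_t\|_{C^{2,\alpha}}\le\epsilon$, so Theorem \ref{thm:compact-lojasiewicz} gives $|F(\Sigma_t)-F(\Gamma)|^{1+\gamma}\le C\|\phi\|_{L^2}^2 = C\int_{\Sigma_t}|\phi|^2\rho$. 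The left side should be compared to $(\tilde F - F(\Gamma))^{1+\gamma}$: writing $\tilde F - F(\Gamma) = \mu(t)(F(\Sigma_t) - F(\Gamma)) + (\mu(t)-1)F(\Gamma)$, and noting $1\le\mu(t)\le\mu(t_0)$ is bounded and $0\le\mu(t)-1\le C_K e^{-t}$, one gets $(\tilde F - F(\Gamma))^{1+\gamma}\le C\big(|F(\Sigma_t)-F(\Gamma)|^{1+\gamma} + e^{-(1+\gamma)t}\big)$ (using $(a+b)^{1+\gamma}\lesssim a^{1+\gamma}+b^{1+\gamma}$ and the area bound on $\Sigma_t$, which is controlled via Corollary \ref{cor:area-bound} for $t_0$ large; for the closed case one simply uses that $F(\Sigma_t)$ is bounded by closeness to $\Gamma$). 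Hence $(\tilde F - F(\Gamma))^{1+\gamma}\le C\int_{\Sigma_t}|\phi|^2\rho + Ce^{-(1+\gamma)t}$.

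Next I would plug this into the monotonicity \eqref{tilde F monotonicity}: since $\mu(t)\ge 1$,
\begin{align*}
\partial_t\tilde F \le -\tfrac34\int_{\Sigma_t}|\phi|^2\rho \le -\tfrac{3}{4C}(\tilde F - F(\Gamma))^{1+\gamma} + \tfrac{3}{4}e^{-(1+\gamma)t},
\end{align*}
which is \eqref{eq:compact-L} with $C_1 = \max(\tfrac{3}{4C}, \tfrac34)$ (after possibly relabelling constants). The one subtlety is that $\tilde F - F(\Gamma)$ need not be nonnegative a priori, but this is harmless: if $\tilde F(t) - F(\Gamma) \le 0$ then by monotonicity $\tilde F$ is decreasing, so $\tilde F - F(\Gamma)$ stays $\le 0$, and the inequality $\partial_t\tilde F \le -C_1(\tilde F - F(\Gamma))^{1+\gamma} + C_1 e^{-(1+\gamma)t}$ holds trivially (interpreting $(\tilde F - F(\Gamma))^{1+\gamma}$ appropriately, or simply restricting to the regime $\tilde F > F(\Gamma)$ which is the only one that matters for the subsequent rate argument). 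Alternatively one notes $\tilde F(t)\ge F(\Sigma_t)\ge F_{\text{loc}}$ and that the tangent flow value forces $\tilde F(t)\to$ some limit $\ge F(\Gamma)$, so WLOG $\tilde F - F(\Gamma)\ge 0$ on the interval of interest.

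The main obstacle, and the reason $t_0=t_0(K)$ appears, is controlling the two distinct forcing errors and ensuring they both fit under a single $e^{-(1+\gamma)t}$ term. The velocity error in $\partial_t F(\Sigma_t) = -\int\rho\langle\phi,\phi + e^{-t/2}\mathbf G^\perp\rangle$ was already absorbed in deriving \eqref{tilde F monotonicity} by the Cauchy-Schwarz splitting $e^{-t/2}|\phi||\mathbf G|\le\tfrac14|\phi|^2 + e^{-t}K^2$; this produced the factor $\tfrac34$ and the functional modification $\mu(t)=e^{K^2 e^{-t}}$, and this is why only $\tfrac34$ (not all) of the $\int|\phi|^2\rho$ is available in the monotonicity, but that constant loss is immaterial. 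The genuinely new error is the $(\mu(t)-1)F(\Gamma)$ gap between $\tilde F$ and $F$; since $\mu(t)-1 \le C_K e^{-t}$, raising to power $1+\gamma$ (using $\gamma<1$, so $1+\gamma \le 2$ and the inequality $(x+y)^{1+\gamma}\le 2^\gamma(x^{1+\gamma}+y^{1+\gamma})$) yields a term bounded by $C e^{-(1+\gamma)t}$ provided $t\ge t_0(K)$ is large enough that $\mu(t)$ stays within, say, $[1,2]$. Choosing $\epsilon$ small so that the {\L}ojasiewicz hypothesis applies on all of $[t_1,t_2]$, and $t_0$ large depending only on $K$ for the $\mu$-estimates and the area bound of Corollary \ref{cor:area-bound}, completes the argument.
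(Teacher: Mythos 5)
Your argument is correct and is essentially the paper's proof: combine the monotonicity \eqref{tilde F monotonicity} with Theorem \ref{thm:compact-lojasiewicz}, decompose $\tilde{F}-F(\Gamma)$ via the triangle inequality into $\mu(t)(F(\Sigma_t)-F(\Gamma))$ plus the gap $(\mu(t)-1)F(\Gamma)$, and use $\mu(t)-1\simeq K^2e^{-t}$ to absorb the latter into the $e^{-(1+\gamma)t}$ term for $t\geq t_0(K)$. Your extra remarks on the sign of $\tilde F - F(\Gamma)$ and on merging the two constants into a single $C_1$ are harmless bookkeeping that the paper likewise leaves implicit.
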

\begin{proof}

Combining \eqref{tilde F monotonicity} and Theorem \ref{thm:compact-lojasiewicz} gives for large enough $t_0$
\begin{align*}
    \pr_t \tilde{F}  \leq -  2\|\phi\|_{L^2}^2 \leq - C |F(\Sigma_t) - F(\Gamma)|^{1+\gamma}.
\end{align*}
Now by the triangle inequality $|\tilde{F}(t)-F(\Gamma)| \leq |F(\Sigma_t)- F(\Gamma)| + (\mu(t)-1)F(\Gamma)$. It follows that 
\begin{align*} 
\pr_t \tilde{F} \leq - C_\gamma( (\tilde{F}(t)-F(\Gamma))^{1+\gamma}  - F(\Gamma)^{1+\gamma} (\mu(t)-1)^{1+\gamma}).
\end{align*}
Using that $\mu(t)-1 \simeq K^2 e^{-t}$ for large $t$ gives the result. 
\end{proof}

We may solve the differential inequality as follows: 

\begin{lemma}\label{L:differential inequality compact}
Let $f:[1,\infty) \to [0,\infty)$ be a smooth, non-increasing function.
Suppose there are $\alpha>0$, $K_0>0$ and $E(t)\geq 0$ so that for $t\geq 1$ we have $f'(t) \leq - K_0 f^{1+\gamma} - E(t)$. If $E(t) \in O(t^\frac{1+\gamma}{\gamma})$, then there exists $C$ depending only on $K_0,E, \gamma, f(1)$ so that $f(t) \leq Ct^{-1/\gamma}$. 
\end{lemma}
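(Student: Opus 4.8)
The plan is to treat this as a standard ODE-comparison argument, where the error term $E(t)$ is small enough to be absorbed at the cost of only worsening the constant, not the decay rate. First I would observe that the "model" equation $g' = -K_0 g^{1+\gamma}$ has solutions decaying exactly like $g(t) \simeq (K_0 \gamma t)^{-1/\gamma}$, so the claimed bound $f(t) \le C t^{-1/\gamma}$ is the sharp rate one expects. The strategy is to show that $f$ cannot stay much larger than this model solution for long: whenever $f(t)$ is large relative to $t^{-1/\gamma}$, the $-K_0 f^{1+\gamma}$ term dominates the (polynomially controlled) error $E(t)$, forcing rapid decay; and once $f$ drops to the model size it is trapped there by the same differential inequality.

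The key steps, in order, would be: (1) Fix a large constant $B$ (to be chosen depending on $K_0, \gamma$ and the implied constant in $E(t) \in O(t^{\frac{1+\gamma}{\gamma}})$) and consider the candidate barrier $h(t) = B t^{-1/\gamma}$. Note $h'(t) = -\frac{B}{\gamma} t^{-1/\gamma - 1}$ while $-K_0 h^{1+\gamma} = -K_0 B^{1+\gamma} t^{-1/\gamma - 1}$, so for $B$ large the super-solution inequality $h' \ge -K_0 h^{1+\gamma} - E(t)$ holds provided also $E(t) \le C_E t^{-1/\gamma - 1}$ — wait, here I should be careful: the hypothesis says $E(t) \in O(t^{\frac{1+\gamma}{\gamma}})$, which I should double-check is consistent; rather than over-rely on the exact exponent I would simply use that $E(t)$ grows at most polynomially and is dominated by $f^{1+\gamma}$ in the regime where $f \gtrsim t^{-1/\gamma}$. (2) Run a continuity/contradiction argument: suppose $f(t_0) > h(t_0)$ for some $t_0$; since $f$ is non-increasing and $h \to 0$ this could persist, but compare the two at the first time closeness is violated and use $f' \le -K_0 f^{1+\gamma} - E \le h'$ there to get a contradiction with the ordering. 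The cleanest implementation is: if at some time $f$ exceeds the barrier, then on the set where it does, $f' \le -K_0 f^{1+\gamma} - E(t) \le -\tfrac12 K_0 f^{1+\gamma}$ (using that the barrier size makes the $K_0 f^{1+\gamma}$ term beat $E$), and integrating this Bernoulli-type inequality from $1$ gives $f(t) \le (f(1)^{-\gamma} + \tfrac12 K_0 \gamma (t-1))^{-1/\gamma} \le C' t^{-1/\gamma}$, which is the desired conclusion (after possibly enlarging $C$ to also cover the bounded initial segment and the regime where $f$ is below the barrier). (3) Handle the remaining regime where $f$ is already $\le h$: there $f$ stays $\le h + (\text{correction from } E)$, and since $f$ is non-increasing one concludes $f(t) \le \max(h(t), \sup_{s \le t, f(s) > h(s)} f(s))$ is controlled. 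Assemble these into the single bound $f(t) \le C t^{-1/\gamma}$ with $C = C(K_0, E, \gamma, f(1))$.

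The main obstacle I anticipate is bookkeeping around the exact growth exponent of $E(t)$ and making sure the absorption $E(t) \le \tfrac12 K_0 f^{1+\gamma}$ is valid precisely in the regime where we need it. The delicate point is that this absorption only works when $f(t) \gtrsim (E(t)/K_0)^{1/(1+\gamma)}$; if $E(t) \in O(t^{\frac{1+\gamma}{\gamma}})$ then $(E(t)/K_0)^{1/(1+\gamma)} \in O(t^{1/\gamma})$, which is \emph{growing}, so naively the absorption fails — this suggests the intended reading is either that $E$ decays (e.g. the application has $E(t) = C_1 e^{-(1+\gamma)t}$, which is $O(t^{-\frac{1+\gamma}{\gamma}})$ up to constants and certainly $O$ of any negative power), or that there is a sign/exponent convention I must reconcile. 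In the application from Theorem \ref{T:differential inequality compact} the error is exponentially small, so $E(t) \le C t^{-\frac{1+\gamma}{\gamma}}$ holds trivially for large $t$; I would state the lemma's proof so that what is really used is $E(t) \le C_E t^{-\frac{1+\gamma}{\gamma}}$ (equivalently $E(t)\,t^{\frac{1+\gamma}{\gamma}}$ bounded), under which the barrier argument in step (1) closes cleanly, since then $-K_0 h^{1+\gamma} - E(t)$ and $h'$ are both of order $t^{-1/\gamma-1} = t^{-\frac{1+\gamma}{\gamma}}$ and a large enough $B$ wins. Modulo this exponent reconciliation, the rest is a routine Bernoulli-inequality integration.
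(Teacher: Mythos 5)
Your proposal is correct and is essentially the paper's own argument: a first-crossing comparison against the barrier $Ct^{-1/\gamma}$, deriving a contradiction at the first touching time by choosing $C$ so large that $K_0C^{1+\gamma}$ beats $C/\gamma + C_E$ (your additional Bernoulli-integration remarks are redundant to this). Your reconciliation of the hypothesis on $E$ is also exactly what the paper's proof implicitly uses: the error must be read as entering with a plus sign and satisfying $E(t)\le C_E\, t^{-\frac{1+\gamma}{\gamma}}$ (the positive exponent in the statement is a typo), consistent with the application where $E(t)=C_1 e^{-(1+\gamma)t}$.
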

\begin{proof}
Let $h(t) = f(t) - Ct^{-1/\gamma}$ where $C$ will be chosen later, but is large enough so that $h(1) <0$. Suppose $h$ is not strictly negative. Then there must be a first time $T>1$ at which $h(T)=0$. Then $h'(T)\geq 0$. On the other hand we have \begin{align*}
h'(T) \le-K_0 f(T)^{1+\gamma} + (C_E+C/\gamma) T^{-\frac{1+\gamma}{\gamma}} = - K_0 C^{1+\gamma} T^{-\frac{1+\gamma}{\gamma}} + (C_E+C/\gamma) T^{-\frac{1+\gamma}{\gamma}} .
\end{align*}
This is a contradiction if $C$ is chosen so large that $K_0 C^{1+\gamma} > C_E+C/\gamma$. 
\end{proof}

We may then use the solution and the monotonicity of $\tilde{F}$ to estimate the distance between time slices of a RMCFf which is close enough to $\Gamma$: 

\begin{theorem}\label{3.4}
Let $\Gamma$ be a closed shrinker. Suppose $\Sigma_t$ is a RMCFf and that we can write $\Sigma_t$ as a normal graph $U(t)$, $t\in[t_0,T]$, $t_0\geq 1$, over $\Gamma$ with $\|U(\cdot, t)\|_{C^{2,\alpha}}\le \sigma_0$, and $\tilde{F}(t)\geq F(\Gamma)$ for all $t\in[t_1,t_2]$. Then there exist constants $C_0 >0$, depending only on $\sigma_0,K$ and $\Gamma$, $\lambda_0$, and $\rho>0$ depending only on $\Gamma$ such that
\begin{align*}
\sup_{t_0\leq t_1\leq t_2 \leq T}\|U(t_2)-U(t_1)\|_{L^2}\le C_0 t_1^{-\rho}
\end{align*}
\end{theorem}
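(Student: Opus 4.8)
The plan is to control the $L^2$-distance between time slices by integrating the velocity, and then to bound the integrated velocity using the decay rate coming from the differential inequality of Theorem \ref{T:differential inequality compact}. The velocity of the rescaled flow is $\tilde\phi = \phi + e^{-t/2}\mathbf{G}^\perp$, and for a flow written as a normal graph $U(t)$ over $\Gamma$ we have (up to the usual tangential reparametrisation, which does not affect the normal displacement) $\partial_t U = \tilde\phi$ in an appropriate sense. Hence
\begin{align*}
\|U(t_2) - U(t_1)\|_{L^2} \leq \int_{t_1}^{t_2} \|\partial_t U(t)\|_{L^2}\, dt \leq \int_{t_1}^{t_2} \|\phi\|_{L^2}\, dt + \int_{t_1}^{t_2} e^{-t/2}\|\mathbf{G}^\perp\|_{L^2}\, dt.
\end{align*}
The second term is harmless: since $\|\mathbf{G}\|_{C^0}\leq K$ and the area of $\Sigma_t \cap B_{4r_0}$ is uniformly bounded (Corollary \ref{cor:area-bound}), it is bounded by $C K e^{-t_1/2}$, which is far smaller than $t_1^{-\rho}$ for the exponents that will appear.

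The first step in detail: I would first invoke Theorem \ref{T:differential inequality compact} to get $\partial_t \tilde F \leq -C_1(\tilde F - F(\Gamma))^{1+\gamma} + C_1 e^{-(1+\gamma)t}$ on $[t_1,t_2]$ (shrinking $\epsilon$ and enlarging $t_0$ as needed so $\sigma_0 \leq \epsilon$ and $t_0 \geq t_0(K)$). Setting $f(t) = \tilde F(t) - F(\Gamma) \geq 0$, this is exactly the hypothesis of Lemma \ref{L:differential inequality compact} with error $E(t) = C_1 e^{-(1+\gamma)t} = O(t^{(1+\gamma)/\gamma})$ trivially (exponential beats polynomial). Hence $f(t) \leq C t^{-1/\gamma}$, i.e. $\tilde F(t) - F(\Gamma) \leq C t^{-1/\gamma}$ for $t \in [t_1, t_2]$, with $C$ depending only on $\sigma_0, K, \Gamma$ and the initial value $\tilde F(t_0)$ — and the latter is controlled by $\lambda(M_{s_*}) =: \lambda_0$ via the area bounds of Section \ref{sec:almost-monotonicity}.

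The second step is the standard Schulze-type interpolation converting the $L^2$-norm of $\phi$ (in spacetime) into a bound on $\int \|\phi\|_{L^2}\,dt$ via the decay of $\tilde F$. From \eqref{L2 estimate} we have $\int_{t_1}^{t_2}\|\phi\|_{L^2}^2\,dt \leq 2(\tilde F(t_1) - \tilde F(t_2)) \leq 2 f(t_1)$. To pass from the $L^2$-in-time bound to the $L^1$-in-time bound we use the dyadic decomposition trick: on each interval $[2^j, 2^{j+1}]$, Cauchy-Schwarz gives $\int \|\phi\|_{L^2}\,dt \leq (2^j)^{1/2}\big(\int \|\phi\|_{L^2}^2\,dt\big)^{1/2} \leq (2^j)^{1/2}(2 f(2^j))^{1/2} \leq C (2^j)^{1/2}(2^j)^{-1/(2\gamma)} = C (2^j)^{(\gamma-1)/(2\gamma)}$. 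Summing the geometric series in $j$ over dyadic blocks covering $[t_1, t_2]$ (which converges precisely because $(\gamma-1)/(2\gamma) < 0$) yields $\int_{t_1}^{t_2}\|\phi\|_{L^2}\,dt \leq C t_1^{(\gamma - 1)/(2\gamma)}$. Setting $\rho = \tfrac{1-\gamma}{2\gamma} > 0$ and combining with the forcing estimate above gives $\|U(t_2) - U(t_1)\|_{L^2} \leq C_0 t_1^{-\rho}$, as claimed.

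\textbf{Main obstacle.} The analytic core — the differential inequality and the ODE comparison — is already supplied by Theorems \ref{T:differential inequality compact}, \ref{thm:compact-lojasiewicz} and Lemma \ref{L:differential inequality compact}, so the genuine work here is bookkeeping: (i) justifying that the normal-graph parametrisation can be chosen so that $\|\partial_t U\|_{L^2}$ is comparable to $\|\tilde\phi\|_{L^2}$ (i.e. controlling the tangential reparametrisation term, which requires $\sigma_0$ small and uses that $\Gamma$ is a smooth fixed compact shrinker so all geometric quantities are bounded); and (ii) tracking the dependence of all constants so that they depend only on $\sigma_0$, $K$, $\Gamma$ and $\lambda_0$ and not on $T$ or $t_2$ — in particular ensuring the constant $C$ in $f(t)\leq Ct^{-1/\gamma}$ is uniform, which is where the global area bound $\lambda_0$ enters to bound $f(t_0)$. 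Neither is deep, but (i) is the step most likely to need care, since $\partial_t U = \tilde\phi$ only after subtracting a tangential vector field, and one must check this correction is lower-order in $L^2$ on the fixed compact base $\Gamma$.
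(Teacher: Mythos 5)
Your proposal is correct and follows the same skeleton as the paper's proof: bound $\|U(t_2)-U(t_1)\|_{L^2}$ by $\int\|\pr_t U\|_{L^2}\,dt\le C\int\|\phi\|_{L^2}\,dt + Ce^{-t_1/2}$, invoke Theorem \ref{T:differential inequality compact} together with Lemma \ref{L:differential inequality compact} to get $f(t)=\tilde F(t)-F(\Gamma)\le Ct^{-1/\gamma}$, and then convert the $L^2$-in-time control of $\phi$ coming from \eqref{tilde F monotonicity} into an $L^1$-in-time bound. The one place you diverge is that conversion step: the paper uses Cauchy--Schwarz against the weight $t^{1+\delta}$ and then integrates $-\pr_t\tilde F\, t^{1+\delta}$ by parts, feeding in the decay of $f$, whereas you decompose $[t_1,t_2]$ into dyadic blocks, apply Cauchy--Schwarz and the monotonicity \eqref{L2 estimate} blockwise, and sum the geometric series. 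Both devices are standard and yield the same rate $\rho=\tfrac{1-\gamma}{2\gamma}$ (what the paper's computation actually produces, despite the slightly different bookkeeping in its statement of $\rho$), so the difference is cosmetic rather than structural; if anything your version avoids the integration by parts at the cost of a covering argument. Two small points of care: anchor the dyadic blocks at $t_1$ (i.e.\ use $[2^jt_1,2^{j+1}t_1]$ truncated at $t_2$) rather than at absolute powers of $2$, so that every left endpoint lies in $[t_0,T]$ where both the graphicality (hence the differential inequality) and the decay $f\le Ct^{-1/\gamma}$ are available, and note that the blockwise bound $\int\|\phi\|_{L^2}^2\le 2f(\text{left endpoint})$ uses the hypothesis $\tilde F\ge F(\Gamma)$; your remarks on the tangential reparametrisation and on the uniformity of constants via $\lambda_0$ match what the paper asserts.
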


\begin{proof}
By closeness to $\Gamma$, it follows from the RMCFf equation that 
$$
\|\pr_t U\|_{L^2}\le C\|\phi + e^{-t/2}\mathbf{G}^\perp\|_{L^2}
$$
for some constant $C=C(\sigma_0)$.
Since, $\|\phi + e^{-t/2}\mathbf{G}^\perp\|_{L^2} \leq \|\phi\|_{L^2} + K\lambda_0 e^{-t/2}$, and the monotonicity for $\tilde F$, we have for any $\delta>0$
\begin{align*}
\begin{split}
 \int_{t_1}^{t_2} \|\pr_t U\|_{L^2} dt &\leq C\int_{t_1}^{t_2} \|\phi\|_{L^2} dt + Ce^{-t_1/2} \\&\leq C\left(\int_{t_1}^{t_2} \|\phi\|_{L^2}^2 t^{1+\delta}dt \right)^\frac{1}{2} \left(\int_{t_1}^{t_2} t^{-1-\delta} dt\right)^\frac{1}{2} + Ce^{-t_1/2}  
 \\\nonumber&\leq C\left(\int_{t_1}^{t_2} -(\pr_t \tilde{F}) t^{1+\delta} dt\right)^\frac{1}{2} \left(t_1^{-\delta} - t_2^{-\delta}\right)^\frac{1}{2} + Ce^{-t_1/2},
 \end{split}
\end{align*}
where we have used H\"{o}lder's inequality in the second line, and (\ref{tilde F monotonicity}) for the third. 

Now let $f(t) = \tilde{F}(t)-F(\Gamma)$ so that $\pr_t f= \pr_t \tilde{F} \leq 0$. 
Integrating by parts, we have \[\int_{t_1}^{t_2} -(\pr_t \tilde{F}) t^{1+\delta} dt =f(t_1)t_1^{1+\delta}-f(t_2)t_2^{1+\delta} +(1+\delta)\int_{t_1}^{t_2} f t^\delta dt.\] 
By the differential inequality, Theorem \ref{T:differential inequality compact} and Lemma \ref{L:differential inequality compact}, we have $f(t) \leq Ct^{-1/\gamma}$, where $C$ depends on $f(t_0)$. 
But then $\int_{t_1}^{t_2} -(\pr_t \tilde{F}) t^{1+\delta}dt \leq C t_1^{1+\delta -1/\gamma} + C\int_{t_1}^{t_2} t^{-1/\gamma +\delta} \leq C  t_1^{1+\delta -1/\gamma} . $ 
 Choosing $\delta$ so that $\rho := 1/\gamma-1-\delta>0$ completes the proof.
\end{proof}

\subsection{Extension of graph representation}

In order to apply the {\L}ojasiewicz inequality, we need to ensure that we are close to a model shrinker at all sufficiently large times. In the compact setting, we have the following lemma, which states that if we are initially close to a closed shrinker, then we remain close to it. 
We denote by $\Theta_{(0,0)}$ the Gaussian density at the spacetime point $(0,0)$, i.e.
\begin{align*}
\Theta_{(0,0)}(M_s)=\lim_{s\searrow 0}\int_{M_s}\rho_{0,s}.
\end{align*}

\begin{lemma}\label{Schulze2.2}
Let $\beta>1$ and $\Gamma$ be a shrinker.
For every $\sigma>0$ there exist $\epsilon_0>0$ and $\tau_0<0$ depending only on $\sigma,\beta,\Gamma,K$ such that if $M_s$ is a unit density K-almost Brakke flow with $\Theta_{(0,0)}(M)\ge F(\Gamma)$ and $\frac{1}{\sqrt{-s}}M_s$ is a smooth graph over $\Gamma$ of a normal vector field $U$ for $s\in [\beta\tau,\tau]$, where $\tau_0 \leq \tau <0$, 
\begin{align}
\label{eq:graph2.2}
\|U\|_{C^{2,\alpha}(\Gamma\times[\beta\tau,\tau])}\le \sigma
\end{align}
and 
\begin{align}
\label{eq:graph2.2a}
\sup_{s\in[\beta\tau,\tau]}\|U(\cdot,s)\|_{L^2(\Gamma)}\le \epsilon_0 ,
\end{align}
then $\frac{1}{\sqrt{-s}}M_s$ is the graph of an extended $U$ for $s\in [\beta\tau,\tau/\beta]$, with 
\begin{align}
\label{eq:graph2.2ext}
\|U\|_{C^{2,\alpha}(\Gamma\times[\beta\tau,\tau/\beta])}\le \sigma.
\end{align}
\end{lemma}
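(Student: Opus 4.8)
The statement is a forced-flow analogue of Schulze's Lemma 2.2. The plan is to run a continuity/open-closed argument in the ``extension time'' $\tau' \in [\tau, \tau/\beta]$, showing that the set of times up to which a graphical representation with $C^{2,\alpha}$-norm $\le \sigma$ persists is both open and closed, and nonempty. Openness is immediate from smoothness of the flow (a graphical bound strictly below $\sigma$ persists on a short time interval by continuity of the flow and parabolic estimates). The real content is the closedness: we must prevent the $C^{2,\alpha}$-norm of $U$ from reaching $\sigma$, and this is where the almost-monotonicity and the $L^2$-smallness hypothesis \eqref{eq:graph2.2a} come in.

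First I would set up the rescaled flow $\Sigma_t = \tfrac{1}{\sqrt{-s}}M_s$ with $t = -\ln(-s)$, so the hypotheses say $\Sigma_t$ is a graph of $U(\cdot,t)$ over $\Gamma$ with $\|U\|_{C^{2,\alpha}} \le \sigma$ and $\|U(\cdot,t)\|_{L^2} \le \epsilon_0$ on the initial interval, and we wish to extend to a longer $t$-interval. The strategy to control the $L^2$-norm forward in time is: (i) use the $L^2$-estimate \eqref{L2 estimate} (equivalently the near-monotonicity \eqref{tilde F monotonicity} of $\tilde F$), together with $\Theta_{(0,0)}(M) \ge F(\Gamma)$, to bound $\int_{t_1}^{t_2}\!\!\int_{\Sigma_t}|\phi|^2\rho$ by $2(\tilde F(t_1) - F(\Gamma)) + $ (exponentially small error); (ii) observe that since $\Sigma_{t_1}$ is $\epsilon_0$-$L^2$-close and $\sigma$-$C^{2,\alpha}$-close to $\Gamma$, interpolation plus smoothness of $F$ at $\Gamma$ gives $\tilde F(t_1) - F(\Gamma) \le \omega(\epsilon_0,\sigma) + Ce^{-t_1}$ with $\omega \to 0$; (iii) integrate the RMCFf equation $\partial_t U = \phi + e^{-t/2}\mathbf{G}^\perp$ (using that the nonlinear graph operator is comparable to $\phi_U$ while $\|U\|_{C^{2,\alpha}}$ stays bounded) to get $\|U(\cdot,t) - U(\cdot,t_1)\|_{L^2} \le \int_{t_1}^t \|\phi\|_{L^2} + Ke^{-t_1/2} \lesssim (\text{length})^{1/2}\big(\int |\phi|^2\rho\big)^{1/2} + Ke^{-t_1/2}$, which is small once $\epsilon_0$ is small, $\tau_0$ is sufficiently negative, and the total extension length is controlled (it is: $\beta$ is fixed, so the $t$-interval has fixed finite length).

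Thus the $L^2$-norm of $U$ stays below, say, $2\epsilon_0$ on the whole extended interval. To upgrade this to the $C^{2,\alpha}$-bound $\le \sigma$ needed to close the argument, I would invoke interior parabolic Schauder/curvature estimates for the (forced) rescaled flow — these are available since $\|\mathbf{F}\|_{C^k}\le K$ and $M_s$ is a $K$-almost Brakke flow, and the relevant interior estimates in the spirit of Ecker--Huisken are exactly what Appendix \ref{sec:interior-estimates} provides. Concretely: on a slightly earlier time sub-interval the flow is smooth with bounded geometry (it's a $\sigma$-graph there), so interior estimates propagate $C^{k}$ bounds forward; combined with the smallness of $\|U\|_{L^2}$ and an interpolation inequality $\|U\|_{C^{2,\alpha}} \lesssim \|U\|_{L^2}^{\theta}\|U\|_{C^{k}}^{1-\theta}$, we conclude $\|U\|_{C^{2,\alpha}} \le \sigma/2 < \sigma$ for $\epsilon_0$ small and $\tau_0$ negative enough. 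This strict inequality is what makes the ``bad set'' of times closed-and-open, hence all of $[\tau,\tau/\beta]$, finishing the proof.

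The main obstacle I anticipate is step (i)--(iii): controlling the $L^2$-displacement of $U$ using only a one-sided density bound $\Theta_{(0,0)}(M)\ge F(\Gamma)$ rather than genuine monotonicity of $F$. Because of the forcing term, $F$ is not monotone, so one must work with $\tilde F$ and carefully track the exponential error terms $e^{-t}$, $e^{-t/2}$, and ensure that choosing $\tau_0$ sufficiently negative (equivalently $t_1$ sufficiently large) absorbs all of them. The bound $\tilde F(t_1) - F(\Gamma) \le \omega(\epsilon_0) + Ce^{-t_1}$ also requires knowing $\tilde F$ doesn't \emph{exceed} $F(\Gamma)$ by more than a controlled amount at the initial time — this is exactly the role of the hypothesis $\Theta_{(0,0)}(M) \ge F(\Gamma)$ combined with monotonicity: $\lim_{t\to\infty}\tilde F(t) \ge \Theta$ can't happen (it would force convergence to a shrinker of density $> F(\Gamma)$, contradicting unit density / closeness to $\Gamma$), so $\tilde F(t_1)$ is trapped near $F(\Gamma)$. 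Making this trapping quantitative, uniformly in the flow, is the delicate point, but it parallels Schulze's argument with the monotone functional $\tilde F$ replacing $F$.
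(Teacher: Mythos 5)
Your route is genuinely different from the paper's. The paper proves this lemma softly, by contradiction and compactness: assuming failure, one takes a sequence of $K$-almost Brakke flows with $\tau_k\nearrow 0$ and $\epsilon_0=1/k$, parabolically rescales so each is defined on $[-\beta,-1]$, invokes White's compactness theorem for almost Brakke flows, identifies the limit with the static shrinker flow $\sqrt{-s}\,\Gamma$ on $(-\beta,0)$ using the $L^2\to 0$ hypothesis, Huisken's monotonicity and the density bound $\Theta_{(0,0)}\ge F(\Gamma)$, and then gets smooth convergence on compact subsets of $(-\beta,0)$ from White's Brakke-regularity theorem, contradicting the assumed failure on $[\tau_k,\tau_k/\beta]$. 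Your proposal instead rebuilds a quantitative, direct extension argument (trapping $\tilde F$ near $F(\Gamma)$, small $\int\!\!\int|\phi|^2\rho$, small $L^2$ motion, interpolation back up to $C^{2,\alpha}$); this is exactly the kind of machinery the paper reserves for the noncompact cylindrical case (Section \ref{sec:extension}), and the quantitative core of your steps (i)--(iii) is sound and parallels Theorem \ref{3.4}.

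The genuine gap is your openness step. You assert that ``openness is immediate from smoothness of the flow,'' but $M_s$ is only assumed to be a unit-density $K$-almost Brakke flow: it is smooth (as a graph) only on $[\beta\tau,\tau]$, and a Brakke-type flow need not continue smoothly past $\tau$ -- Brakke's inequality permits instantaneous mass drop, so the flow could simply cease to be a graph (or cease to exist) at $\tau^+$ without violating anything you have established pointwise. Your use of $\Theta_{(0,0)}\ge F(\Gamma)$ via the $\tilde F$-trapping only bounds the total possible mass loss by $\omega(\epsilon_0)$; it does not by itself give the local Gaussian density ratios close to $1$ that an $\epsilon$-regularity theorem (White's local regularity for almost Brakke flows) needs in order to conclude that the flow stays smooth for a short time beyond $\tau$. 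Producing those local density estimates forward in time is nontrivial (it is the content of Lemma \ref{L:density estimates back in time} and Proposition \ref{thm:curvature} in the cylindrical case), and the Appendix \ref{sec:interior-estimates} interior estimates cannot substitute for it, since they presuppose smoothness of the flow on the region in question. So as written your continuity argument only proves a weaker statement for flows assumed smooth on all of $[\beta\tau,\tau/\beta]$; to prove the lemma as stated you must either insert the $\epsilon$-regularity/density-propagation step or fall back on the compactness argument. Two minor points: the quantifier on $\tau_0$ goes the other way ($\tau$ must be close to $0$, i.e.\ $|\tau_0|$ small, which is what ``$t_1$ large'' means -- not ``$\tau_0$ sufficiently negative''), and applying the unrescaled interior estimates near $s=0$ requires working at parabolic scale $\sqrt{-s}$, since the unrescaled curvature bound degenerates like $(-s)^{-1/2}$.
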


\begin{proof}
This is essentially Schulze's Lemma 2.2 in \cite{schulze2014uniqueness} and the proof goes through without major changes. 
For the convenience of the reader we provide a brief sketch nonetheless:

Let $M_s^\Gamma = \sqrt{-s} \Gamma$ be the unforced MCF induced by $\Gamma$. 
Assuming the result does not hold, we find a sequence of $K$-almost Brakke flows $M^k_s$ and $\tau_k \nearrow 0$, satisfying the assumption (\ref{eq:graph2.2}) with 
\begin{align}
\label{eq:contradiction}
\sup_{s\in[\beta\tau_k,\tau_k]}\|U(\cdot,s)\|_{L^2(\Gamma)}\le\frac1k,
\end{align}
but where 
 $\frac{1}{\sqrt{-s}}M_s^k$ is not a smooth graph over $\Gamma $ for $s\in[\tau_k,\tau_k/\beta]$ satisfying \eqref{eq:graph2.2ext}.
Let $\widetilde{M}^k_s$ be the parabolic rescaling of $M^k_s$ so that each is defined on $[-\beta,-1]$, i.e. $\widetilde M^k_s=|\tau_k|^2M^k_{|\tau_k|s}$. 
Then each $\widetilde{M}^k_s$ is a $|\tau_k|K$-almost Brakke flow. 
By the compactness theorem for almost Brakke flows (cf. \cite[Section 11]{white1997stratification}), and a diagonal argument, $\widetilde{M}_s^k$ converges to an unforced Brakke flow. 
It follows from (\ref{eq:contradiction}) and the monotonicity formula that the limit coincides with $M_s^\Gamma$ for $s\in(-\beta,0)$. The convergence is smooth on any compact subset of this interval by White's version of Brakke's regularity theorem (for almost Brakke flows) \cite{white2005local}, which gives the desired contradiction.
\end{proof}

\subsection{Uniqueness}

\begin{theorem}
Let $M_s^n$ be an embedded MCF with forcing in $\mathcal{U} \subset \mathbb{R}^N$. If one tangent flow at at a singular point is induced by a smooth closed shrinker $\Gamma$ with multiplicity 1, then the tangent flow at that point is unique. That is, any other tangent flow is also induced by $\Gamma$ (with multiplicity 1). 
\end{theorem}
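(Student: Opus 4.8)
\emph{Overall plan.} The plan is to carry out Schulze's ``direct'' scheme \cite{schulze2014uniqueness}: show that once the rescaled flow $\Sigma_t$ is $(C^{2,\alpha},\epsilon)$-close to $\Gamma$ on a single time window of length $\ln\beta$, it stays $(C^{2,\alpha},\epsilon)$-close to $\Gamma$ for all later times, with an $L^2$-summable drift; this forces $\Sigma_t\to\Gamma$ and hence uniqueness of the tangent flow. After a spacetime translation we may assume the singular point is $(0,0)$, so $M_s$ is a unit-density $K$-almost Brakke flow in $B_{4r_0}$, and since its tangent flow there is the multiplicity-one shrinker $\Gamma$ we have $\Theta_{(0,0)}(M)=F(\Gamma)$. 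Because the forcing decays under parabolic rescaling, the rescalings of $M$ converge, along a subsequence, to the static flow induced by $\Gamma$, smoothly by White's local regularity \cite{white2005local}; equivalently, there are $t_k\to\infty$ and $\delta_k\to 0$ with $\Sigma_t$ being $(C^{2,\alpha},\delta_k)$-close to $\Gamma$ for $t$ in the window $[t_k-L,t_k+L]$, where $L$ is any preassigned constant. Finally, by \eqref{tilde F monotonicity} the modified functional $\tilde F$ is non-increasing, and $\tilde F(t_k)=\mu(t_k)F(\Sigma_{t_k})\to F(\Gamma)$, so $\tilde F(t)\geq F(\Gamma)$ for all $t$ --- exactly the hypothesis of Theorem \ref{3.4}.

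\emph{The bootstrap.} Fix $\beta>1$ and let $\epsilon>0$ be small enough for Theorems \ref{thm:compact-lojasiewicz}, \ref{T:differential inequality compact} and \ref{3.4}; let $\epsilon_0$ be the $L^2$-threshold of Lemma \ref{Schulze2.2} for $\sigma=\epsilon$ and this $\beta$, let $C_0,\rho$ be the constants of Theorem \ref{3.4} with $\sigma_0=\epsilon$, and take $L\geq\ln\beta$. Choose $k$ so large that $T_0:=t_k-L\geq t_0$, $\delta_k<\min(\epsilon,\epsilon_0)$ and $\delta_k+C_0T_0^{-\rho}<\epsilon_0$. We claim $\Sigma_t$ is $(C^{2,\alpha},\epsilon)$-close to $\Gamma$, with graph function $U(t)$ obeying $\|U(t)\|_{L^2}<\epsilon_0$, for every $t\geq T_0$. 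Let $T^*\in(T_0,\infty]$ be the supremum of the $T$ for which this holds on $[T_0,T]$; from the starting window, $T^*\geq t_k+L$. If $T^*<\infty$: on $[T_0,T^*]$ the flow is a smooth RMCFf which is $(C^{2,\alpha},\epsilon)$-close to $\Gamma$, so Theorem \ref{3.4} gives $\|U(t)-U(T_0)\|_{L^2}\leq C_0T_0^{-\rho}$ and hence $\|U(t)\|_{L^2}\leq\delta_k+C_0T_0^{-\rho}<\epsilon_0$ there. Applying Lemma \ref{Schulze2.2} on the $s$-interval corresponding to the rescaled window $[T^*-\ln\beta,T^*]\subset[T_0,T^*]$ extends the graph representation, still with $\|U\|_{C^{2,\alpha}}\leq\epsilon$, to rescaled time $T^*+\ln\beta$; one more application of Theorem \ref{3.4} on $[T_0,T^*+\ln\beta]$ yields $\|U(t)\|_{L^2}<\epsilon_0$ there as well, contradicting the definition of $T^*$. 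Hence $T^*=\infty$.

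\emph{Conclusion and main difficulty.} With closeness for all $t\geq T_0$, Theorem \ref{3.4} (letting $T\to\infty$) gives $\|U(t_2)-U(t_1)\|_{L^2}\leq C_0t_1^{-\rho}$ for all $T_0\leq t_1\leq t_2$, so $U(t)$ is Cauchy in $L^2$ and converges to some $U_\infty$; since $U(t_k)\to 0$ in $C^{2,\alpha}$ we get $U_\infty=0$, i.e.\ $\Sigma_t\to\Gamma$ in $L^2$, and interpolating with the uniform $C^{2,\alpha}$-bound gives $\Sigma_t\to\Gamma$ in $C^2$. Thus every subsequential limit of the rescaled flow as $t\to\infty$ is the static multiplicity-one flow induced by $\Gamma$, which proves the theorem. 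The main obstacle is the bootstrap step: one has to close the loop between the $L^2$-smallness needed to run the extension Lemma \ref{Schulze2.2} and the $(C^{2,\alpha},\epsilon)$-closeness it returns, using that the {\L}ojasiewicz-driven drift estimate of Theorem \ref{3.4} --- available only on windows where $\Sigma_t$ is already close to $\Gamma$ --- keeps the $L^2$-drift below $\epsilon_0$. The forcing and the loss of exact monotonicity of $F$, which are what complicate our setting, have already been absorbed into $\tilde F$ and into the error terms of Theorems \ref{T:differential inequality compact} and \ref{3.4} (the forcing contributes an $O(e^{-t/2})$ term, dominated by $C_0t^{-\rho}$).
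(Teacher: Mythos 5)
Your proposal follows essentially the same route as the paper: use the given cylindrical—here compact—tangent flow to get initial $(C^{2,\alpha},\epsilon)$-closeness on a time window, use monotonicity of $\tilde F$ (so $\tilde F\geq F(\Gamma)$ and the constant in Theorem \ref{3.4} is uniform), then bootstrap by alternating the {\L}ojasiewicz-driven drift estimate of Theorem \ref{3.4} with the extension Lemma \ref{Schulze2.2} to keep the flow graphical for all later times, and conclude uniqueness from the decaying $L^2$-drift. Your open-closed argument in place of the paper's induction on $T$, and your explicit verification that $\tilde F\geq F(\Gamma)$ and that the limit graph is $U_\infty=0$, are only cosmetic differences, so the proof is correct and matches the paper's.
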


\begin{proof}
We may assume without loss of generality that the singular point is $(0,0)$. Let $\Sigma_t$ be the corresponding RMCFf as in Section \ref{s:preliminaries}. 

By the convergence to $\Gamma$, we have:
\begin{itemize}
\item[($\dagger$)] For any $T_0 ,\epsilon, t_* >0$, there exists $t_0 > t_*$ so that $\Sigma_t$ is $(C^{2,\alpha},\epsilon)$-close to $\Gamma$ on $[t_0, t_0+T_0]$. 
\end{itemize}

Fix any $T_0>0$, and choose $\beta = e^{T_0}$. Let $\epsilon_0,\tau_0$ be as in Lemma \ref{Schulze2.2}, and $\rho,\sigma$ be as in Theorem \ref{3.4}. Let $\epsilon\in (0,\sigma)$ be such that $\epsilon F(\Gamma)\leq \epsilon_0/10$. By monotonicity of $\tilde{F}$, there exists $t_*$ so that $\tilde{F}(t) - F(\Gamma) <\epsilon$ for any $t\geq t_*$. Let $C_0$ be the constant in Theorem \ref{3.4}, which by the last inequality depends only on $\epsilon$ (and $\sigma$, but in particular not on the choice of $t_*$). Let $t_0$ be large enough so that $C_0 t_0^{-\rho} < \epsilon_0/10$. 

Suppose $\Sigma_t$ is $(C^{2,\alpha}, \epsilon)$-close to $\Gamma$ on $[t_0, t_0+T]$, with $t_0> \max(t_*,-\ln(-\tau_0))$ and $T\geq T_0$. Note that this holds with $T=T_0$ by ($\dagger$). Then by Theorem \ref{3.4}, we have 
\begin{equation*}
 \sup_{t_0\leq t_1\leq t_2\leq t_0+T}\|U(t_2)-U(t_1)\|_{L^2} \leq C_0 t_1^{-\rho}.
\end{equation*} 
In particular this does not depend on $T$ (nor $t_0$). By the triangle inequality we then have 

\begin{align*}
\|U(t_2) \|_{L^2} \leq \|U(t_0)\|_{L^2} + C_0 t_1^{-\rho} \leq \|U(t_0)\|_{C^{2,\alpha}} F(\Gamma) + C_0 t_0^{-\rho} \leq \epsilon F(\Gamma) + \epsilon_0/10 < \epsilon_0
\end{align*}
for all $t_2 \in [t_0, t_0+T]$. 

Applying Lemma \ref{Schulze2.2} on $[t_0, t_0+T]$, we may extend the graphical representation (for the rescaled flow) by $\log \beta = T_0$. That is, $\Sigma_t$ will be $(C^{2,\alpha},\sigma)$ close to $\Gamma$ for $t\in [t_0, t_0+ T+T_0]$. By induction on $T$, we conclude that $\Sigma_t$ is $(C^{2,\alpha},\sigma)$ close to $\Gamma$ for all $t\geq t_0$. 

Then $\|U(t_2)-U(t_1)\|_{L^2} \leq C_0 t_1^{-\rho} \to 0$ for all $t_2\geq t_1\geq t_0$, which implies uniqueness. 
\end{proof}

\begin{remark}
Schulze \cite{schulze2014uniqueness} in fact proves a somewhat stronger statement than the uniqueness of tangent flows. We have chosen to focus on uniqueness and present a more streamlined proof, although one could modify Schulze's proof in the analogous manner to prove a version of \cite[Theorem 0.1]{schulze2014uniqueness} for forced MCF. 
\end{remark}

\section{The extension step}
\label{sec:extension}

The goal of this section is to show that if we are very close enough to a cylinder $\Gamma$ on $B_R$, we are still pretty close to a cylinder on $B_{(1+\mu)R}$ for some fixed constant $\mu$ which is subject of Theorem \ref{thm:extension} below. 
In the next section we show that if we are pretty close to a cylinder on $B_{(1+\theta)R}$, we must in fact be very close to a (potentially different) cylinder on $B_R$.
Crucially, $\mu>\theta$ which allows us to iteratively apply this \emph{extension step} and \emph{improvement step} to obtain the scale comparison theorem \ref{thm:scale-comp}.

\subsection{Shrinker and localisation scales}\label{SS:localization scale}

We define a shrinker scale by 
\begin{align}\label{eq:shrinker}
e^{-R_T^2/2} = \int_{T-1}^{T+1} \|\phi\|_{L^2(\Sigma_t \cap B_{3e^{t/2}r_0})}^2 dt.
\end{align}
In comparison to \cite{colding2015uniqueness}, our scale differs by localising the integral to $B_{3e^{t/2}r_0}$.

In this section, we will often work on regions of the rescaled flow, and we would like these to correspond to regions inside the fixed ball $B_{r_0}$ for the original flow. To accomplish this, we will choose a \textit{localisation scale} which satisfies $R^{loc}_t \in o(e^{t/2})$. However, the localisation also introduces error terms, and to overcome these - see estimate \eqref{eq:step-2-estimate} - we make the specific choice $R^{loc}_t := 2\sqrt{t+1}$.

We also define $\lambda_0$ to be a constant such that $\sup_{x\in B_R, r>0} r^{-n} |\Sigma_t\cap B_r(x)| \leq \lambda_0$.
Several results in this section will be stated with this hypothesised area bound. In papers on unforced MCF, this hypothesis would follow from an entropy bound. In this work, the required area bound instead follows from bounds for the initial surface by Corollary \ref{cor:area-bound}, so long as $R< R^{loc}_t$.

\subsection{The extension step}

We may now state the extension step:

\begin{theorem}
\label{thm:extension}
Let $\Sigma_t$ be a RMCFf with $\sup_{x\in B_R, r>0} r^{-n} |\Sigma_t\cap B_r(x)| \leq \lambda_0$ for all $t$ and some constant $\lambda_0$. 
Given $\epsilon_2 >0$, there exist constants $\epsilon_3,t_0, R_0 ,C,\mu,C_l,C_g>0$ so that if $T\geq t_0$, $R_0\leq R< \min(R_T, R^{loc}_{T-1})$ and $B_R\cap \Sigma_t$ is given by the graph $U$ over a fixed cylinder $\Gamma$ with $\|U\|_{C^{2,\alpha}(B_R)}\leq \epsilon_3$ for $t\in [T-1/2,T+1]$, then for $t\in [T-1/2,T+1]$ we have:
\begin{enumerate}
\item $B_{(1+\mu)R}\cap \Sigma_t$ is contained in the graph of some extended $U$ with $\|U\|_{C^{2,\alpha}(B_{(1+\mu)R})}\leq \epsilon_2$;
\item $\|\phi\|^2_{L^2(B_{(1+\mu)R}\cap \Sigma_t)} \leq Ce^{-\frac{R_T^2}{2}} + C_g \lambda_0 e^{-T/2}$;
\item $|\nabla^l A| \leq C_l$ on $B_{(1+\mu )R} \cap \Sigma_t$ for each $l$. 
\end{enumerate}
\end{theorem}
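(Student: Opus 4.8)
The plan is to prove the three conclusions in order, each building on the previous one. The starting point is the contradiction/compactness framework already used for Lemma~\ref{Schulze2.2}: suppose conclusion (1) fails, so there is a sequence of RMCFf's $\Sigma^k_t$, times $T_k\to\infty$, and radii $R_k$ with $R_0\le R_k<\min(R_{T_k},R^{loc}_{T_k-1})$, such that $B_{R_k}\cap\Sigma^k_t$ is $\epsilon_3$-graphical over $\Gamma$ on $[T_k-1/2,T_k+1]$ with $\epsilon_3=\epsilon_3(k)\to0$, but $B_{(1+\mu)R_k}\cap\Sigma^k_t$ is not $\epsilon_2$-graphical. Parabolically rescale so that the window $[T_k-1/2,T_k+1]$ and the ball $B_{R_k}$ become fixed; the forcing constant becomes $e^{-T_k/2}K\to0$, the $F$-monotonicity error terms and the shrinker-scale term $e^{-R_{T_k}^2/2}$ and the localisation-error term $\lambda_0 e^{-T_k/2}$ all go to zero, and the hypothesised area bound $\lambda_0$ passes to the limit. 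By White's compactness for $K$-almost Brakke flows and a diagonal argument, $\Sigma^k_t$ subconverges to an unforced Brakke flow which, by the graphicality hypothesis on $B_{R_k}$ and the fact that $\|\phi\|_{L^2}\to0$ on the rescaled window, is the static cylinder $\Gamma$ (a shrinker) on the relevant region. The convergence is smooth on compact subsets by White's local regularity theorem for almost Brakke flows. Rigidity/unique continuation for the cylinder then forces the limit to be exactly $\Gamma$ on a slightly larger ball, contradicting the non-graphicality assumption, provided $\mu$ is chosen small enough (depending only on $\epsilon_2$, $\lambda_0$, $n$, $N$, $\Gamma$). This gives (1), and also $R_0, t_0, \epsilon_3$.

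For conclusion (2), I would integrate the almost-monotonicity formula (Lemma~\ref{J mono}, or rather its differential form as in Section~\ref{sec:almost-monotonicity}) localised to $B_{3e^{t/2}r_0}$, comparing the value of the weighted Gaussian area at the ends of a window of length $\approx 2$ straddling $T$. As in the compact case, $\partial_t\tilde F$ controls $\int\|\phi\|_{L^2}^2$ up to the forcing error $K^2e^{-t}F$ and the localisation error coming from the cutoff region $B_{4r_0}\setminus B_{3r_0}$, which after rescaling is the term $\gamma e^{-T/2}$-type contribution; by definition of $R_T$ the bulk term $\int_{T-1}^{T+1}\|\phi\|_{L^2(\Sigma_t\cap B_{3e^{t/2}r_0})}^2$ equals $e^{-R_T^2/2}$. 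A mean-value argument over the time window, together with the fact (from (1) and interior parabolic estimates, since on $B_{(1+\mu)R}$ the flow is now a smooth small graph over $\Gamma$) that $\|\phi\|_{L^2}$ has controlled time-derivative, upgrades the integrated bound to a pointwise-in-$t$ bound $\|\phi\|_{L^2(B_{(1+\mu)R}\cap\Sigma_t)}^2\le Ce^{-R_T^2/2}+C_g\lambda_0 e^{-T/2}$ for $t\in[T-1/2,T+1]$. The localisation scale choice $R^{loc}_t=2\sqrt{t+1}$ is exactly what is needed so that $e^{-(R^{loc}_t)^2/2}$ (or the associated cutoff error, referenced as \eqref{eq:step-2-estimate}) is absorbed into the $e^{-T/2}$ term.

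Conclusion (3) is the interior curvature estimate: once (1) holds, on $B_{(1+\mu)R}\cap\Sigma_t$ (unrescale to get a piece of the original MCFf inside $B_{r_0}$, where $\|\mathbf F\|_{C^k}\le K$ and we have the local area bound $\lambda_0$), one applies the Ecker–Huisken-type interior estimates for MCFf proved in Appendix~\ref{sec:interior-estimates}, i.e. a local bound on $|A|$ from a local area bound plus a Brakke-regularity input, bootstrapped to bounds $|\nabla^l A|\le C_l$ using higher-order interior parabolic estimates for the (quasilinear) MCFf system; rescaling back to the RMCFf introduces only $e^{t/2}$-factors which are bounded on the relevant window since $t\in[T-1/2,T+1]$ and $R<R^{loc}_{T-1}\in o(e^{T/2})$. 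I expect the main obstacle to be conclusion (2): carefully tracking the localisation error terms in the almost-monotonicity formula through the rescaling — showing they really are dominated by $C_g\lambda_0 e^{-T/2}$ rather than something larger — and converting the time-integrated $L^2$ bound into a bound valid at every $t$ in the window, which is where one must invoke (1) plus interior estimates to control $\partial_t\|\phi\|_{L^2}^2$. The compactness argument for (1) is essentially a direct adaptation of Schulze's Lemma~2.2 combined with the rigidity of the cylinder as in Colding–Minicozzi, so while it requires care (especially the unique-continuation step that promotes "coincides with $\Gamma$ on $B_R$" to "coincides on $B_{(1+\mu)R}$"), it should be routine; (3) is a black-box application of the appendix.
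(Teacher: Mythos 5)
Your argument for conclusion (1) has a genuine gap, and it is exactly the gap that the noncompactness of the cylinder creates. In your contradiction sequence the radius is not fixed: the theorem must hold for all $R$ up to $\min(R_T,R^{loc}_{T-1})$, which tends to infinity with $T$ (and must, since the scale comparison in Section \ref{sec:scale-comparison} iterates the extension step up to scale $\approx R_*$). Compactness for almost Brakke flows plus White's local regularity gives smooth convergence only on \emph{fixed compact subsets}, so it can never contradict a failure of $\epsilon_2$-graphicality on balls $B_{(1+\mu)R_k}$ with $R_k\to\infty$; soft arguments of this type yield no definite multiplicative gain in scale. Two further steps you invoke do not survive scrutiny: (i) you cannot ``parabolically rescale so that the window $[T_k-1/2,T_k+1]$ and the ball $B_{R_k}$ become fixed'' --- a parabolic dilation that normalizes $B_{R_k}$ shrinks the time window like $R_k^{-2}$, and in any case the rescaled-flow equation $\partial_t x=\phi+e^{-t/2}\mathbf{G}^\perp$ is not parabolically scale invariant because of the $x^\perp/2$ term; (ii) the ``rigidity/unique continuation'' step, promoting coincidence with $\Gamma$ on the graphical region to coincidence on a strictly larger ball, is not available for the limit object, which is merely a Brakke flow (respectively an $F$-stationary varifold where $\int|\phi|^2\rho\to0$) about which you know nothing outside $B_{R_k}$ beyond the area bound; unique continuation in that generality is false, and even granting it you would only recover information about the limit, not uniform closeness of $\Sigma^k_t$ at scale $(1+\mu)R_k$.

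The paper's proof is instead quantitative and gets the $(1+\mu)$ factor from a parabolic mechanism, not from compactness: first, the shrinker-scale bound $R\le R_T$ makes $\int\int_{B_{R+2}}|\phi|^2\rho$ small, which lets one pull Gaussian density estimates backwards in time (Lemma \ref{L:density estimates back in time}) and upgrade them, via White's Brakke regularity and the interior estimates of Appendix \ref{sec:interior-estimates}, to curvature bounds on a slightly larger ball and a longer time interval (Proposition \ref{thm:curvature}); this is where conclusion (3) really comes from, and it precedes, rather than follows, the graphical extension. Second, the mean value inequality (Lemma \ref{lemma:meanvalue}), proved from the evolution equation for the velocity $\tilde\phi=\phi+e^{-t/2}\mathbf{G}^\perp$ and requiring only the curvature bound just obtained (not your conclusion (1)), converts the time-integrated bound into the pointwise-in-time estimate of conclusion (2); interpolation then bounds the velocity in $C^{2,\alpha}$ and extends graphicality backwards in time. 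Finally, the spatial gain comes from the short-time stability Lemma \ref{lem:unif-stability} applied to the unrescaled MCFf: running the flow forward a definite amount of time keeps it graphical over $\sqrt{-s}\,\Gamma$, and the $(-s)^{-1/2}$ dilation built into the rescaled flow is what enlarges the scale by the factor $1+\mu$. Your sketches of (2) and (3) are closer in spirit to the paper, but as written they depend on (1) and on an ordering of the steps that the actual argument cannot follow; without a replacement for the compactness argument the proposal does not prove the theorem.
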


To prove this result we follow the overall proof strategy in Section 5 of \cite{colding2015uniqueness}, which consists of three main steps:

\begin{itemize}
    \item Step 1: Curvature bounds on a larger time interval.
    \item Step 2: Cylindrical estimates on a larger time interval.
    \item Step 3: Cylindrical estimates on a larger scale.
\end{itemize}

Here `cylindrical estimates' means estimating closeness to the cylinder (in $C^{2,\alpha}$). 
To give an overview of these steps, we work backwards: The idea is that a uniform short-time stability for MCFf (Step 3) will translate to an increase in scale for the rescaled flow. 
This requires, on a larger time interval, both being close enough to the cylinder on the original scale (Step 1) and having curvature estimates on the extended scale (Step 2). 
For Step 1, by a monotonicity-type calculation we show that Gaussian area bounds can be pulled backward in time. These can be improved to curvature estimates using White's Brakke regularity theorem \cite{white2005local}, and higher order interior estimates for MCFf.
These curvature estimates imply that the speed of RMCFf is small, which is used in Step 2 to allow us to extend the graphical scale backwards in time.

\subsection{Pulling back density estimates in time}\label{SS:density}

As in \cite[Corollary 5.15]{colding2015uniqueness}, we show that we can pull back density estimates in time assuming the speed of the flow is small in an integral sense.
This comes at the cost of moving slightly inwards in space, but only by a fixed additive amount. 

Compared to Colding-Minicozzi, we need to additionally assume in the lemma below that $R$ is bounded from above by the localization scale.
Since $e^{-t_0/2} R^{loc}_{t_0} \to 0$, this allows us to control the error terms coming from the forcing term.

\begin{lemma}\label{L:density estimates back in time}
Given $\epsilon_2>0$, $\tau\in (0,1/2]$, $\lambda_0$, $r_1$, there exists $\mu_2>0$, $R_0$, $t_0$ such that the following holds: 

Suppose $t_0 \leq t_1<t_2 $ and $\{\Sigma_t\}_{[t_1,t_2]}$ is a RMCFf such that for some $\lambda_0$ we have $\sup_{x\in B_R, r>0} r^{-n} |\Sigma_t\cap B_r(x)| \leq \lambda_0$. Further suppose that $R+3 \leq R^{loc}_{t_0}$ and for $x_0 \in B_{R-R_0}$, 
\begin{align*}
\int_{t_1}^{t_2} dt\int_{B_{R+2 \cap \Sigma_t}} |\phi|^2 \rho \leq& \frac{\mu_2^2 e^{-(R+2)^2/4}}{R^2 (t_2-t_1+1)},\\
(4\pi \tau)^{-n/2} \int_{\Sigma_{t_2}} e^{-\frac{|x-x_0|^2}{4\tau}} \leq& 1+\frac12 \epsilon_2.
\end{align*}
Then
\begin{align*}
(4\pi \tau)^{-n/2} \int_{\Sigma_{t_1}} e^{-\frac{|x-x_0|^2}{4\tau}} \leq 1+ \epsilon_2.
\end{align*}
\end{lemma}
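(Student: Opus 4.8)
The plan is to argue by a Huisken-type monotonicity computation adapted to the forced setting, exploiting the fact that the forcing term $e^{-t/2}\mathbf{G}^\perp$ is small because $R < R^{loc}_{t_0} = 2\sqrt{t_0+1}$, so that $e^{-t/2}$ compensates for the loss introduced by localizing to a ball comparable to $R$ in the rescaled picture. Concretely, working with the original (unrescaled) MCFf $M_s$, I would consider the localized Gaussian density ratio $\Theta(s) := (4\pi(\sigma-s))^{-n/2}\int_{M_s} \psi_{x_0}\, e^{-\frac{|x-x_0|^2}{4(\sigma-s)}}$ centered at the spacetime point corresponding to $(x_0,t_2)$ and a slightly later backward time $\sigma$, where $\psi_{x_0}$ is a spatial cutoff supported near the relevant ball. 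The key is the almost-monotonicity formula of the type recorded in Section \ref{sec:almost-monotonicity}: along MCFf one has
\begin{equation*}
\frac{d}{ds}\Theta(s) \geq -\frac{1}{2}\int_{M_s} \psi_{x_0}\, Q^2\, \Phi - (\text{forcing error}) - (\text{cutoff error}),
\end{equation*}
with $Q$ the usual Huisken quantity involving $\mathbf{H}$, $\frac{(x-x_0)^\perp}{2(\sigma-s)}$, and the cutoff gradient term. Translating this to the rescaled flow $\Sigma_t$, the $Q^2$ term becomes controlled by $\int |\phi|^2\rho$ (plus lower-order pieces), which is exactly the quantity we are assuming is bounded by $\mu_2^2 e^{-(R+2)^2/4}/(R^2(t_2-t_1+1))$.

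The steps in order: (1) Rewrite the hypotheses in the unrescaled picture, noting $B_{R+2}$ in $\Sigma_t$ corresponds to a ball of radius $\sim (R+2)e^{t/2}$ for $M_s$, and choose the cutoff $\psi_{x_0}$ supported in the slightly larger ball $B_{R+3}$ (this is where the additive constant $3$ and the condition $R+3\le R^{loc}_{t_0}$ enter, ensuring we stay inside the region where the area bound $\lambda_0$ and the global bounds apply). (2) Integrate the almost-monotonicity inequality from the time corresponding to $t_1$ up to that of $t_2$; the main term contributes $\int_{t_1}^{t_2}\!\int |\phi|^2\rho\,dt \lesssim \mu_2^2 e^{-(R+2)^2/4}/R^2$ after Cauchy-Schwarz in time absorbs the $(t_2-t_1+1)$ factor. (3) Estimate the cutoff error term: on the annulus where $|D\psi_{x_0}|\ne 0$, the Gaussian weight $e^{-\frac{|x-x_0|^2}{4(\sigma-s)}}$ is of size $\sim e^{-cR^2}$ (in rescaled coordinates $e^{-c(R+2)^2}$-ish), and multiplying by the area bound $\lambda_0$ on that region gives a contribution controlled by the right-hand side once $R_0$ is large. (4) Estimate the forcing error term: it is bounded by $\|\mathbf{F}\|_{C^0}$ times the Gaussian mass, hence by $K\cdot O(e^{-t/2}\cdot(\ldots))$, and since $t\ge t_0$ and $R\le R^{loc}_{t_0}$, choosing $t_0$ large makes this smaller than $\frac14\epsilon_2$. (5) Handle the shift from $\Theta$ (which uses a fixed backward time $\sigma$) to the density ratio at scale $\tau$ appearing in the statement: since the density ratio is itself monotone in $\tau$ for the limiting shrinker and the flow is a Brakke flow, comparing the $\tau$-scale ratio at $t_2$ to $\Theta$ and then $\Theta$ to the $\tau$-scale ratio at $t_1$ costs only controlled errors (again exponentially small in $t_0$); alternatively one can run the monotonicity argument directly with the scale-$\tau$ heat kernel. (6) Collect: the total gain over $1+\frac12\epsilon_2$ is bounded by $C\mu_2^2 e^{-(R+2)^2/4}/R^2 + (\text{errors}) < \frac12\epsilon_2$ provided $\mu_2$ is chosen small (depending on $\epsilon_2,\lambda_0,r_1$) and $R_0, t_0$ large.

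The main obstacle I expect is the bookkeeping of the localization error — tracking exactly how the cutoff annulus term interacts with the hypothesized bound $\mu_2^2 e^{-(R+2)^2/4}/(R^2(t_2-t_1+1))$, and verifying that the precise exponent $(R+2)^2/4$ (rather than $R^2/4$ or $(R+3)^2/4$) is what makes the annular Gaussian weight, of order $e^{-(R+3)^2/4}$ say, genuinely dominated. One must be careful that the Gaussian weight at the cutoff annulus has a strictly better exponent than the hypothesized bound so that the ratio of the two goes to zero as $R\to\infty$; this is exactly why Colding-Minicozzi (and here) move inward by the fixed additive amount rather than multiplicatively, and why the hypothesis is stated at scale $R+2$ while the cutoff lives at $R+3$. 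A secondary subtlety is ensuring all the "large $t_0$" and "small $\mu_2$" choices can be made in the correct logical order (the constants $\mu_2, R_0, t_0$ may depend on $\epsilon_2,\tau,\lambda_0,r_1$ but not on $R$ itself, which ranges over $[R_0,\infty)$), so one should track the $R$-dependence of every error term and confirm it is either $R$-independent or decaying in $R$.
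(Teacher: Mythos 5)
Your overall route is genuinely different from the paper's: you propose to run Huisken-type almost-monotonicity for the unrescaled flow with the backward heat kernel centered at a spacetime point near $(x_0,t_2)$, whereas the paper (following Colding--Minicozzi's Corollary 5.15) differentiates the \emph{fixed} test function $f=\eta\, e^{|x|^2/4}e^{-|x-x_0|^2/(4\tau)}$ directly along the rescaled flow and integrates from $t_1$ to $t_2$. Unfortunately, your route has a real gap at exactly the step you flag as (5). If you choose the backward time $\sigma$ so that the kernel has scale $\tau$ (in rescaled units) at time $t_2$, then at time $t_1$ its scale in rescaled units is $1-(1-\tau)e^{-(t_2-t_1)}$, which is strictly larger than $\tau$ and tends to $1$ when $t_2-t_1$ is large (the lemma allows arbitrary $t_1<t_2$; this is why the hypothesis carries the factor $(t_2-t_1+1)$). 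So Huisken monotonicity pulls back a density at the \emph{wrong, larger} scale, and the bridge you invoke --- ``the density ratio is monotone in $\tau$ \dots costs only controlled errors'' --- is not valid: for an (almost-)Brakke flow the Gaussian density ratio at a fixed time is not monotone in the scale parameter, and there is no small-error comparison between the scale-$\tau$ and scale-$\approx 1$ ratios at time $t_1$. Relatedly, your step (2) misplaces where the $\phi$-hypothesis enters: in a genuine Huisken monotonicity the $Q^2$ term has a favorable sign and needs no bound (and for general $x_0,\tau$ it is not $\int|\phi|^2\rho$ anyway); the smallness of $\phi$ is needed precisely because one must abandon the monotone kernel and work with a fixed-scale Gaussian, whose evolution produces sign-indefinite drift terms.

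Your parenthetical fallback (``run the argument directly with the scale-$\tau$ heat kernel'') is in fact the paper's proof, but then the substantive work is the part you have not carried out: differentiating $\int_{\Sigma_t} f\rho$ along the RMCFf produces the terms $\langle D\eta,\phi\rangle$, $\tfrac{1}{2\tau}\langle x_0,\phi\rangle$, $\bigl(1-\tfrac{1}{\tau}\bigr)\tfrac{1}{2}\langle x,\phi\rangle$ and $|\phi|^2$ against the weight $e^{-|x-x_0|^2/(4\tau)}$, and one must convert to the weight $\rho$ on $B_{R+2}$ via Cauchy--Schwarz, which costs a factor $e^{(R+2)^2/8}$ and a factor $R/\tau$; the hypothesized bound $\mu_2^2 e^{-(R+2)^2/4}/\bigl(R^2(t_2-t_1+1)\bigr)$ is tailored exactly to absorb these (not, as in your ``main obstacle'' paragraph, to dominate the cutoff-annulus Gaussian weight --- that annulus term and the far region are instead handled by the polynomial area bound and choosing $R_0$ large, giving $\epsilon_2/100$). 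Finally, the forcing terms are estimated as you indicate, using $e^{-t_0/2}R^{loc}_{t_0}\to 0$; that part of your plan matches the paper. To repair your write-up, either redo it with the fixed scale-$\tau$ test function from the start, or restrict to a version of the lemma with $t_2-t_1$ bounded and supply an honest comparison between the two scales, which your current step (5) does not provide.
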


\begin{proof}
By the RMCFf equation we have
\begin{align*}
\partial_t\int_{\Sigma_t}f\rho=\int_{\Sigma_t}\langle D\log f,\phi\rangle f\rho-\int_{\Sigma_t}|\phi|^2f\rho + e^{-t/2} \int_{\Sigma_t} \langle \mathbf{G}^\perp, Df -f\phi\rangle \rho.
\end{align*}
Set $f(x)=\eta e^{\frac{|x|^2}4}e^{-\frac{|x-x_0|^2}{4\tau}}$ for a cutoff function $\eta$ chosen below.
We obtain
\begin{align*}
\begin{split}
\int_{\Sigma_{t_2}}\eta e^{-\frac{|x-x_0|^2}{4\tau}}-\int_{\Sigma_{t_1}}\eta e^{-\frac{|x-x_0|^2}{4\tau}}
=&\int_{t_1}^{t_2}\int_{\Sigma_t}\left(\langle D\eta,\phi\rangle e^{-\frac{|x-x_0|^2}{4\tau}}+\frac{\langle x_0,\phi\rangle}{2\tau} e^{-\frac{|x-x_0|^2}{4\tau}}\right)\\
&+\int_{t_1}^{t_2}\int_{\Sigma_t}\left(\left(1-\frac1\tau\right)\frac{\langle x,\phi\rangle}{2}\eta e^{-\frac{|x-x_0|^2}{4\tau}}-|\phi|^2e^{-\frac{|x-x_0|^2}{4\tau}}\right)\\ 
&+\int_{t_1}^{t_2}e^{-t/2} \int_{\Sigma_t} \langle \mathbf{G}^\perp, Df -f\phi\rangle \rho .
\end{split}
\end{align*}
Due to the area growth bound there exists $R_0=R_0(n)$ such that 
\begin{align*}
(4\pi\tau)^{-n/2}\int_{\Sigma_t\setminus B_{R_0\sqrt\tau}(y)}e^{-\frac{|x-x_0|^2}{4\tau}}\le \epsilon_2/100.
\end{align*}
Choose a cut-off function $\eta$ with $\eta\le1$, $|\nabla\eta|\le1$ and $\eta=1$ on $B_R$ and $\eta=0$ outside $B_{R+2}$.
Let $\tau\le1$.
\begin{align*}
\begin{split}
\int_{B_R\cap\Sigma_{t_1}}e^{-\frac{|x-x_0|^2}{4\tau}}\le&\int_{\Sigma_{t_1}}\eta e^{-\frac{|x-x_0|^2}{4\tau}}\\
\le&\int_{B_{R+2}\cap\Sigma_{t_2}}e^{-\frac{|x-x_0|^2}{4\tau}}\\
&+\int_{t_1}^{t_2}\int_{\Sigma_t\cap (B_{R+2}\setminus B_R)}\left( |\phi| e^{-\frac{|x-x_0|^2}{4\tau}}+\frac{|\langle x_0,\phi\rangle|}{2\tau} e^{-\frac{|x-x_0|^2}{4\tau}}\right)\\
&+\int_{t_1}^{t_2}\int_{B_{R+2}\cap\Sigma_t}\left(\left(\frac1\tau-1\right)\frac{|\langle x,\phi\rangle|}{2}\eta e^{-\frac{|x-x_0|^2}{4\tau}}+|\phi|^2e^{-\frac{|x-x_0|^2}{4\tau}}\right)\\
&+\int_{t_1}^{t_2}e^{-t/2} \int_{\Sigma_t} \langle \mathbf{G}^\perp, Df -f\phi\rangle \rho.
\end{split}
\end{align*}
Using the bounds for $\mathbf{G}$, we have $\left|\int_{t_1}^{t_2}e^{-t/2}dt \int_{\Sigma_t} \langle \mathbf{G}^\perp, f\phi\rangle \rho\right| \leq Ke^{-t_1/2} \int_{t_1}^{t_2} dt \int_{B_{R+2}\cap\Sigma_t} |\phi| e^{-\frac{|x-x_0|^2}{4\tau}}$.
As in the proof of Corollary 5.15 of \cite{colding2015uniqueness}, we can use Cauchy-Schwarz and the area growth bound to estimate
\begin{align*}
\int_{t_1}^{t_2}  \int_{B_{R+2} \cap \Sigma_t} |\phi| e^{-\frac{|x-x_0|^2}{4\tau}} \leq \sqrt{(4\pi \tau)^{n/2} (t_2-t_1)\lambda_0} e^\frac{(R+2)^2}{8} \left(\int_{t_1}^{t_2} dt \int_{B_{R+2}\cap\Sigma_t} |\phi|^2 \rho \right)^{1/2}.
\end{align*}
Also we have $|D\log f| \leq |D \log \eta| + {\frac{1}{\tau}}(R+2)$. Therefore 
\begin{align*}
\begin{split}
\left|\int_{t_1}^{t_2}e^{-t/2}dt \int_{\Sigma_t} \langle \mathbf{G}^\perp, Df \rangle \rho \right| \leq& {\frac{1}{\tau}}K \int_{t_1}^{t_2} e^{-t/2}dt \int_{B_{R+2}\cap\Sigma_t} (R+3) e^{-\frac{|x-x_0|^2}{4\tau}} \\
\leq& {\frac{1}{\tau}}Ke^{-t_1/2} (R+3) (4\pi \tau)^{n/2}\lambda_0.
\end{split}
\end{align*}
Again following \cite{colding2015uniqueness}, we have 
\begin{align*}
\begin{split}
&(4\pi \tau)^{-n/2} \int_{\Sigma_{t_1}} e^{-\frac{|x-x_0|^2}{4\tau}} \\\leq& (4\pi \tau)^{-n/2} \int_{\Sigma_{t_2}} e^{-\frac{|x-x_0|^2}{4\tau}} + C(\mu_2/\tau + \mu^2_2 + \mu e^{-t_0/2} ) + K \lambda_0 e^{-t_0/2}R^{loc}_{t_0} +\frac{\epsilon_2}{100}.
\end{split}
\end{align*}
Since $e^{-t_0/2} R^{loc}_{t_0} \to 0$ by our definition of the localisation scale, choosing $\mu$ {small} and $t_0$ large yields the result. 
\end{proof}

\subsection{Extending the curvature bound}\label{SS:curvature}
Having a density estimate, we apply pseudolocality for MCF to obtain curvature estimates.
Here we state the result in terms of the rescaled flow, and the rescaling contributes to the increase in scale.

\begin{proposition}\label{thm:curvature}
Given $n,\lambda_0$, there exist $\sigma$ and $\delta_2$ such that for any $\tau \in (0,1/2]$, there exists $\mu_2$, $t_0$ such that the following holds: 

Suppose $t_0 \leq t_1<t_2 $ and $\{\Sigma_t\}_{[t_1,t_2]}$ is a RMCFf such that for some $\lambda_0$ we have $\sup_{x\in B_R, r>0} r^{-n} |\Sigma_t\cap B_r(x)| \leq \lambda_0$. Further suppose that $R+3 \leq R^{loc}_{t_0}$ and for $x_0 \in B_{R-\sigma}$, 
\begin{align*}
\int_{t_1}^{t_2}  \int_{B_{R+2} \cap \Sigma_t} |\phi|^2 \rho dt\leq \frac{\mu^2_2 e^{-(R+2)^2/4}}{R^2 (t_2-t_1+1)}
\end{align*} 
and 
\begin{align*}
\sup_{B_{\sigma\sqrt{\tau}(x_0)\cap\Sigma_{t_2}}} |A|^2 \leq \delta_2/\tau.
\end{align*} 

Then for all $t\in [t_1 - \log(1-7\tau/8), t_1-\log(1-\tau)]$, we have 
\begin{align*}
\sup_{B_{\sqrt{\tau}/3}(e^{(t-t_1)/2} x_0) \cap \Sigma_t} (|A|^2+ \tau^l |\nabla^l A|^2) \leq C_l/\tau.
\end{align*} 
\end{proposition}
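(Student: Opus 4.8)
The plan is to combine the density pullback from Lemma \ref{L:density estimates back in time} with a pseudolocality/Brakke-regularity statement for $K$-almost Brakke flows, and then upgrade to higher order estimates via the interior estimates of Appendix \ref{sec:interior-estimates}. First I would pass from the rescaled flow back to the unrescaled MCFf: if $\Sigma_t = e^{t/2}M_s$, $s = -e^{-t}$, then an estimate at rescaled time $t$ on a ball of radius $\sqrt{\tau}/3$ about $e^{(t-t_1)/2}x_0$ corresponds, after unrescaling, to an estimate on $M_s$ near $x_0$ at unrescaled times $s$ ranging over an interval whose length is comparable to $\tau|s_1|$, where $s_1 = -e^{-t_1}$; the choice of time interval $[t_1 - \log(1-7\tau/8),\, t_1 - \log(1-\tau)]$ is exactly engineered so that after unrescaling one lands in a fixed-proportion parabolic neighbourhood of $(x_0, s_1)$ of the sort appearing in pseudolocality. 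Here it is important that $R + 3 \le R^{loc}_{t_0}$ so that the relevant region sits inside the fixed ball $B_{r_0}$ where $\|\mathbf F\|_{C^k}\le K$ and where we have the area bound $\lambda_0$.

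Next, I would invoke Lemma \ref{L:density estimates back in time} with $\epsilon_2$ the dimensional threshold from White's local regularity theorem \cite{white2005local}: the smallness hypothesis on $\int|\phi|^2\rho$ in the Proposition is precisely the hypothesis of that lemma (up to renaming $\sigma$ versus $R_0$), so we conclude the Gaussian density ratio $(4\pi\tau)^{-n/2}\int_{\Sigma_{t_1}} e^{-|x-x_0|^2/(4\tau)}$ is close to $1$. The hypothesis $\sup_{B_{\sigma\sqrt\tau}(x_0)\cap\Sigma_{t_2}}|A|^2 \le \delta_2/\tau$ serves, together with the density bound at $t_2$, as the initial-condition input (smallness of curvature on the terminal slice) needed to run pseudolocality backwards — more precisely, it guarantees that the flow is a smooth graph of small gradient on the terminal ball, so that the density bound at $t_1$ plus the terminal graphicality let us apply White's theorem (valid for $K$-almost Brakke flows by \cite{white2005local}, with the forcing absorbed since $K$ is fixed and $t_0$ large makes $e^{-t/2}K$ small) on the intermediate slices to get $\sup |A|^2 \le C/\tau$ on $B_{\sqrt\tau/3}(e^{(t-t_1)/2}x_0)\cap\Sigma_t$.

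Finally, the higher-order bounds $\tau^l|\nabla^l A|^2 \le C_l/\tau$ follow by feeding the just-obtained $C^0$ curvature bound into the interior estimates for MCFf of Appendix \ref{sec:interior-estimates} (Ecker–Huisken style), applied on a slightly smaller parabolic neighbourhood, after again unrescaling to MCFf; the scaling weights $\tau^l$ are the natural parabolic ones and the forcing term contributes only lower-order corrections controlled by $K$ and the smallness of $e^{-t/2}$. I expect the main obstacle to be bookkeeping the unrescaling carefully enough that the time interval $[t_1-\log(1-7\tau/8),\,t_1-\log(1-\tau)]$ really does map into the region where pseudolocality applies with the stated constants, and ensuring the forcing-induced error terms (which scale like $e^{-t/2}K$ and $e^{-t/2}R^{loc}_{t}$) are dominated once $t_0$ is taken large — this is where the restriction $R + 3 \le R^{loc}_{t_0}$ and the sublinear growth of $R^{loc}$ are essential, exactly as in Lemma \ref{L:density estimates back in time}.
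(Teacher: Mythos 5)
Your proposal is correct and follows essentially the same route as the paper's (brief) proof: unrescale to the original flow inside $B_{2r_0}$, pull back the Gaussian density via Lemma \ref{L:density estimates back in time}, apply White's Brakke regularity theorem for almost Brakke flows, and obtain the higher derivative bounds from the interior estimates of Appendix \ref{sec:interior-estimates}, using $R+3\le R^{loc}_{t_0}$ to keep everything inside the region where $\mathbf F$ is controlled. The only bookkeeping point to tighten is that the density bound at $t_2$ is not a separate given: it is itself derived from the hypothesis $\sup_{B_{\sigma\sqrt\tau}(x_0)\cap\Sigma_{t_2}}|A|^2\le\delta_2/\tau$ (graphicality with small gradient at scale $\sqrt\tau$, plus the area ratio bound $\lambda_0$ for the Gaussian tail), exactly as your ``terminal graphicality'' remark suggests.
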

\begin{proof}
We proceed as in \cite{colding2015uniqueness}, applying White's version \cite{white2005local} of Brakke regularity theorem to the original flow as an almost Brakke flow in $B_{2r_0}$; note that we always apply it at a centre $y\in B_{r_0}$. 
Observe that Brakke's $\epsilon$ regularity theorem can be applied since the bound on $|A|^2$ implies a density estimate which we pull back in time via Lemma \ref{L:density estimates back in time}.
Interior estimates for MCF with forcing (see Appendix \ref{sec:interior-estimates}) give the higher derivative estimates. Note that we can do so because $R< R^{loc}_{t_0}$ so in particular $e^{-t_0/2}R \leq \sigma <r_0$ for large $t_0$. 

We remark that White's theorem is stated as a $C^{2,\alpha}$ estimate; one could also prove a $C^{l,\alpha}$ version of White's theorem following his arguments - see for instance \cite[Section 8]{edelen}, where such a result is proven in the free boundary setting. This would alleviate the need for the PDE interior estimates in Appendix \ref{sec:interior-estimates}, but we have included them as they are somewhat more concrete and may be of independent interest. 
\end{proof}

\subsection{The mean value inequality}\label{SS:meanvalue}

In this section we prove a mean value inequality for the rescaled flow, which will be required for the proof of the $\|\phi\|_{L^2}$ bound appearing in Theorem \ref{thm:extension}. It will show that $\|\phi\|_{L^2}$ can be controlled on most time slices by its average in time. 
First, we define the elliptic operators
\begin{align*}
L:=\mathcal L+\frac12 + \langle \cdot, A_{kl}\rangle A_{kl}
\end{align*}
and
\begin{align*}
\mathcal L:=\Delta-\frac12\nabla_{x^T}.
\end{align*}
Next, we record the evolution of $\phi$ under the rescaled flow (recall $\Pi$ is the normal projection):

\begin{lemma}
\label{lem:evol-phi}
If $\Sigma_t$ is a RMCFf, then we have the evolution equation
\begin{align*}
(\nabla_{\pr_t} - L) \phi = e^{-t/2} (\Lap \mathbf{G}^\perp + \langle \mathbf{G}^\perp, A_{ij}\rangle A_{ij} + \frac{1}{2} A(x^T, \mathbf{G}^T) - \frac{1}{2} \nabla^\perp_{x^T} \mathbf{G} + \frac{1}{2}\mathbf{G}^\perp).
\end{align*}
In particular, 
\begin{align*}
(\nabla_{\pr_t} - L) (\phi+ e^{-t/2}\mathbf{G}^\perp)=&\pr_t (e^{-t/2}\mathbf{G}^\perp) \\
=&-\frac{1}{2}e^{-t/2}\mathbf{G}^\perp + \Pi(DG\cdot (\phi+e^{-t/2}\mathbf{G}^\perp)) + (\pr_t\Pi)(\mathbf{G}). 
\end{align*}
\end{lemma}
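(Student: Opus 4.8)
\textbf{Proof plan for Lemma \ref{lem:evol-phi}.}

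The plan is to compute the evolution of $\phi = \mathbf{H} + \tfrac{x^\perp}{2}$ along the rescaled flow by reducing to the already-known unforced case and isolating the new contributions of the forcing term. First I would recall that the rescaled flow satisfies $\pr_t x = \tilde{\phi} := \phi + e^{-t/2}\mathbf{G}^\perp$, so that the velocity field differs from $\phi$ by $V := e^{-t/2}\mathbf{G}^\perp$. The strategy is: (i) write the general first-variation-type formulas for how $\mathbf{H}$, the normal space, and $x^\perp$ evolve when the immersion moves with an arbitrary normal velocity $W$ (these are standard and the relevant ones are essentially Colding-Minicozzi's computation for the unforced rescaled flow, e.g. the identity $(\nabla_{\pr_t} - L)\phi = 0$ when $W = \phi$); (ii) apply this with $W = \tilde\phi = \phi + V$ and use linearity of the linearization in the perturbation of the velocity to split off the terms linear in $V$; (iii) identify the resulting expression as the stated right-hand side, and finally simplify using $\pr_t V$ to get the ``In particular'' statement.

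In more detail: since the operator $\nabla_{\pr_t} - L$ applied to $\phi$ vanishes identically when the flow is the \emph{unforced} rescaled MCF (this is precisely the computation underlying the Colding-Minicozzi Jacobi operator and is recalled in Appendix \ref{sec:evol-phi}), the only new terms arise from replacing the velocity $\phi$ by $\phi + V$. I would track each place the velocity enters the evolution of $\phi$. The mean curvature evolves by a Jacobi-type equation in which the velocity $W$ appears through $\Lap^\perp W$, the Simons-type term $\langle W, A_{ij}\rangle A_{ij}$, and a term coming from the tangential reparametrization; the normal part $x^\perp$ picks up contributions of the form $\tfrac12 A(x^T, W^T)$, $-\tfrac12 \nabla^\perp_{x^T} W$, and $\tfrac12 W$ from the interplay of $x$ moving and the normal projection $\Pi$ changing (using $\pr_t \Pi$). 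Setting $W = \phi + V$ and subtracting the $W=\phi$ identity leaves exactly the terms linear in $V = e^{-t/2}\mathbf{G}^\perp$:
\begin{align*}
(\nabla_{\pr_t} - L)\phi = \Lap^\perp V + \langle V, A_{ij}\rangle A_{ij} + \tfrac12 A(x^T, V^T) - \tfrac12 \nabla^\perp_{x^T} V + \tfrac12 V,
\end{align*}
and pulling the scalar $e^{-t/2}$ out of each term (noting $V^T = e^{-t/2}\mathbf{G}^T$ and $\Lap^\perp(e^{-t/2}\mathbf{G}^\perp) = e^{-t/2}\Lap \mathbf{G}^\perp$ in the notation of the statement) gives the displayed formula. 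For the ``In particular'' part, since $\tilde\phi = \phi + V$ satisfies $(\nabla_{\pr_t} - L)\tilde\phi = (\nabla_{\pr_t} - L)\phi + (\nabla_{\pr_t} - L)V$, and the right-hand side above is designed to be precisely $(\nabla_{\pr_t} - L)V - \nabla_{\pr_t} V + \nabla_{\pr_t} V$... more cleanly: a direct check shows the stated RHS of the first equation equals $-(\nabla_{\pr_t} - L)V + \nabla_{\pr_t} V$ is \emph{not} quite it; rather the cleanest route is to observe that $(\nabla_{\pr_t} - L)\tilde\phi$ must equal $\nabla_{\pr_t}V$ plus whatever the unforced computation produces, and then compute $\nabla_{\pr_t} V = \pr_t(e^{-t/2}\mathbf{G}^\perp)$ directly: differentiate $e^{-t/2}$, differentiate $\mathbf{G}(e^{t/2}x,s)$ using the chain rule and $\pr_t x = \tilde\phi$ (giving the $\Pi(DG\cdot \tilde\phi)$ term after normal projection), and account for $\pr_t \Pi$ acting on $\mathbf{G}$.

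The main obstacle is bookkeeping rather than conceptual: carefully matching the connection terms (the normal connection $\nabla^\perp$ versus the ambient derivative, and the tangential-vs-normal decomposition of $\mathbf{G}$) so that the linearized terms land in exactly the form stated, and correctly handling $\pr_t \Pi$ — the time derivative of the normal projection along a moving submanifold — since this is where the $\tfrac12 V$ and the $A(x^T, V^T)$ pieces come from and sign errors are easy. The deferred computation in Appendix \ref{sec:evol-phi} presumably carries this out in full; here it suffices to set up the linearization and invoke the unforced identity $(\nabla_{\pr_t} - L)\phi = 0$ as the baseline.
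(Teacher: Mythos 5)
Your plan follows essentially the same route as the paper's Appendix \ref{sec:evol-phi}: combine the stationary Colding--Minicozzi identities for $L\mathbf{H}$, $LA_{ij}$ with the evolution of $A$ (hence $\mathbf{H}$) under a general normal velocity and with the spatial and time derivatives of the normal projection $\Pi$, then plug in the velocity $\phi+e^{-t/2}\mathbf{G}^\perp$ and use linearity so that the unforced part cancels and only the terms linear in $V=e^{-t/2}\mathbf{G}^\perp$ survive. That strategy is sound, but two points in your execution are off, and they are exactly the bookkeeping this lemma consists of; note also that deferring to ``the computation in Appendix \ref{sec:evol-phi}'' is circular here, since that appendix \emph{is} the proof being requested.

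First, your identification of the $V$-linear terms with the stated right-hand side is wrong as written: $V=e^{-t/2}\mathbf{G}^\perp$ is a \emph{normal} field, so $V^T=0$, not $e^{-t/2}\mathbf{G}^T$. The terms $\tfrac12 A(x^T,\mathbf{G}^T)$ and $-\tfrac12\nabla^\perp_{x^T}\mathbf{G}$ do not come from a tangential component of the velocity; they come from commuting the projection with the derivative, namely $\nabla^\perp_{x^T}(\mathbf{G}^\perp)=\nabla^\perp_{x^T}\mathbf{G}-A(x^T,\mathbf{G}^T)$, which is a consequence of the formula $(\nabla_i\Pi)(v)=-A(e_i,v^T)-e_j\langle A_{ij},v^\perp\rangle$ that the paper computes explicitly; the same commutation is needed to interpret the $\Lap\mathbf{G}^\perp$ term. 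Without carrying this out, the claimed term-by-term matching does not produce the stated formula. Second, in the ``In particular'' step you talk yourself out of precisely the identity you need: using the commutation above, the right-hand side of the first equation is exactly $L(e^{-t/2}\mathbf{G}^\perp)$, so $(\nabla_{\pr_t}-L)(\phi+e^{-t/2}\mathbf{G}^\perp)=L(e^{-t/2}\mathbf{G}^\perp)+\nabla_{\pr_t}(e^{-t/2}\mathbf{G}^\perp)-L(e^{-t/2}\mathbf{G}^\perp)=\pr_t(e^{-t/2}\mathbf{G}^\perp)$, and the final displayed line then follows from the product and chain rules, $\pr_t(\mathbf{G}^\perp)=(\pr_t\Pi)(\mathbf{G})+\Pi(D\mathbf{G}\cdot\pr_t x)$ with $\pr_t x=\phi+e^{-t/2}\mathbf{G}^\perp$, which is the part of your sketch that is correct. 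With these two repairs your outline becomes the paper's proof.
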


The proof of Lemma \ref{lem:evol-phi} is deferred to Appendix \ref{sec:evol-phi}. 
We now proceed to prove a mean value inequality: 

\begin{lemma}\label{lemma:meanvalue}
There exists $t_0$ such that the following holds: 

Let $\Sigma_t$ be a RMCFf on $[t_1,t_2]$, let $\beta\in(0,t_2-t_1)$, $0<R\leq R^{loc}_{t_1}$, $|A|\le M$ on $\Sigma_s\cap B_{R+1}$ for all $t\in[t_1,t_2]$.
Moreover, let $t_0\le t_1\le t_2$. Then, there exists a $C=C(n,K,M)$
 such that
\begin{align*}
\max_{t\in[t_1+\beta,t_2]}\|\phi\|^2_{L^2(\Sigma_s\cap B_R)}\le (C+\beta^{-1}) \int_{t_1}^{t_2}dt \int_{\Sigma_t \cap B_{R+1}} |\phi|^2\rho + C e^{-t_1} \max_{t\in [t_1,t_2]} F(\Sigma_t) .
\end{align*}
\end{lemma}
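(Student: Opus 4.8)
The plan is to run a standard parabolic mean value / energy argument for the quantity $|\phi|^2$ along the rescaled flow, using the evolution equation in Lemma \ref{lem:evol-phi} to control the inhomogeneity coming from the forcing term. First I would compute the evolution of $|\phi|^2\rho$ integrated over $\Sigma_t\cap B_R$ against a spatial cutoff $\eta$ which equals $1$ on $B_R$ and vanishes outside $B_{R+1}$. Using $(\nabla_{\pr_t}-L)\phi = e^{-t/2}(\cdots)$ from Lemma \ref{lem:evol-phi}, together with the self-adjointness of $\mathcal L$ with respect to the weight $\rho$ and the commutator terms $\langle\phi,A_{kl}\rangle A_{kl}$ and the $\tfrac12|\phi|^2$ term in $L$, one gets a differential inequality of Bochner type:
\begin{align*}
\pr_t \int_{\Sigma_t} \eta^2 |\phi|^2 \rho \leq -\int_{\Sigma_t}\eta^2|\nabla\phi|^2\rho + C(M)\int_{\Sigma_t}\eta^2|\phi|^2\rho + C\int_{\Sigma_t}|\nabla\eta|^2|\phi|^2\rho + (\text{forcing errors}),
\end{align*}
where the curvature bound $|A|\le M$ on $B_{R+1}$ is used to absorb the $\langle\phi,A_{kl}\rangle A_{kl}$ and Simons-type terms, and $\nabla\eta$ is supported in $B_{R+1}\setminus B_R$. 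The key point, exactly as in Colding–Minicozzi, is that $|\nabla\eta|^2|\phi|^2$ is still controlled by $\int_{\Sigma_t\cap B_{R+1}}|\phi|^2\rho$, so it can be tolerated on the right-hand side.

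The second step is to handle the forcing error terms. From the second identity in Lemma \ref{lem:evol-phi}, the inhomogeneity has size $O(e^{-t/2})$ times a bounded combination of $\mathbf{G}$, its derivatives, $A$, and $\phi$ itself (the $\Pi(DG\cdot(\phi+e^{-t/2}\mathbf{G}^\perp))$ term); using $\|\mathbf{G}\|_{C^k}\le K$ and $|A|\le M$, the contribution to $\pr_t\int\eta^2|\phi|^2\rho$ is bounded, after Cauchy–Schwarz, by $C\int_{\Sigma_t}\eta^2|\phi|^2\rho + Ce^{-t}\int_{\Sigma_t\cap B_{R+1}}\rho$. The area-type factor $\int_{\Sigma_t\cap B_{R+1}}\rho \le F(\Sigma_t)$ (since $\rho\ge0$) produces exactly the $Ce^{-t}\max_t F(\Sigma_t)$ term in the statement; here one needs $R\le R^{loc}_{t_1}$ so that $e^{-t/2}|x|$ stays bounded on the relevant region (this is where $R^{loc}_t = 2\sqrt{t+1}$ enters) and so that the $e^{-t/2}$ prefactors genuinely dominate. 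Collecting everything, with $g(t):=\int_{\Sigma_t}\eta^2|\phi|^2\rho$ and $A(t):=Ce^{-t}F(\Sigma_t)$, we arrive at $g'(t)\le C g(t) + A(t)$, possibly after discarding the good $-\int\eta^2|\nabla\phi|^2\rho$ term.

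The third step is the ODE comparison: for $t\in[t_1+\beta,t_2]$ and any $s\in[t_1,t_1+\beta]$, integrating $g'\le Cg + A$ from $s$ to $t$ gives $g(t)\le e^{C(t_2-t_1)}\big(g(s) + \int_{t_1}^{t_2}A\big)$; then averaging over $s\in[t_1,t_1+\beta]$ replaces $g(s)$ by $\beta^{-1}\int_{t_1}^{t_1+\beta}g\,ds \le \beta^{-1}\int_{t_1}^{t_2}\int_{\Sigma_t\cap B_{R+1}}|\phi|^2\rho$. Since the time interval over which Gronwall is applied has length at most $t_2-t_1$, one should be slightly careful that $C$ in the statement is allowed to depend on $t_2-t_1$ — reading the later uses (Theorem \ref{thm:extension} is applied with $t\in[T-1/2,T+1]$, so $t_2-t_1\le 3/2$), this is harmless, and in fact one can absorb the bounded factor $e^{C(t_2-t_1)}$ into $C=C(n,K,M)$ once we restrict to $t_2-t_1$ bounded (alternatively one keeps the $+\beta^{-1}$ shape by choosing the Gronwall exponent appropriately). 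This yields exactly
\begin{align*}
\max_{t\in[t_1+\beta,t_2]}\|\phi\|^2_{L^2(\Sigma_t\cap B_R)} \le (C+\beta^{-1})\int_{t_1}^{t_2}\!\!dt\int_{\Sigma_t\cap B_{R+1}}|\phi|^2\rho + Ce^{-t_1}\max_{t\in[t_1,t_2]}F(\Sigma_t).
\end{align*}

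\textbf{Main obstacle.} The genuinely delicate point is the bookkeeping of the forcing error terms in $(\nabla_{\pr_t}-L)(\phi+e^{-t/2}\mathbf{G}^\perp)$: one must verify that every term is either $O(e^{-t/2})$ with a factor controlled by $|A|\le M$, $K$, or integrable area, or else is linear in $\phi$ with a bounded coefficient (so it can go into the Gronwall term), and in particular that the $x^T$-weighted terms like $\tfrac12 A(x^T,\mathbf{G}^T)$ and $\tfrac12\nabla^\perp_{x^T}\mathbf{G}$, when multiplied by $e^{-t/2}$, remain bounded on $B_R$ with $R\le R^{loc}_{t_1}=2\sqrt{t_1+1}$ — i.e. $e^{-t/2}R^{loc}_t\to0$ is again the mechanism that makes the forcing negligible. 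Once this is organized, the Bochner computation and the Gronwall step are routine.
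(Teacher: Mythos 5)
Your overall scheme --- a cutoff energy quantity $g(t)=\int_{\Sigma_t}\eta^2|\cdot|^2\rho$, a differential inequality for $g$, and replacing the initial value by an average over $[t_1,t_1+\beta]$ --- is the same as the paper's, but two points in your bookkeeping do not go through as written. First, you run the argument on $\phi$ itself and assert that the inhomogeneity $e^{-t/2}(\Delta\mathbf{G}^\perp+\dots)$ from the first identity of Lemma \ref{lem:evol-phi} is controlled by $K$ and $|A|\le M$ after Cauchy--Schwarz. It is not: since $\mathbf{G}^\perp=\Pi(\mathbf{G})$ and $\nabla\Pi\sim A$, the term $\Delta\mathbf{G}^\perp$ contains a contribution of the schematic form $\nabla A\ast\mathbf{G}$, and the lemma hypothesizes only $|A|\le M$, not $|\nabla A|$. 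To salvage your route you would have to integrate that term by parts against $\eta^2\rho\,\phi$ (absorbing the resulting $\nabla\phi$ into the good $-\int\eta^2|\nabla\phi|^2\rho$ term, at the cost of further cutoff and $x$-weighted terms), or add a $|\nabla A|$ bound to the hypotheses. The paper sidesteps this entirely by running the energy argument on the velocity $\tilde\phi=\phi+e^{-t/2}\mathbf{G}^\perp$, whose evolution (second identity of Lemma \ref{lem:evol-phi}) involves no $\Delta\mathbf{G}^\perp$, and only converting between $\|\tilde\phi\|_{L^2}$ and $\|\phi\|_{L^2}$ at the very end via the squared triangle inequality; it is this conversion, not error terms in the evolution equation, that produces the $Ce^{-t_1}\max_t F(\Sigma_t)$ term in the statement.

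Second, your Gronwall step produces a factor $e^{C(t_2-t_1)}$, so your constant depends on $t_2-t_1$, whereas the lemma claims $C=C(n,K,M)$ for arbitrary $t_1<t_2$ (you are right that in the application $t_2-t_1\le 2$, but the statement is stronger and the exponential is unnecessary). The paper avoids Gronwall altogether: the right-hand side of the lemma already contains $\int_{t_1}^{t_2}dt\int_{\Sigma_t\cap B_{R+1}}|\cdot|^2\rho$, which is exactly $\int_{t_1}^{t_2}g$, so from $g'\le Cg$ one writes $g(t)=g(t_*)+\int_{t_*}^{t}g'\le\beta^{-1}\int_{t_1}^{t_1+\beta}g+C\int_{t_1}^{t_2}g$, where $t_*$ is a minimum point of $g$ on $[t_1,t_1+\beta]$; no exponentiation is needed and the constant is independent of $t_2-t_1$. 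With these two corrections your argument essentially reproduces the paper's proof.
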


Note that for our mean value inequality we follow the approach of \cite[Proof of Theorem 7.4]{colding2019regularity} instead of \cite[Lemma 5.32]{colding2015uniqueness}.

\begin{proof}
Let $0\leq \eta\leq 1$ be a cutoff function supported on $B_{R+1}$ with $\eta=1$ on $B_R$ and $|D \eta|\le2$. For the purposes of this proof define $\tilde{\phi} := (\phi + e^{-t/2} \mathbf{G}^\perp)|_{\Sigma_t}$ and note that this is the velocity of the flow. 

Consider $g(t)=\int_{\Sigma_t}|\tilde{\phi}|^2\eta^2e^{\frac{-|x|^2}4}$.

Arguing similarly to \cite{colding2015uniqueness} we have 
\begin{equation}
\begin{split}
\label{eq:meanvalue-1}
 g'(t) &= \int_{\Sigma_t\cap B_{R+1}} \rho\left(\eta^2 (\pr_t - \mathcal{L})|\tilde{\phi}|^2 + \eta^2 \mathcal{L}|\tilde{\phi}|^2 + |\tilde{\phi}|^2 \langle D\eta^2 , \tilde{\phi}\rangle -\eta^2 \langle \phi,\tilde{\phi}\rangle |\tilde{\phi}|^2 \right)
\\& = \int_{\Sigma_t\cap B_{R+1}} \rho\left(\eta^2 (\pr_t - \mathcal{L})|\tilde{\phi}|^2 -\langle \nabla \eta^2,\nabla |\tilde{\phi}|^2\rangle + |\tilde{\phi}|^2 \langle D\eta^2 , \tilde{\phi}\rangle -\eta^2 \langle \phi,\tilde{\phi}\rangle |\tilde{\phi}|^2 \right).
\end{split}
\end{equation}

We estimate each of these four terms individually. First, since $(\pr_t-L)\tilde{\phi}=0$ we have 
\begin{equation}
\label{eq:meanvalue-2}
(\pr_t -\mathcal{L})|\tilde{\phi}|^2 = 2\langle \tilde{\phi}, (\pr_t-\mathcal{L})\tilde{\phi}\rangle - 2|\nabla \tilde{\phi}|^2 = (2|A|^2+1)|\tilde{\phi}|^2 - 2|\nabla \tilde{\phi}|^2.
\end{equation}
For the second we estimate 
\begin{equation}
\label{eq:meanvalue-3}
4\eta |D\eta| |\tilde{\phi}| |\nabla \tilde{\phi}| \leq \eta^2 |\nabla \tilde{\phi}|^2 + 4|\tilde{\phi}|^2 |D\eta|^2.\end{equation} 
For the third, we use \begin{equation}
\label{eq:meanvalue-4}
2|\tilde{\phi}|^3 \eta |D\eta| \leq \frac{1}{2}\eta^2|\tilde{\phi}|^2  +2|\tilde{\phi}|^2 |D\eta|^2.\end{equation}
Finally, we have 
\begin{equation}
\label{eq:meanvalue-5}
-\eta^2 \langle \phi,\tilde{\phi}\rangle|\tilde{\phi}|^2 = -\eta^2 |\tilde{\phi}|^4 + e^{-t/2} \eta^2 \langle \mathbf{G}^\perp, \tilde{\phi}\rangle |\tilde{\phi}|^2 \leq -\eta^2 |\tilde{\phi}|^4 + \frac{1}{2}Ke^{-t/2} (|\tilde{\phi}|^2 + |\tilde{\phi}|^4). 
\end{equation}
Combining (\ref{eq:meanvalue-1}-\ref{eq:meanvalue-4}) and using $|D\eta|\leq 2$, $|A|\leq M$ then gives 
\begin{equation*}
\begin{split}
g'(t) &\leq  \int_{\Sigma_t} \rho\left(\frac{1}{2}(Ke^{-t/2}-1)\eta^2 |\tilde{\phi}|^4 -|\nabla\tilde{\phi}|^2 +( 2M+1 + \frac{1}{2}Ke^{-t/2}) \eta^2|\tilde{\phi}|^2 + 6|D\eta|^2 |\tilde{\phi}|^2 \right)
\\& \leq C(M,K) \int_{\Sigma_t\cap B_{R+1}} \eta^2|\tilde{\phi}|^2\rho,
\end{split}
\end{equation*}
as long as $t_0$ is so large that $Ke^{-t_0/2} \leq 1$. Take $t_*\in[t_1,t_1+\beta]$ so that $g(t_*) = \min_{t\in [t_1,t_1+\beta]} g(t)$. Then for $t\in[t_1+\beta,t_2]$ we have
\begin{equation*}
\begin{split}
g(t) &= g(t_*) + \int_{t_*}^{t} g'(\tau) d\tau \leq  \frac{1}{\beta}\int_{t_1}^{t_1+\beta} g(\tau) d\tau + C \int_{t_1}^{t} d\tau \int_{\Sigma_\tau\cap B_{R+1}} \eta^2|\tilde{\phi}|^2\rho
\\& \leq (C+\beta^{-1}) \int_{t_1}^{t} d\tau \int_{\Sigma_\tau\cap B_{R+1}} \eta^2|\tilde{\phi}|^2\rho. 
\end{split}
\end{equation*}
By the squared triangle inequality we now have 
\begin{equation*}
\begin{split}
\max_{t\in[t_1+\beta,t_2]}\|\phi\|^2_{L^2(\Sigma_s\cap B_R)} &- K^2 e^{-t} \max_{t\in [t_1+\beta,t_2]} F(\Sigma_t)^2 \leq 2 \max_{t\in[t_1+\beta,t_2]}\|\tilde{\phi}\|^2_{L^2(\Sigma_t\cap B_R)} 
\\& \leq 4(C+\beta^{-1}) \int_{t_1}^{t_2} dt \int_{\Sigma_t\cap B_{R+1}} \eta^2|\tilde{\phi}|^2\rho + 4(C+\beta^{-1}) K^2\int_{t_1}^{t_2} e^{-t}F(\Sigma_t)^2dt,
\end{split}
\end{equation*}
which implies the result. 
\end{proof}

Note that by the almost monotonicity (Section \ref{sec:almost-monotonicity}), we can assume $F(\Sigma_t) \leq \lambda_0$ for $t\geq t_0$.

\subsection{Short time stability of the cylinder}\label{SS:cylinder}

We need a short-time stability result for solutions of forced MCF. Specifically, we consider MCF with forcing as a parabolic system on the normal bundle over a base submanifold $\Sigma$. Intuitively, if the velocity of the flow is bounded (in $C^{2,\alpha}$, say) then the solutions must stay close to $\Sigma$ in a parabolic neighbourhood.
Even though the proof below of this lemma is elementary it is one of the main ingredients to extend our graphical scale in Theorem \ref{thm:extension}.
Moreover, it is the only `parabolic ingredient' of the proof.

\begin{lemma}
\label{lem:unif-stability}
Let $\Sigma \subset \mathbb{R}^N$ be a complete submanifold with uniformly bounded geometry, so that $\sup_\Sigma \sum_{j=0}^3 |\nabla^j A^\Sigma| <\infty$. There exists $R_0$ such that for every $R>R_0$, $\epsilon>0$ and $C_0>0$, there are $\delta_3>0$ and $\gamma>0$ such that if $M_t$ is a MCF with forcing term $\mathbf{F}$ satisfying
\begin{itemize}
\item $B_{R+2}\cap M_{-1}$ is a normal graph $U$ over $\Sigma\in \mathcal C_k$ with $\|U\|_{C^{2,\alpha}}\le\delta_3$;
\item $|A|+|\nabla A|+|\nabla^2A|+|\nabla^3A|+|\mathbf{F}|+|D \mathbf{F}|+|D^2 \mathbf{F}|+|D^3\mathbf{F}|\le C_0$ on $B_{R+2}\cap M_t$ for $t\in[-1-\frac1{C_0},-1+\frac1{C_0}]$;
\end{itemize}
then for each $t\in[-1-\gamma,-1+\gamma]$, we have that $B_R\cap M_t$ is a normal graph over $\sqrt{-t}\Sigma$ with $C^{2,\alpha}$ norm at most $\epsilon$.
\end{lemma}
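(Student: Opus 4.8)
\textbf{Proof strategy for Lemma \ref{lem:unif-stability}.}

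The plan is to argue by contradiction and compactness, converting the uniform bounds into a limiting situation where closeness to $\sqrt{-t}\,\Sigma$ is automatic. Suppose the statement fails. Then for some fixed $R>R_0$, $\epsilon>0$, $C_0>0$ we obtain a sequence of MCFs with forcing $M^k_t$ and forcing terms $\mathbf{F}^k$ satisfying the two hypotheses with $\delta_3=1/k$ and with $\gamma=1/k$, but for which there is $t_k\in[-1-1/k,-1+1/k]$ such that $B_R\cap M^k_{t_k}$ fails to be a normal graph over $\sqrt{-t_k}\,\Sigma$ with $C^{2,\alpha}$-norm at most $\epsilon$. Since the curvature bound holds on the time interval $[-1-1/C_0,-1+1/C_0]$, which contains $[-1-1/k,-1+1/k]$ for $k$ large, the sequence $M^k_t$ has uniformly bounded geometry on $B_{R+2}$ for $t$ in a fixed neighbourhood of $-1$, and the forcing terms are uniformly bounded in $C^3$ there.

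Next I would pass to a limit. Write $M^k_t$ as a normal graph of a time-dependent normal vector field $V^k(\cdot,t)$ over the \emph{fixed} base $\Sigma$, valid at least on $B_{R+2}\cap M^k_t$; the initial bound $\|V^k(\cdot,-1)\|_{C^{2,\alpha}}\le 1/k$ together with the forced MCF equation, viewed as a quasilinear parabolic system for $V^k$ on the normal bundle of $\Sigma$ (with coefficients controlled by the bounded geometry of $\Sigma$ and by the $C^3$ bounds on $A$ and $\mathbf{F}$), gives, via interior parabolic Schauder estimates, uniform $C^{2,\alpha}$ (indeed higher) bounds for $V^k$ on a slightly smaller parabolic neighbourhood $B_{R+1}\times[-1-\gamma_0,-1+\gamma_0]$ for a fixed $\gamma_0>0$. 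By Arzel\`a--Ascoli and a diagonal argument, a subsequence of $V^k$ converges in $C^{2}$ on $B_R\times[-1-\gamma_0/2,-1+\gamma_0/2]$ to a limit $V^\infty$. The initial condition forces $V^\infty(\cdot,-1)\equiv 0$, and passing the forced MCF equation to the limit — here the forcing contributes a bounded inhomogeneous term which does \emph{not} vanish, but that is fine — shows $V^\infty$ solves the \emph{same} parabolic system with zero initial data. By uniqueness for this quasilinear parabolic system (standard since the coefficients are smooth and the equation is uniformly parabolic near $V=0$, and the solution $V^\infty$ has bounded geometry), $V^\infty$ coincides with the solution representing the flow $\sqrt{-t}\,\Sigma$ itself, which is precisely the normal graph describing the unforced MCF induced by $\Sigma$; equivalently, after subtracting off the known graph of $\sqrt{-t}\,\Sigma$ over $\Sigma$, the ``difference'' field tends to zero. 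Hence for $k$ large, $B_R\cap M^k_{t_k}$ is a normal graph over $\sqrt{-t_k}\,\Sigma$ with $C^{2,\alpha}$-norm much smaller than $\epsilon$, contradicting the choice of $M^k$.

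The main obstacle I expect is bookkeeping the two base submanifolds: the flow is compared to the \emph{moving} target $\sqrt{-t}\,\Sigma$, but it is most convenient to set up the PDE over the \emph{fixed} $\Sigma$; one must check that the map $\sqrt{-t}\,\Sigma\to\Sigma$ (and the associated change of normal graph representation) is uniformly bi-Lipschitz in $C^{2,\alpha}$ for $t$ near $-1$ on $B_{R+2}$, using the bounded geometry of $\Sigma$, and that this reparametrisation does not eat up the smallness. Also one must verify that the graphical representation does not degenerate on the whole parabolic neighbourhood — this is where $R_0$ enters: the injectivity radius and curvature bounds of $\Sigma$ must be large enough (uniformly in the relevant region) that a normal graph of small $C^{2,\alpha}$-norm stays embedded and graphical, and that the curvature bound on $M^k_t$ propagates the graphicality over $B_{R+2}$ rather than just at $t=-1$; this last point uses the uniform curvature hypothesis directly, so it is really an input rather than something to be proven. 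The parabolic interior estimates and uniqueness are standard, so the genuine content is the elementary but careful transfer between the fixed and moving bases.
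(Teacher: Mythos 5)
There is a genuine gap at the step where you identify the limit. After extracting $V^\infty$, you claim that by uniqueness for the parabolic system with zero initial data at $t=-1$, $V^\infty$ coincides with the graph representing the unforced flow $\sqrt{-t}\,\Sigma$. This does not follow: the forcing terms $\mathbf{F}^k$ are only assumed \emph{bounded} in $C^3$, so along a subsequence they converge to some limit $\mathbf{F}^\infty$ which in general is nonzero, and the limit $V^\infty$ then solves the \emph{forced} equation with forcing $\mathbf{F}^\infty$. The solution of that system with initial data $\Sigma$ at $t=-1$ is not $\sqrt{-t}\,\Sigma$ (its velocity differs by $(\mathbf{F}^\infty)^\perp$), so the uniqueness argument identifies the wrong flow; you acknowledge that the inhomogeneous term survives in the limit, but the conclusion you draw from uniqueness ignores it. In addition, uniqueness from initial data at $t=-1$ only controls $t\ge -1$, whereas the conclusion (and your bad times $t_k$) may lie in $[-1-\gamma,-1)$; backward uniqueness for a quasilinear parabolic system is a genuinely harder statement and is not available off the shelf. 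As written, the final contradiction is therefore not justified.

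The argument can be repaired, but the repair makes the compactness and uniqueness machinery unnecessary: since you took $\gamma=1/k$, the bad times satisfy $t_k\to -1$, so all you need is $\|V^k(\cdot,t_k)\|_{C^{2,\alpha}(B_R)}\to 0$, i.e. equicontinuity in time at $t=-1$ together with the initial smallness $\|V^k(\cdot,-1)\|_{C^{2,\alpha}}\le 1/k$, plus the elementary fact that $\sqrt{-t_k}\,\Sigma\to\Sigma$ on $B_{R+2}$ (the dilation moves points of $B_{R+2}$ by at most $|\sqrt{-t_k}-1|(R+2)$). That equicontinuity is exactly what the paper proves, directly and quantitatively: the hypothesised bounds on $|A|,\dots,|\nabla^3A|$ and $|\mathbf{F}|,\dots,|D^3\mathbf{F}|$ bound the velocity $\partial_t x=\mathbf{H}+\mathbf{F}^\perp$ and the time derivative of the normal projection, hence give $|\partial_t U|+|\partial_t\nabla U|+\dots\le C_1$ for the graph function over $\Sigma$ on a short time interval; integrating in time (in either time direction) and choosing $\delta_3,\gamma$ small, with interpolation against the uniform third-order bounds to pass from $C^2$ to $C^{2,\alpha}$, yields the conclusion with no contradiction argument, no Schauder theory and no uniqueness statement. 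This direct velocity estimate is the one ingredient your proposal implicitly assumes (when asserting that graphicality propagates) but never actually uses to close the argument, and it is where the real content of the lemma lies.
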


\begin{proof}
Since $|A|$ and $|\mathbf{F}|$ are bounded, the MCF with forcing equation implies that $|\partial_tx|$ is also bounded. 
Likewise, the bounds on $|D \mathbf{F}|$ and $|\nabla A|$ (and thus on $|\nabla \mathbf{H}|$) implies that also $|\partial_t\Pi|$ is uniformly bounded, where $\Pi$ is the projection onto the normal bundle.
Combining these bounds, it follows that $B_{R+1}\cap M_t$ remains graph over $\Sigma$ of a normal vector field $U$ with uniform bounds
\begin{align*}
|\partial_t U|+|\partial_t\nabla U|\le C_1\quad\text{for $t\in[-1-\theta_2,-1+\theta_2]$},
\end{align*}
where $\theta_2>0$ and $C_1$ depends on $C_0,\epsilon,n$.
The higher order bounds follow in the same fashion.
\end{proof}

\subsection{Proof of the extension step}\label{SS:proof}

We proceed with the main theorem of this section.
As mentioned at the beginning of the section, we first establish curvature estimates backward in time, then show that the speed of the flow is small which then allows us to extend the cylindrical scale by the short time stability of the cylinder.

In our presentation of this subsection, we adopt some clarifications based, in part, on notes of Mantoulidis \cite{Mantoulidis}, which the reader may also find helpful.

\begin{proof}[Proof of Theorem \ref{thm:extension}]

\textbf{Step 1: Curvature bounds on a larger time interval.} First, we establish some curvature bounds on some extended scale backwards in time.
Let $A_0$ be the curvature of the cylinder $\Gamma$. 
We choose $\epsilon_3$ so that the curvature of any surface $\Gamma'$ which is $(C^{2,\alpha},\epsilon_3)$-close to $\Gamma$ has curvature at most $A_0+1$. 
Let $\delta_2$ be the constant from Proposition \ref{thm:curvature}.
Take a constant $\tau \leq \frac{1}{100}$ which is small enough that $(A_0+1)^2\leq\frac{\delta_2}\tau$. 
Then in particular, $|A|^2 \leq \delta_2 /\tau$ for each point in $\Sigma_t \cap B_R$, $t\in [T-1/2,T+1]$. 

Now using that $R+2\le 3r_0 e^{T/2}$ and $R\le R_T$, we have by definition of shrinker scale
\begin{align*}
\int_{T-1}^{T+1} dt \int_{\Sigma_t \cap B_{R+2}} |\phi|^2 \rho \leq  \int_{T-1}^{T+1} \|\phi\|_{L^2(\Sigma_t \cap B_{3r_0 e^{t/2}})}^2 dt  =  e^{-R_T^2/2}.
\end{align*}

Moreover, by choosing $t_0$ and $R$ sufficiently large we can ensure that
\begin{align*}
e^{-R_T^2/2} < \frac{\mu_2^2 e^{-(R+2)^2/4}}{2R^2}
\end{align*}
where $\mu_2$ is the small constant from Proposition \ref{thm:curvature}.

Let $\sigma$ be as in the statement of Proposition \ref{thm:curvature}. We apply that proposition at all $x \in B_{R-\sigma}$ with $t_2=T+1$. This will give curvature estimates at times $t\in [t_1 - \log(1-7\tau/8), t_1-\log(1-\tau)]$, for any $t_1 \leq T+1-\tau$. In particular, we conclude that for each $l$, 
\begin{align*}
\sup_{B_{R_1} \cap \Sigma_t} (|A|^2+ \tau^l |\nabla^l A|^2) \leq C_l/\tau,
\end{align*} 
for any $t\in [T-1-\log(1-7\tau/8), T+1-\tau + \log(1-\tau)]$, where $R_1= \frac{\sqrt\tau}3+(1-\tau)^{-\frac12}(R-\sigma)$. Choosing $R_0\gg1$ sufficiently large, we have $R_1\ge (1+\kappa)R$ for some $\kappa \in (0,\sqrt{2}-1)$.
Hence we have obtained a curvature estimate on a larger time interval $[T-3/4,T+1+\gamma]$ and a larger ball $B_{(1+\kappa)R}$. Here $\gamma=\gamma(\tau)>0$. 

\

\textbf{Step 2: Cylindrical estimates on a larger time interval.} Having established curvature bounds, by the mean value inequality (Lemma \ref{lemma:meanvalue}) 
and the definition of shrinker scale (\ref{eq:shrinker}), for $t\in [T-7/8,T+1]$ we now have
\begin{align*}
\|\phi\|_{L^2(B_{(1+\kappa) R}\cap \Sigma_t )}^2 \leq C\int_{T-1}^{T+1} \|\phi\|_{L^2(\Sigma_t \cap B_{3e^{t/2}r_0})}^2 dt + C_g \lambda_0 e^{-T/2} =Ce^{-R_T^2/2} + C_g \lambda_0 e^{-T/2}.
\end{align*}

By interpolation (cf. \cite[Appendix B]{colding2015uniqueness} or \cite[Appendix]{zhu2020ojasiewicz}), the curvature bounds, and since $R< R_T$, for $t$ in the time interval $[T-3/4,T+1]$, we have
\begin{align}
\label{eq:step-2-estimate}
\begin{split}
\|\phi\|_{C^{2,\alpha}(B_{(1+\kappa)R-1}\cap \Sigma_t)} \leq &C (e^{(1+\kappa)^2 R^2/8} \|\phi\|_{L^2(B_{(1+\kappa)R}\cap \Sigma_t)})^{1-\delta_l} \\
\leq& C(Ce^{(1+\kappa)^2 R^2/8}e^{-R_T^2/4} + e^{(1+\kappa)^2R^2/8} e^{-T/2})^{1-\delta_l}\leq C.
\end{split}
\end{align}
Note that we have used $R<R^{loc}_t$ to control the term $e^{(1+\kappa)^2R^2/8} e^{-T/2}$.

 Since $\nabla^lA$ is bounded and $\mathbf{G}$ is bounded, $\mathbf{G}^\perp$ is also bounded in $C^{2,\alpha}(B_{R})$ for $t\in [T-3/4,T+1]$. 
Hence the velocity $\phi+e^{-\frac t2}\mathbf{G}^\perp$ of the rescaled flow is bounded.
Thus, the initial $(C^{2,\alpha},\epsilon_3)$-closeness to $\Gamma$ on $ B_R$, for $t\in [T-1/2,T+1$], extends to give $(C^{2,\alpha},2\epsilon_3)$-closeness on $B_{R-1}$, for $t\in [T-1/2-\xi,T+1]$. 
Here $\xi>0$ depends only on $\epsilon_3$, $\|\mathbf{G}^\perp\|_{C^{2,\alpha}}$ and $\|\phi\|_{C^{2,\alpha}}$. 

\textbf{Step 3: Cylindrical estimates on a larger scale.} We have now established curvature bounds on $\Sigma_t\cap B_{(1+\kappa)R}$, $t\in [T-1/2-\xi, T+1+\gamma]$, and cylindrical estimates on $B_R$ for $t\in [T-1/2-\xi, T+1]$. Take $C_0 > \max(1/\xi,1/\gamma)$ and let $\mu$ be as in the short-time stability Lemma \ref{lem:unif-stability}. We also may choose $\delta<\delta_3/2$. 

Then for any fixed $t\in [T-1/2 - \theta, T+1]$, we may apply that lemma to the MCFf starting from $\Sigma_{t}$; the conclusion at time $t+\mu$ (translated to RMCFf) implies that $\Sigma_{t+\mu}$ is $(C^{2,\alpha},2\epsilon_2)$-close to $\Gamma$ on $B_{(1+\mu)R}$. In particular, this establishes the desired cylindrical estimates on $B_{(1+\mu)R}$ for any $t\in [T-1/2,T+1]$. 
\end{proof}


\section{Shrinker scale and cylindrical scale}
\label{sec:scale-comparison}

\subsection{Improvement step}

The following {\L}ojasiewicz inequality follows from the work of Colding-Minicozzi. Note that the {\L}ojasiewicz inequality is purely a statement about submanifolds, and does not explicitly involve any flow. 

\begin{theorem}[\cite{colding2019regularity}]
There exists $\epsilon_2>0$ such that given $\epsilon_1>0$, $\lambda_0$, $\gamma>0$ and $\beta, \bar{\beta},\kappa <1$, there exist $R_0, l>0$ and $C_{\beta,\bar{\beta},\kappa}$ such that if $\Sigma^n\subset \mathbb{R}^N$ has $\lambda(\Sigma)\leq \lambda_0$ and:
\begin{enumerate}
\item For some $R>R_0$, we have that $B_R\cap \Sigma$ is a $C^{2,\alpha}$ normal graph $U$ over some cylinder with $\|U\|_{C^{2,\alpha}(B_R)} \leq \epsilon_2$;
\item $|\nabla^j A| \leq C_j$ on $B_R\cap \Sigma$ for all $j \leq l$;
\end{enumerate}
then $B_{(1-\gamma)R_1}\cap \Sigma$ is a graph $V$ over some (possibly different) cylinder with $\|V\|_{C^{2,\alpha}} \leq \epsilon_1$ and $\|V||_{L^2(B_{(1-\gamma)R_1})} \leq e^{-(1-\gamma)^2\frac{R_1^2}{4}} $, where 
\begin{align}\label{5.1}
R_1 = \max \left\{ r\leq R-1 \,\Big| \,  C_{\beta,\bar{\beta},\kappa} \left( \|\phi_U\|_{L^2}^\frac{6\bar{\beta}}{3+\kappa} + \|\phi_U\|_{L^1}^{\bar{\beta}} + \|U\|_{L^2}^\frac{6}{3-\kappa} + R^{n-2} e^{-R^2/4} \right)\leq e^{-r^2/4} \right\}.
\end{align}

\end{theorem}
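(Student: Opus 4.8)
The inequality is a statement about a single submanifold $\Sigma$ — the forcing field $\mathbf{F}$ plays no role — so my plan is not to reprove it but to deduce it directly from the {\L}ojasiewicz inequality near the round cylinder of Colding-Minicozzi \cite{colding2015uniqueness, colding2019regularity}. The main task is therefore to verify that our hypotheses supply precisely the inputs their argument uses: the $C^{2,\alpha}$-closeness to a cylinder on $B_R$ with small constant $\epsilon_2$, the higher-order curvature bounds $|\nabla^j A|\le C_j$ on $B_R$ for $j\le l$, and bounds on Gaussian area ratios on $B_R$. The last of these is where I would be most careful: Colding-Minicozzi obtain it from the global entropy bound $\lambda(\Sigma)\le\lambda_0$, whereas in the localized setting of the later sections the entropy need not be globally controlled; however, their proof only ever uses the consequent area-ratio bounds, and these are available on $B_R$ for $R<R^{loc}$ from Corollary \ref{cor:area-bound}. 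Once the hypotheses are matched, the conclusion — including the implicitly defined scale \eqref{5.1} — is theirs verbatim.

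For the reader it is worth recalling the shape of their proof. One considers the nonlinear map $U\mapsto\phi_U$ taking a normal graph function over the model cylinder $\Gamma$ to its shrinker mean curvature; its linearization at $\Gamma$ is the Jacobi-type operator $L=\mathcal L+\tfrac12+\langle\cdot,A_{kl}\rangle A_{kl}$ of Section \ref{SS:meanvalue}. Separation of variables on the cylinder (spherical harmonics on the sphere factor times Hermite functions on the Euclidean factor) exhibits the spectrum of $L$: a finite-dimensional space of unstable and neutral modes — radial dilation of the sphere factor, together with the ambient translations and rotations — above which $L$ has a spectral gap. The argument then runs a dichotomy. If the projection of $U$ onto the unstable/neutral modes is comparable to $\|U\|$, one is effectively finite-dimensional, and a classical real-analytic {\L}ojasiewicz inequality for $F$ on the family of near-cylinders, combined with the definite sign of the radial second variation, gives the bound at once. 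If instead that projection is negligible, then $\Sigma$ agrees with a cylinder to high order on $B_R$; here $U$ nearly solves $LU\approx 0$, so weighted Schauder estimates together with the Gaussian weight propagate the smallness of $U$ outward over comparable scales.

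Iterating that propagation until the accumulated error first surpasses the Gaussian weight $e^{-r^2/4}$ is what produces the scale $R_1$ of \eqref{5.1}; the exponents $\tfrac{6\bar\beta}{3+\kappa}$, $\bar\beta$ and $\tfrac{6}{3-\kappa}$ come out of interpolating the $L^2$, $L^1$ and $C^{2,\alpha}$ norms of $\phi_U$ and $U$ against the weighted elliptic estimates, with $\beta,\bar\beta,\kappa<1$ the free interpolation parameters, while the term $R^{n-2}e^{-R^2/4}$ is the contribution of the cutoff near $\partial B_R$. I would also keep in mind that the fitted cylinder is allowed to change — its axis and radius — since the dichotomy can move $\Sigma$ onto a nearby but distinct cylinder; controlling the drift of the best-fit cylinder is part of the Colding-Minicozzi argument and is already reflected in the statement (``some (possibly different) cylinder'').

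Because the inequality is being quoted, I do not expect a genuine obstacle here: the hard analytic work — the spectral theory on the noncompact cylinder, the weighted Schauder estimates, and the delicate tracking of constants so that $R_1<R$ while $R_1$ stays large — lives in \cite{colding2015uniqueness, colding2019regularity}. The one point I would flag for the downstream application is that hypotheses (1)--(2) here coincide with conclusions (1) and (3) of the extension step Theorem \ref{thm:extension} (with the two occurrences of $\epsilon_2$ identified), so that improvement and extension can be alternated; carrying that alternation through and solving the resulting recurrence is the content of the scale comparison theorem of Section \ref{sec:scale-comparison}, which is the real work that remains.
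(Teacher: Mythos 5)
Your overall move is the same as the paper's: the result is quoted from Colding--Minicozzi, and the proof in the paper consists exactly of pointing to the proof of part (2) of \cite[Theorem 7.4]{colding2019regularity} and applying \cite[Proposition 4.47]{colding2019regularity} at the largest admissible scale $R_1$ (the worry you raise about entropy versus local area bounds is moot here, since the hypothesis is the global bound $\lambda(\Sigma)\le\lambda_0$). However, your assertion that the conclusion, ``including the implicitly defined scale \eqref{5.1}, is theirs verbatim'' is where the one genuine step is skipped. Colding--Minicozzi's Theorem 7.4 is run in the flow setting, where bounds on $\phi$ (the quantities $|\phi|_2$ and $|\nabla\tau|^2_1$ needed to invoke Proposition 4.47) are available from the evolution; in the present statement no bound on $\phi$ is assumed. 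To apply Proposition 4.47 one must therefore manufacture the required $W^{1,2}$- and $W^{2,1}$-type bounds on $\phi_U$ by interpolating its $L^2$ and $L^1$ norms against the curvature bounds of hypothesis (2), i.e. $\|\phi_U\|_{W^{1,2}}\le\|\phi_U\|_{L^2}^{\bar\beta}$ and $\|\phi_U\|_{W^{2,1}}\le\|\phi_U\|_{L^1}^{\bar\beta}$; this is precisely why the exponents $\tfrac{6\bar\beta}{3+\kappa}$ and $\bar\beta$ appear in \eqref{5.1}, and without this step the application of Proposition 4.47 at scale $R_1$, and hence the stated form of $R_1$, is not justified.

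A secondary caution: your sketch of the internal Colding--Minicozzi mechanism (finite-dimensional unstable/neutral projection versus a spectral-gap regime, a classical real-analytic {\L}ojasiewicz inequality on the family of near-cylinders, weighted Schauder propagation) is not an accurate description of their argument. Avoiding exactly that Lyapunov--Schmidt/analyticity reduction on the noncompact cylinder is the point of their method; the inequalities are proved by direct quantitative estimates on large balls (graphical and cylindrical estimates, control of the axis, cutoff estimates of the type \cite[Lemma 7.16]{colding2019regularity}, and interpolation), which is also where the Gaussian factor $e^{-r^2/4}$ and the cutoff term $R^{n-2}e^{-R^2/4}$ in \eqref{5.1} come from. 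Since you are citing rather than reproving, this does not break your argument, but it should not be recorded as a description of the quoted proof.
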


\begin{proof}
This essentially follows from the proof of (2) in the proof of \cite[Theorem 7.4]{colding2019regularity}; the point is to apply \cite[Proposition 4.47]{colding2019regularity} on the largest scale possible $R_1$. The only difference is that we do not assume explicit bounds on $\phi$, so the statement includes the interpolated terms $\|\phi_U\|_{W^{1,2}} \leq \|\phi_U\|_{L^2}^{\bar{\beta}}$ and $\|\phi_U\|_{W^{2,1}} \leq \|\phi_U\|_{L^1}^{\bar{\beta}}$.
More precisely, to apply Proposition 4.47 in \cite{colding2019regularity}, we need to have bounds on $|\phi|_2$ and $|\nabla\tau|^2_1$ which is shown in the proof of Theorem 7.4.
In particular, in our case we obtain the bounds from equation (7.23) without further estimating $\phi$. 
\end{proof}

The above may be used to prove the following scale improvement theorem:

\begin{theorem}[\cite{colding2019regularity}]
\label{thm:improvement}
There exists $\epsilon_2>0$ such that given $\epsilon_1>0$, $\mu>0$, $\lambda_0$, there exist $R_0, l_0>0$ and $\theta\in(0,\mu)$ such that if $\Sigma^n\subset \mathbb{R}^N$ has $\lambda(\Sigma)\leq \lambda_0$ and:
\begin{enumerate}
\item For some $R_0\leq R\leq R_*$, we have that $B_R\cap \Sigma$ is a $C^{2,\alpha}$ graph $U$ over some cylinder with $\|U\|_{C^{2,\alpha}(B_R)} \leq \epsilon_2$ and $\|U\|_{L^2(B_R)}^2\leq C_n \lambda_0 R^{n-2} e^{-\frac{R^2}{4(1+\mu)^2}}$; 
\item $\|\phi\|_{L^2(B_R\cap\Sigma)}^2 \leq C_2 e^{-R_*^2/2}$; 
\item $|\nabla^l A| \leq C_l$ on $B_R\cap \Sigma$ for all $l\leq l_0$;
\end{enumerate}
then $B_{R/(1+\theta)}\cap \Sigma$ is a graph $V$ over some (possibly different) cylinder with $\|V\|_{C^{2,\alpha}} \leq \epsilon_1$ and $\|V||_{L^2(B_{R/(1+\theta)})} \leq e^{-\frac{R^2}{4(1+\theta)^2}} $. 
\end{theorem}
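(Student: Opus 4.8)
\textbf{Proof proposal for Theorem \ref{thm:improvement}.}

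The plan is to derive Theorem \ref{thm:improvement} from the preceding {\L}ojasiewicz-type theorem by choosing the parameters $\beta,\bar\beta,\kappa$ appropriately and then verifying that the hypotheses (1)--(3) force the scale $R_1$ defined in \eqref{5.1} to be comparable to $R_*$ — in fact at least $R_*/(1+\theta)$ for a suitable $\theta\in(0,\mu)$. First I would fix $\bar\beta$ close enough to $1$ and $\kappa$ small enough that the exponents $\frac{6\bar\beta}{3+\kappa}$ and $\frac{6}{3-\kappa}$ appearing in \eqref{5.1} are both strictly greater than $1$; the point is that raising small quantities to such powers only makes them smaller, which is what creates the gap $\theta<\mu$. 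Then I would apply the {\L}ojasiewicz theorem with these parameters, with $\epsilon_1$ as given and $\lambda_0$ as given, obtaining $R_0,l$ and the constant $C_{\beta,\bar\beta,\kappa}$; these become the $R_0,l_0$ in the statement (after possibly enlarging).

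The main work is bounding each of the four terms inside the bracket in \eqref{5.1} and showing their sum is $\le e^{-r^2/4}$ for some $r\ge R_*/(1+\theta)$ (hence $\ge R/(1+\theta)$ since $R\le R_*$), so that $R_1\ge R/(1+\theta)$. For the $\|\phi_U\|_{L^2}$ term I would use hypothesis (2): $\|\phi_U\|_{L^2}^2\le C_2 e^{-R_*^2/2}$, so $\|\phi_U\|_{L^2}^{6\bar\beta/(3+\kappa)}\le (C_2 e^{-R_*^2/2})^{3\bar\beta/(3+\kappa)}$, and since $3\bar\beta/(3+\kappa)>1$ for $\bar\beta$ near $1$, this decays faster than $e^{-R_*^2/4}$ — more precisely like $e^{-(1+\delta)R_*^2/4}$ for a definite $\delta>0$ depending only on $\bar\beta,\kappa$. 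The $\|\phi_U\|_{L^1}$ term I would control by Cauchy--Schwarz from the $L^2$ bound together with the area bound $\lambda(\Sigma)\le\lambda_0$ on $B_R$ (absorbing a polynomial factor $R^{n/2}$, which is harmless against the exponential gain). For the $\|U\|_{L^2}$ term I would use hypothesis (1): $\|U\|_{L^2(B_R)}^2\le C_n\lambda_0 R^{n-2}e^{-R^2/(4(1+\mu)^2)}$, so $\|U\|_{L^2}^{6/(3-\kappa)}$ decays like a polynomial times $e^{-3R^2/((3-\kappa)4(1+\mu)^2)}$; since $3/(3-\kappa)>1$, the exponent beats $1/(1+\mu)^2$ and, for $\kappa$ chosen in terms of $\mu$, beats $1/(1+\theta)^2$ for a suitable $\theta<\mu$. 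The last term $R^{n-2}e^{-R^2/4}$ is the most favorable, already essentially $e^{-R^2/4}$ up to a polynomial. Combining, the bracket is bounded by $Ce^{-R^2/(4(1+\theta)^2)}$ for $R$ large, which is $\le e^{-r^2/4}$ with $r=R/(1+\theta)$ once $R_0$ is enlarged to absorb the constant and the polynomial factors. Hence $R_1\ge R/(1+\theta)$, and applying the conclusion of the {\L}ojasiewicz theorem on $B_{(1-\gamma)R_1}$ — with $\gamma$ chosen small, and $\theta$ slightly enlarged to absorb the factor $(1-\gamma)$ — gives the graph $V$ over a cylinder with $\|V\|_{C^{2,\alpha}}\le\epsilon_1$ and $\|V\|_{L^2}\le e^{-(1-\gamma)^2 R_1^2/4}\le e^{-R^2/(4(1+\theta)^2)}$, as required.

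The step I expect to be the main obstacle is the bookkeeping of the exponents: one must choose $\bar\beta,\kappa,\gamma$ (and then $\theta$) in the right order so that all four exponential rates in \eqref{5.1} are strictly better than $\frac{1}{4(1+\theta)^2}$ with $\theta<\mu$, while simultaneously ensuring the polynomial prefactors (powers of $R$ from the area bound, and the constants $C_2,C_n,C_{\beta,\bar\beta,\kappa}$) are absorbed by taking $R_0$ large — the quantifier structure ``$\epsilon_2$ first, then $\epsilon_1,\mu,\lambda_0$, then $R_0,l_0,\theta$'' must be respected, and in particular $\epsilon_2$ (which comes from the improvement {\L}ojasiewicz theorem) cannot depend on $\mu$. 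A secondary technical point is verifying that the area hypothesis $\lambda(\Sigma)\le\lambda_0$ legitimately replaces the pointwise area ratio bound used in the previous theorem and suffices to bound $\|\phi_U\|_{L^1}$ and the $L^2$-norms over $B_R$; this is routine since a global entropy bound controls all local area ratios.
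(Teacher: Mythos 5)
Your skeleton is the same as the paper's: apply the preceding {\L}ojasiewicz theorem and show that the bracket in \eqref{5.1} is at most $e^{-r^2/4}$ for $r$ slightly larger than $R/(1+\theta)$, so that $R_1\geq \frac{R}{(1-\gamma)(1+\theta)}$, with $\theta$ chosen close to (but below) $\mu$. However, there is a genuine gap in how you bound the $\phi_U$ terms. The norms $\|\phi_U\|_{L^2}$ and $\|\phi_U\|_{L^1}$ in \eqref{5.1} refer to the shrinker quantity of the cutoff/extended graph over the cylinder, not to $\phi$ restricted to $B_R\cap\Sigma$; hypothesis (2) controls only the latter. The two differ by a cutoff error coming from outside $B_R$, estimated by Lemma 7.16 of \cite{colding2019regularity}, and precisely because $R\leq R_*$ (so $e^{-R_*^2/2}\leq e^{-R^2/2}$) this cutoff error is the \emph{dominant} contribution: the correct bounds are $\|\phi_U\|_{L^2}^2\leq C R^{n}e^{-R^2/4}$ and $\|\phi_U\|_{L^1}\leq C R^{n-1}e^{-R^2/4}$. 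Your claim that the $L^2$ term decays like $e^{-(1+\delta)R_*^2/4}$ is therefore false, and your treatment of the $L^1$ term does not survive the correction: Cauchy--Schwarz applied to the (correct) full-graph $L^2$ bound only gives decay of order $e^{-R^2/8}$, hence $\|\phi_U\|_{L^1}^{\bar\beta}\sim e^{-\bar\beta R^2/8}$ with $\bar\beta<1$, which cannot beat $e^{-R^2/(4(1+\theta)^2)}$; one must instead invoke the $L^1$ cutoff estimate of Lemma 7.16 directly.

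With the corrected bounds the argument still closes, but the exponent bookkeeping is no longer the near-trivial ``small quantities to powers greater than one'' you describe: one needs $\frac{3\bar\beta}{3+\kappa}>\frac{1}{\beta(1+\theta)^2}$ for the $\phi_U$ terms and $\frac{3}{(3-\kappa)(1+\mu)^2}>\frac{1}{\beta(1+\theta)^2}$ for the $\|U\|_{L^2}$ term, where $\beta<1$ provides the margin that absorbs the constant and polynomial prefactors once $R_0$ is large. These conditions pull $\kappa$ in opposite directions and are satisfied only after first taking $\theta$ very close to $\mu$ and then choosing $\beta,\bar\beta$ close to $1$ and $\kappa$ small compared to $\theta$. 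This is exactly the computation in the paper; your version appears easier only because of the incorrect identification of $\|\phi_U\|$ with the quantity in hypothesis (2).
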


\begin{proof}

We want to use the assumed estimates for $U$ and $\phi$ so that when we apply the {\L}ojasiewicz inequality, we will have $R_1\geq \frac{R}{(1-\gamma)(1+\theta)}$ for some $\gamma>0$. Thus the goal is to show that for some $\kappa\in(0,1]$, $\beta<1$ and large enough $R$ we will have
\begin{equation}
\label{eq:improvement-1}
C( \|\phi_U\|_{L^2}^\frac{6\bar{\beta}}{3+\kappa} + \|\phi_U\|_{L^1}^{\bar{\beta}} + \|U\|_{L^2}^\frac{6}{3-\kappa} + R^{n-2} e^{-R^2/4}) \leq e^{-\frac{ R^2}{4\beta(1+\theta)^2}} .
\end{equation}

To do so, we first note that $\|\phi_U\|_{L^2}$ and $\|\phi\|_{L^2}$ differ essentially by the $L^2$ norm of $\phi$ outside of $B_R$. The resulting error term may be estimated using \cite[Lemma 7.16]{colding2019regularity}, and we will refer to similar error terms as \textit{cutoff error}. On the other hand, $\|\phi\|_{L^2}$ is bounded by assumption (2).

In fact, since $R<R_*$, cutoff error contributes the dominant term, and we have $\|\phi_U\|_{L^2}^2 \leq C R^n e^{-R^2/4}$ and $\|\phi_U\|_{L^1} \leq C R^{n-1} e^{-R^2/4}$. 

This bounds the left hand side of (\ref{eq:improvement-1}) by a constant times $\|U\|_{L^2}^\frac{6}{3-\kappa} + R^n e^{-\frac{3\bar{\beta}}{3+\kappa} \frac{R^2}{4}}$. The latter term is dominated by $e^{-\frac{ R^2}{4\beta(1+\theta)^2}}$ so long as $\frac{3\bar{\beta}}{3+\kappa} > \frac{1}{\beta(1+\theta)^2}$. Finally we need $\frac{3}{(3-\kappa)(1+\mu)^2} > \frac{1}{\beta(1+\theta)^2}$. We can always choose $\beta,\kappa$ so that $\frac{3\beta}{3-\kappa} > \frac{(1+\mu)^2}{(1+\theta)^2}$ after choosing $\theta$ very close to $\mu$, which completes the proof. 
\end{proof}

\begin{remark}
The scale improvement Theorem \ref{thm:improvement} in fact holds for certain generalised shrinking cylinders by work of the second named author; see \cite[Theorem 7.2]{zhu2020ojasiewicz}. 
\end{remark}

\subsection{Scale comparison}
\label{sec:scale-comp}

We define now $R_*$ by $e^{-R_*^2/2} = e^{-R_T^2/2} + e^{-T/2}$. Combining our extension step above with the Colding-Minicozzi improvement step shows that the graphical scale extends to a fixed factor larger than $R_*$ by a bootstrapping argument.
In other words, we can precisely control the size of $\Sigma_t$ which is close to a cylinder and the rate is given by $R_\ast$.

\begin{theorem}[Scale comparison]
\label{thm:scale-comp}
Given $\epsilon_0>0$, there exist $R_1, \mu>0$ and $\epsilon_1>0$ such that if $\Sigma_t$ is a RMCFf that is $C^{2,\alpha}, \epsilon_1$ close to a fixed cylinder on $B_{R_1}$ for $t\in [T-1,T+1]$, then there are $C_l$ such that for each $t\in [T-1/2, T+1]$:
\begin{enumerate}
\item $B_{(1+\mu)R_*} \cap \Sigma_t$ is a graph of some $U$ with $\|U\|_{C^{2,\alpha}} \leq \epsilon_0$, $\|U\|_{L^2}^2 \leq C_n\lambda_0 R_T^{n-2} e^{-R_*^2/4}$ and $\|\phi_U\|_{L^2}^2 \leq e^{-(1+\mu)^2\frac{R_*^2}{4}}$;
\item For each $l$ we have $\sup_{B_{(1+\mu)R_*} \cap \Sigma_t} |\nabla^l A| \leq C_l$. 
\end{enumerate}
\end{theorem}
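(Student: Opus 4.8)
The plan is to deduce the scale comparison by iterating the extension step (Theorem~\ref{thm:extension}) and the improvement step (Theorem~\ref{thm:improvement}) in alternation: each round multiplies the spatial scale on which $\Sigma_t$ is a graph over a cylinder by a fixed factor $q>1$, so that after finitely many rounds the scale reaches a definite multiple $(1+\mu)$ of $R_*$, at which point the two steps' conclusions assemble into the assertions of the theorem. The constants must be fixed in order. Given $\epsilon_0$, first apply Theorem~\ref{thm:extension} with target $\epsilon_2:=\min(\epsilon_0,\epsilon_2^{\mathrm{CM}})$, where $\epsilon_2^{\mathrm{CM}}$ is the universal threshold of Theorem~\ref{thm:improvement}; this produces $\epsilon_3$, a small increment $\mu_{\mathrm{ext}}$ (which may be taken as small as we wish), the curvature constants $C_l$, the forcing constant $C_g$, and thresholds. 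Then apply Theorem~\ref{thm:improvement} with output target $\epsilon_1$, the area bound $\lambda_0$ supplied for this flow by Corollary~\ref{cor:area-bound} (valid on all scales below $R^{\mathrm{loc}}$), and its parameter ``$\mu$'' set to $\mu_{\mathrm{ext}}$, producing $\theta\in(0,\mu_{\mathrm{ext}})$ and further thresholds. Finally set $\epsilon_1:=\epsilon_3$, re-derive the improvement constants, put $q:=(1+\mu_{\mathrm{ext}})/(1+\theta)>1$, and take $R_1$ larger than all thresholds and $T\ge t_0$ with $t_0$ large; the freedom in $\epsilon_1$ together with the largeness of $R_1$ makes the crude estimate $\|U\|_{L^2(B_{R_1})}^2\le\epsilon_1^2 F(\Gamma)$ small enough to launch the iteration, and the final $\mu$ will emerge as a definite positive number built from $\mu_{\mathrm{ext}}$ and $\theta$.

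For the inductive step, suppose $\Sigma_t$ is $(C^{2,\alpha},\epsilon_1)$-close to a cylinder on $B_{r_k}$ with $\|U\|_{L^2(B_{r_k})}^2\le e^{-r_k^2/2}$ for $t\in[T-\tfrac12,T+1]$ (the shape in which the improvement step delivers its $L^2$-conclusion). Applying the extension step at scale $r_k$ — legitimate since every scale that arises stays below $\min(R_T,R^{\mathrm{loc}}_{T-1})$, using $R_*\le\min(R_T,\sqrt T)$ and $R^{\mathrm{loc}}_{T-1}=2\sqrt T$ — yields $(C^{2,\alpha},\epsilon_2)$-closeness, $|\nabla^lA|\le C_l$, and $\|\phi_U\|_{L^2}^2\le Ce^{-R_T^2/2}+C_g\lambda_0 e^{-T/2}\le C'e^{-R_*^2/2}$ on $B_{(1+\mu_{\mathrm{ext}})r_k}$; splitting this ball into $B_{r_k}$ and the outer annulus, and estimating the annular part by $\epsilon_2^2$ times the Gaussian tail of the cylinder outside $B_{r_k}$ (which is $\lesssim e^{-r_k^2/4}$ up to a polynomial factor), upgrades the inherited $L^2$-bound to the hypothesis~(1) form of Theorem~\ref{thm:improvement} at scale $(1+\mu_{\mathrm{ext}})r_k$ — this closes precisely because the exponent $-\tfrac12$ in the inherited bound beats the exponent $-\tfrac14$ of the annular error and (after rescaling) of hypothesis~(1). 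Applying then the improvement step at scale $(1+\mu_{\mathrm{ext}})r_k$, which is valid as long as this scale stays below $cR_*$ for a fixed $c>1$ (here one uses that the exponential error terms in the {\L}ojasiewicz/improvement mechanism are still dominated by the cut-off error for scales a fixed factor larger than $R_*$, forcing the largest admissible scale there to be $\ge(1+\mu_{\mathrm{ext}})r_k/(1+\theta)$), produces $(C^{2,\alpha},\epsilon_1)$-closeness to a possibly new cylinder and $\|V\|_{L^2}^2\le e^{-r_{k+1}^2/2}$ on $B_{r_{k+1}}$ with $r_{k+1}:=qr_k$; since $q<1+\mu_{\mathrm{ext}}$ the extension's curvature and gradient bounds survive on $B_{r_{k+1}}\subset B_{(1+\mu_{\mathrm{ext}})r_k}$, so the induction closes with $r_{k+1}=qr_k$.

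Because $q>1$, there is a last index $K$ with $(1+\mu_{\mathrm{ext}})r_K<cR_*$, and then $r_{K+1}=qr_K\ge cR_*/(1+\mu_{\mathrm{ext}})=:(1+\mu)R_*$ with $\mu>0$, after also taking $\mu_{\mathrm{ext}}<c-1$; since $c$ may be taken $\le\sqrt2$, this $\mu$ satisfies $\mu<\sqrt2-1$. On $B_{r_{K+1}}\supseteq B_{(1+\mu)R_*}$, and for every $t\in[T-\tfrac12,T+1]$, we then have $(C^{2,\alpha},\epsilon_1)$-closeness, hence $(C^{2,\alpha},\epsilon_0)$-closeness; the $L^2$-bound $\|U\|_{L^2}^2\le e^{-r_{K+1}^2/2}\le C_n\lambda_0 R_T^{n-2}e^{-R_*^2/4}$ for $T$ large; the curvature bounds $|\nabla^lA|\le C_l$; and $\|\phi_U\|_{L^2}^2\le C'e^{-R_*^2/2}\le e^{-(1+\mu)^2R_*^2/4}$ for $T$ large, using $\mu<\sqrt2-1$. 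Restricting everything to $B_{(1+\mu)R_*}$ is the conclusion.

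The hard part is the bookkeeping that makes the loop close: choosing $\epsilon_0\ge\epsilon_2>\epsilon_3=\epsilon_1$, the increments $\mu_{\mathrm{ext}},\theta$, the scales $R_1,t_0$, and the final $\mu$ all consistently, so that every hypothesis of Theorems~\ref{thm:extension} and~\ref{thm:improvement} is met at each round — especially the scale constraints, which rest on $R_*\le\min(R_T,\sqrt T)<R^{\mathrm{loc}}_{T-1}$ and on the observation (visible from the interpolation factor $e^{(1+\kappa)^2R^2/8}e^{-R_T^2/4}$ in the proof of Theorem~\ref{thm:extension} and from the cut-off domination in the improvement mechanism) that those steps' exponential errors remain controlled for scales a fixed factor larger than $R_T$, resp.\ $R_*$ — together with the transfer of the $L^2$- and $\phi$-estimates out from the scale where the improvement step provides them to the larger scale produced by the extension step, which relies on the Gaussian weight and on the gap between the exponents $-\tfrac12$ and $-\tfrac14$ noted above. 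The feature absent in unforced MCF is the term $C_g\lambda_0 e^{-T/2}$ produced by the extension step: it is why the comparison scale must be $R_*$ (which absorbs that error) rather than the pure shrinker scale $R_T$, and why one needs $R_*\le\sqrt T<R^{\mathrm{loc}}_{T-1}$ so the localisation errors remain negligible throughout the iteration.
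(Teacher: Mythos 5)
Your overall strategy is the paper's: alternate the extension step (Theorem \ref{thm:extension}) and the improvement step (Theorem \ref{thm:improvement}), use the cutoff-error lemma of Colding--Minicozzi to convert the inherited $L^2$ bound into hypothesis (1) of the improvement step at the extended scale, and read off the final estimates (including the $\phi_U$ bound, with $\mu$ taken slightly smaller so the cutoff term is absorbed) at a scale a definite factor beyond $R_*$. However, your termination bookkeeping has a genuine gap. You run the loop while $(1+\mu_{\mathrm{ext}})r_k< cR_*$ with $c>1$ (up to $\sqrt2$) and, crucially, you choose $\mu_{\mathrm{ext}}<c-1$ so that the final scale $r_{K+1}\geq \frac{c}{1+\mu_{\mathrm{ext}}}R_*$ exceeds $R_*$ by a definite factor. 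But this forces the \emph{extension} step to be applied at scales $r_K$ that can be as large as roughly $\frac{c}{1+\theta}R_*>R_*$, and $R_*$ can be arbitrarily close to $R_T$ (when $e^{-T/2}\ll e^{-R_T^2/2}$). Theorem \ref{thm:extension} as stated requires $R<\min(R_T,R^{loc}_{T-1})$, so its hypothesis can fail at your last extension; your assertion that ``every scale that arises stays below $\min(R_T,R^{loc}_{T-1})$, using $R_*\le\min(R_T,\sqrt T)$'' only covers scales $\leq R_*$ and does not justify the scales beyond $R_*$ that your stopping rule produces. Likewise, applying the improvement step at scales up to $cR_*$ exceeds its stated hypothesis $R\leq R_*$; your appeal to the internal cutoff-domination mechanism is plausible but is not licensed by the statement you cite, and you do not redo the exponent bookkeeping there.

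The fix is the paper's termination, which makes your extra push unnecessary: since the theorem only asserts the existence of \emph{some} $\mu>0$, keep every application of the improvement step at scales $\leq R_*$ (so every extension is applied at a scale $\leq R_*\leq\min(R_T,\sqrt T)<R^{loc}_{T-1}$, and the cited hypotheses hold verbatim), and obtain the final factor from one last extension: the inductive hypothesis holds at a scale $\geq R_*/(1+\theta)$ after the last admissible improvement, so the final extension reaches scale at least $\frac{1+\mu_{\mathrm{ext}}}{1+\theta}R_*=(1+\mu)R_*$ with $\mu>0$ because $\theta<\mu_{\mathrm{ext}}$. If you insist on your larger final factor $\frac{c}{1+\mu_{\mathrm{ext}}}$, you must genuinely reprove both Theorems \ref{thm:extension} and \ref{thm:improvement} with enlarged scale ranges (up to a factor approaching $\sqrt2$ beyond $R_T$, resp.\ $R_*$), which is more than your one-line justification provides. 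A minor further point: at the final scale the quantity in the conclusion is $\|\phi_U\|_{L^2}$ for the compactly supported graph, which differs from the $\|\phi\|_{L^2(B\cap\Sigma_t)}$ controlled by the extension step by a cutoff error of size $\sim R_*^{\,n}e^{-(1+\mu)^2R_*^2/4}$; this is why one must shrink $\mu$ slightly at the end, a step your chain $\|\phi_U\|^2\leq C'e^{-R_*^2/2}\leq e^{-(1+\mu)^2R_*^2/4}$ glosses over.
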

\begin{proof}
The point is to use the extension step, Theorem \ref{thm:extension}, to extend the graphical scale by factor $1+\mu$ and the improvement step, Theorem \ref{thm:improvement}, to retain good estimates, after coming in by a factor $1+\theta$, where $\theta<\mu$.  
Take $\epsilon_2$ as in Theorem \ref{thm:improvement} and $\epsilon_3$ as given by Theorem \ref{thm:extension}. 
We may do so under the inductive hypotheses that for each $t\in [T-1/2,T+1]$:
\begin{enumerate}
\item $B_R \cap \Sigma_t$ is given by the graph of $U$ over a fixed cylinder $\Gamma$ with $\|U\|_{C^{2.,\alpha}(B_R)} \leq \epsilon_3$;
\item For each $l$ we have $\sup_{B_{R} \cap \Sigma_t} |\nabla^l A| \leq C_l$.
\item $\|U\|_{L^2(B_R)}^2 \leq e^{-R^2/4}$;
\end{enumerate} 
For $\epsilon_1\leq \epsilon_3$ small enough depending on $R_1$, these will be satisfied at the initial scale $R_1$. Assuming the inductive hypothesis at scale $R \leq R_*$, we may apply Theorem \ref{thm:extension} to extend the graph $U$ and the curvature estimates to scale $(1+\mu)R$. By hypothesis (3), the extended graph will have $L^2$ norm dominated by cutoff error, that is, by \cite[Lemma 7.16]{colding2019regularity},
\begin{equation}
\label{eq:scale-ext-L2}
\|U\|_{L^2(B_{(1+\mu)R})}^2 \leq C_n\lambda_0 R^{n-2} e^{-R^2/4}.
\end{equation} 

By conclusion (2) of the extension step and the definition of $R_*$, we have \[\|\phi\|^2_{L^2(B_{(1+\mu)R}\cap \Sigma_t)} \leq C_2 e^{-R_*^2/2},\] so long as $(1+\mu)R\leq R_*$. We may then apply Theorem \ref{thm:improvement} (on each time-slice). The conclusions of the improvement step mean that the inductive hypothesis is satisfied at scale $\frac{1+\mu}{1+\theta}R$. 

At the last iteration, we use the extension step one last time to extend the scale to $(1+\mu)R_*$. Equation (\ref{eq:scale-ext-L2}) at this scale gives desired the $L^2$ estimate for $U$. The $C^2$ bounds for $U$ follow by interpolation. Finally, the $L^2$ norm for $\phi_U$ at this scale is also dominated by cutoff error; by \cite[Lemma 7.16]{colding2019regularity} again we have 
\begin{align*}
 \|\phi_U\|_{L^2(B_{(1+\mu)R_*})}^2 \leq e^{-R_*^2/2} + C R_*^n  e^{-(1+\mu)^2R_*^2/4}.
\end{align*}
 Taking $\mu'$ slightly smaller than $\mu$, we can always assume enough initial closeness and that $t_0$ is large enough so that $R_*$ is also large and in particular $\|\phi_U\|_{L^2(B_{(1+\mu)R_*})}^2$ is bounded above by $e^{-(1+\mu')^2R_*^2/4}$. 
\end{proof}

\section{Uniqueness of cylindrical tangent flows}
\label{sec:uniqueness-cyl}

In this section, we prove uniqueness for the cylindrical case, Theorem \ref{thm:main}. The overall structure is similar to Section \ref{sec:compact}, but there are several modifications to handle the noncompactness, compounded by only having almost-monotonicity for $F$. Throughout this section, we consider a RMCFf $\Sigma_t$ as in Section \ref{sec:localisation-rescaling}, with $\lambda(M_{s_*}) \leq \lambda_0$. 

\subsection{Almost monotonicity controls $\phi$}

Here we again show that a modified functional $\tilde{F}$ is monotone and its gradient controls the shrinker quantity $\phi$, although the definition is more complicated than the compact case because of the need for localisation. 

Recall the notation of Section \ref{sec:localisation-rescaling}, Section \ref{sec:almost-monotonicity}, and in particular the fixed cutoff function $\psi$ supported on $B_{4r_0}$. Let $\psi_t(x) = \psi(e^{-t/2}x)$, 
\begin{equation*}
\hat{F}(t) = F^{\psi_t^2}_{0,1}(\Sigma_t) = \int_{\Sigma_t} \psi_t^2 \rho,
\end{equation*}
and $\mu(t) = \exp(K_1 e^{-t})$, where $K_1 = K^2 + 2K_\psi^2 r_0^2 + KK_\psi r_0^{-1}$. The modified functional we consider is 
\begin{equation*}
\tilde{F}(t) := \mu(t) \hat{F}(t) + K_3 e^{-nt/2}, 
\end{equation*}
where $K_3 = \frac{4K_2}{n}$, $K_2 = 4K_\psi C_n \lambda_0 (12\pi r_0)^{n/2}$, and $C_n$ is a constant depending only on $n$ which will be determined below.

\begin{lemma}
Assume $\Sigma_t$ is an RMCFf as above. Then there exists a $t_0 \geq0$ such that for all $t_1,t_2\ge t_0$ we have
\begin{equation}
\label{eq:phi-Ftilde}
\int_{t_1}^{t_2} dt \int_{\Sigma_t \cap B_{3e^{t/2}r_0}} |\phi|^2 \rho  \leq 2(\tilde{F}(t_1)-\tilde{F}(t_2)).\end{equation}
\end{lemma}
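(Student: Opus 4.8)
The plan is to establish \eqref{eq:phi-Ftilde} by first computing $\partial_t \hat F(t)$ along the RMCFf, then absorbing the forcing and cutoff error terms into the exponential correction factors $\mu(t)$ and $K_3 e^{-nt/2}$ so that $\tilde F$ becomes genuinely non-increasing with a definite $|\phi|^2$-term in its derivative. Concretely, I would start from the first-variation-type identity for $\partial_t \int_{\Sigma_t} \psi_t^2\rho$ along the flow with velocity $\tilde\phi = \phi + e^{-t/2}\mathbf{G}^\perp$. This produces three kinds of terms: (i) the good term $-\int \psi_t^2 \rho\,\langle\phi,\tilde\phi\rangle$, which under the splitting $\langle\phi,\tilde\phi\rangle = |\phi|^2 + e^{-t/2}\langle\phi,\mathbf{G}^\perp\rangle$ gives $-\int\psi_t^2\rho|\phi|^2$ plus a forcing cross term; (ii) terms coming from the time dependence of $\psi_t$, i.e. $\partial_t(\psi_t^2) = -e^{-t/2}\psi\,D\psi\cdot x\, e^{-t/2}$ (schematically), which are supported on the annulus $B_{4e^{t/2}r_0}\setminus B_{3e^{t/2}r_0}$ where $|x|\sim e^{t/2}r_0$, so the Gaussian weight $\rho$ there is of size $e^{-cr_0^2 e^t}$ — exponentially small — but one must be a little careful since the region has $n$-volume growing like $e^{nt/2}$, which is exactly why the $e^{-nt/2}$-normalised $K_3$ term (with $K_2$ built from the area bound $\lambda_0$ and the annular volume factor $(12\pi r_0)^{n/2}$) appears; and (iii) terms where $D\psi_t$ hits $\rho$ or couples with $\phi$, again annular and exponentially small, but needing Cauchy–Schwarz against $\int\psi_t^2\rho|\phi|^2$ to be absorbed with a small constant.

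The key algebraic steps, in order: \textbf{(1)} derive the evolution identity for $\hat F$; \textbf{(2)} estimate $e^{-t/2}|\langle\phi,\mathbf{G}^\perp\rangle|\leq \tfrac14|\phi|^2 + e^{-t}K^2$ as in the compact case, turning the forcing cross term into (a quarter of) the good term plus $K^2 e^{-t}\hat F$; \textbf{(3)} use $\|\mathbf{G}\|_{C^1}\leq K$ together with the annular estimate $\int_{\Sigma_t}\rho\,\mathbf 1_{B_{4e^{t/2}r_0}\setminus B_{3e^{t/2}r_0}} \leq C_n\lambda_0 e^{-r_0^2 e^t/\text{const}}\cdots$ — actually, more carefully, bound the contributions of the cutoff-derivative terms by $C K_\psi(K^2 + \cdots)$ times $e^{-nt/2}$ after using the area bound from Corollary \ref{cor:area-bound} to control the measure of $\Sigma_t$ on the relevant annular region at scale $e^{t/2}r_0$; this is where the precise value of $K_1 = K^2 + 2K_\psi^2 r_0^2 + K K_\psi r_0^{-1}$ and $K_2 = 4K_\psi C_n\lambda_0(12\pi r_0)^{n/2}$ get pinned down, so that after collecting terms one has $\partial_t \hat F \leq K_1 e^{-t}\hat F - \tfrac34\int\psi_t^2\rho|\phi|^2 + K_2 e^{-nt/2}$ for $t\geq t_0$; \textbf{(4)} compute $\partial_t \tilde F = \mu'(t)\hat F + \mu(t)\partial_t\hat F - \tfrac{n}{2}K_3 e^{-nt/2}$, note $\mu'(t) = -K_1 e^{-t}\mu(t)$ so the $K_1 e^{-t}\hat F$ term is exactly cancelled, and the $K_2 e^{-nt/2}$ term is cancelled by $-\tfrac{n}{2}K_3 e^{-nt/2} = -2K_2 e^{-nt/2}$ (with a factor of $2$ to spare, using $\mu(t)\leq$ const); \textbf{(5)} conclude $\partial_t\tilde F \leq -\tfrac34\mu(t)\int_{\Sigma_t}\psi_t^2\rho|\phi|^2 \leq -\tfrac12 \int_{\Sigma_t\cap B_{3e^{t/2}r_0}}\rho|\phi|^2$, using $\mu(t)\geq 1$ and $\psi_t = 1$ on $B_{3e^{t/2}r_0}$; \textbf{(6)} integrate from $t_1$ to $t_2$.

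The main obstacle — and the only genuinely non-routine part — is step \textbf{(3)}: getting the cutoff-error terms into exactly the form $K_2 e^{-nt/2}$ with the stated constant. The difficulty is twofold. First, one must keep track of the fact that $\psi_t$ is the $e^{t/2}$-rescaling of a \emph{fixed} cutoff, so $|D\psi_t| = e^{-t/2}|D\psi|\leq e^{-t/2}K_\psi/r_0$ and the annulus is at Euclidean radius $\sim e^{t/2}r_0$; the Gaussian $\rho(x)\sim e^{-e^t r_0^2/\text{const}}$ there is super-exponentially small, which is far better than $e^{-nt/2}$ — so in fact the honest bound is much stronger, and one is free to absorb everything crudely into $e^{-nt/2}$ provided $t_0$ is large enough. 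The bookkeeping subtlety is that $F(\Sigma_t)$ — equivalently $\hat F(t)$ — must itself be controlled, which is where one invokes the area bound \eqref{eq:entropy bound implies area bound} from Corollary \ref{cor:area-bound} to get $\hat F(t)\leq F_{0,1}(\Sigma_t\cap B_{4e^{t/2}r_0})\cdots\leq C\lambda_0$; combined with that bound, the term $K^2 e^{-t}\hat F$ is $O(e^{-t})$ and is handled by $\mu$, while the annular terms, even after multiplying by the $e^{nt/2}$-order volume of the annulus, remain $O(e^{-nt/2})$ (indeed much smaller), matching the $K_3 e^{-nt/2}$ reserve. Everything else is a direct adaptation of the compact computation in Section \ref{sec:compact}, now carried out with the localising cutoff $\psi_t$ in place.
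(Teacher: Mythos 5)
Your proposal is correct and follows essentially the same route as the paper: compute the first variation of $\hat F$ with the rescaled cutoff, absorb the forcing cross term and the $D\psi$--$\phi$ term by Young's inequality, bound the annular error via the area bound of Corollary \ref{cor:area-bound} by $K_2 e^{-nt/2}$, and choose $\mu(t)=\exp(K_1 e^{-t})$ and $K_3=4K_2/n$ so that $\mu'$ cancels the $K_1 e^{-t}\hat F$ term and (using $\mu\le 2$ for $t\ge t_0$) the $-\frac n2 K_3 e^{-nt/2}$ term absorbs the annular error, leaving $\partial_t\tilde F\le -\frac12\int_{\Sigma_t}\psi_t^2|\phi|^2\rho$, which integrates to \eqref{eq:phi-Ftilde} since $\psi_t\equiv 1$ on $B_{3e^{t/2}r_0}$. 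Aside from minor bookkeeping (the retained coefficient is $\frac12$ rather than $\frac34$ once both quarter-absorptions are counted), this matches the paper's argument.
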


\begin{proof}
We compute
\begin{align*}
\frac{d}{dt} \hat{F} = -\int_{\Sigma_t} \psi_t^2 \langle \phi, \phi+ e^{-t/2}\mathbf{G} \rangle\rho  + \int_{\Sigma_t} 2\psi_t \langle D\psi (e^{-t/2}x), e^{-t/2} \phi + e^{-t}\mathbf{G} - e^{-t/2}x/2\rangle \rho .
\end{align*}
We estimate $e^{-t/2} |\phi||\mathbf{G}| \leq \frac{1}{4}|\phi|^2 + e^{-t}K^2$ and $2e^{-t/2} \psi_t |D\psi| |\phi| \leq \frac{1}{4}|\phi|^2 \psi_t^2 + 2e^{-t} |D\psi|^2$. We also have $|D\psi| \leq K_\psi r_0^{-1}$, and for $e^{-t/2}x$ to be in the support of $|D\psi|$ we must have $ 3r_0 \leq e^{-t/2}|x| \leq 4r_0$. 
This yields 
\begin{align*}
\begin{split}
\frac{d}{dt} \hat{F}  \leq &-\frac{1}{2}\int_{\Sigma_t} \psi_t^2 |\phi|^2\rho + (e^{-t} K^2 + 2e^{-t} K_\psi^2 r_0^{-2} + e^{-t}KK_\psi r_0^{-1}) \hat{F}(t)  \\
&+ 4K_\psi\int_{\Sigma_t} \rho \mathbf{1}_{B_{4e^{t/2}r_0} \setminus B_{3e^{t/2}r_0}}.
\end{split}
\end{align*}
Using the area growth bound (Corollary \ref{cor:area-bound}) on the last term we have
\begin{align*}
\frac{d}{dt} \hat{F}  \leq -\frac{1}{2} \int_{\Sigma_t}\psi_t^2 |\phi|^2\rho + e^{-t} K_1 \hat{F}(t)  + K_2 e^{-nt/2}, 
\end{align*} 
where $K_1 = K^2 + 2K_\psi^2 r_0^2 + KK_\psi r_0^{-1}$ and $K_2 = 4K_\psi C_n \lambda_0 (12\pi r_0)^{n/2}$. 
Choose $t_0$ so that $\mu(t_0)=2$. Then for $t\geq t_0$, we have $1\leq \mu\leq 2$, so since
\begin{align*}
\tilde{F}(t) = \mu(t)\hat{F}(t)  + K_3 e^{-nt/2}
\end{align*}
where $K_3 = \frac{4K_2}{n}$, we have
\begin{align*}
\frac{d}{dt} \tilde{F} \leq -\frac{1}{2}\mu(t) \int_{\Sigma_t} \psi_t^2 |\phi|^2\rho \leq -\frac{1}{2} \int_{\Sigma_t} \psi_t^2 |\phi|^2\rho .
\end{align*}
In particular, $\tilde{F}(\Sigma_t)$ is non-increasing, and integrating gives
\begin{equation}
\label{eq:phi-tilde-F}
\int_{t_1}^{t_2}dt \int_{\Sigma_t \cap B_{3e^{t/2}r_0}} |\phi|^2 \rho  \leq 2(\tilde{F}(t_1)-\tilde{F}(t_2))
\end{equation}
which finishes the proof.
\end{proof}

\subsection{Discrete differential inequality}

The required {\L}ojasiewicz inequalities follow from the improvement and extension steps above. We also need the following fact:

By \cite[Proposition 6.5]{colding2019regularity}, for compactly supported normal graphs over the cylinder with graph function $U$ with small enough $C^2$ norm, we have \begin{equation} |F(\Gamma_U) - F(\Gamma)| \leq C\|\phi_U\|_{L^2} \|U\|_{L^2} + C\|U\|_{L^2}^3.\end{equation}

We now proceed to prove the discrete differential inequality for the cylindrical case: 

\begin{theorem}\label{T:discrete}
Given $n,N$, there exist $K,C, \epsilon, t_0$ and $\mu\in(0,1/2)$ such that if $\Sigma_t$ is a RMCFf which is $(\epsilon,R_1, C^{2,\alpha})$-close to some cylinder $\Gamma$ for $t\in[T-1,T+1]$, $T\geq t_0$, then 
\begin{align*}
|\tilde{F}(T)-F(\Gamma)| \leq K(\tilde{F}(T-1)- \tilde{F}(T+1)^{\frac{1+\mu}{2}} + C e^{-\frac{1+\mu}{4}T}.
\end{align*}
\end{theorem}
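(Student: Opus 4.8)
The plan is to combine the scale comparison theorem (Theorem~\ref{thm:scale-comp}), the two {\L}ojasiewicz inequalities (the improvement step and \cite[Proposition 6.5]{colding2019regularity}), and the monotonicity estimate \eqref{eq:phi-Ftilde} to bound the oscillation of $\tilde F$ by a power of the $L^2$-norm of $\phi$ over the time window, with controlled error terms. First I would apply Theorem~\ref{thm:scale-comp} at time $T$: since $\Sigma_t$ is $(\epsilon,R_1,C^{2,\alpha})$-close to $\Gamma$ on $[T-1,T+1]$, we obtain, for $t\in[T-1/2,T+1]$, a graph $U(t)$ over some cylinder on $B_{(1+\mu)R_*}$ with $\|U\|_{C^{2,\alpha}}\le\epsilon_0$, $\|U\|_{L^2}^2\le C_n\lambda_0 R_T^{n-2}e^{-R_*^2/4}$, $\|\phi_U\|_{L^2}^2\le e^{-(1+\mu)^2 R_*^2/4}$, and all higher curvature bounds. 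The key point is that $R_*$ is defined by $e^{-R_*^2/2}=e^{-R_T^2/2}+e^{-T/2}$, so these quantities are controlled by the shrinker scale \emph{plus} the exponential error $e^{-T/2}$.

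Next I would feed these estimates into the {\L}ojasiewicz inequality $|F(\Gamma_U)-F(\Gamma)|\le C\|\phi_U\|_{L^2}\|U\|_{L^2}+C\|U\|_{L^2}^3$. Using the bounds above, $\|\phi_U\|_{L^2}\|U\|_{L^2}\lesssim e^{-(1+\mu)^2R_*^2/8}\cdot (R_T^{n-2}e^{-R_*^2/4})^{1/2}$ and $\|U\|_{L^2}^3\lesssim R_T^{3(n-2)/2}e^{-3R_*^2/8}$; both are $\le C e^{-\frac{1+\mu'}{2}\cdot\frac{R_*^2}{2}}$ for a slightly smaller $\mu'$ once $R_*$ is large, i.e. $\le C(e^{-R_T^2/2}+e^{-T/2})^{\frac{1+\mu'}{2}}$. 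From \eqref{eq:shrinker}, $e^{-R_T^2/2}=\int_{T-1}^{T+1}\|\phi\|_{L^2(\Sigma_t\cap B_{3e^{t/2}r_0})}^2\,dt$, and by the monotonicity estimate \eqref{eq:phi-Ftilde} this is at most $2(\tilde F(T-1)-\tilde F(T+1))$. So $|F(\Gamma_{U(t)})-F(\Gamma)|\le C(\tilde F(T-1)-\tilde F(T+1))^{\frac{1+\mu'}{2}}+Ce^{-\frac{1+\mu'}{4}T}$, and one needs to relate $F(\Gamma_{U(T)})$ (the graph over the possibly-rotated cylinder produced by Theorem~\ref{thm:scale-comp}) to $\tilde F(T)$. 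This requires: (i) that $F(\Gamma)$ is the same for all cylinders with the given axis/dimension (rotation invariance of the Gaussian area of the cylinder, so the ``different cylinder'' issue is harmless for the value $F(\Gamma)$), and (ii) controlling the difference between $\hat F(T)=\int\psi_T^2\rho$ over the localised true flow and $F(\Gamma_{U(T)})=\int_{\Gamma_{U(T)}}\rho$ over the graph on $B_{(1+\mu)R_*}$ — this is a cutoff-error estimate, since $\psi_T$ is supported in $B_{4e^{T/2}r_0}$ while the graph is only controlled on $B_{(1+\mu)R_*}\subset B_{R^{loc}_{T-1}}$, but the tail of the Gaussian beyond $B_{(1+\mu)R_*}$ contributes at most $C\lambda_0 e^{-cR_*^2}$, absorbable. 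Finally, $|\tilde F(T)-\hat F(T)|\le (\mu(T)-1)\hat F(T)+K_3 e^{-nT/2}\le Ce^{-T}+Ce^{-nT/2}$, which is also $\le Ce^{-\frac{1+\mu}{4}T}$.

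Assembling: $|\tilde F(T)-F(\Gamma)|\le |\tilde F(T)-\hat F(T)|+|\hat F(T)-F(\Gamma_{U(T)})|+|F(\Gamma_{U(T)})-F(\Gamma)|\le C(\tilde F(T-1)-\tilde F(T+1))^{\frac{1+\mu'}{2}}+Ce^{-\frac{1+\mu'}{4}T}$, which after relabelling $\mu'\to\mu$ and absorbing constants into $K$ is the claimed inequality. One subtlety to handle carefully: in applying the {\L}ojasiewicz inequality one needs $\Gamma_{U(t)}$ to be a \emph{compactly supported} normal graph over the cylinder with small $C^2$ norm; since the graph from Theorem~\ref{thm:scale-comp} is only defined on $B_{(1+\mu)R_*}$, one should apply a further cutoff and track the resulting cutoff error in $\phi_U$ (bounded via \cite[Lemma 7.16]{colding2019regularity} as in the scale comparison proof) — this does not change the exponents because, as already observed in Theorem~\ref{thm:improvement}, cutoff error at scale below $R_*$ is dominated by the other terms.

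\textbf{Main obstacle.} The hard part will be the bookkeeping of error terms: keeping the exponential forcing error $e^{-cT}$, the localisation/cutoff errors from $\psi_t$ and from truncating the graph, and the power $\frac{1+\mu}{2}$ of the monotonicity gap all consistent, so that after the compoundings needed in the subsequent iteration argument the discrete differential inequality still yields a genuine rate. In particular, one must verify that every error genuinely decays like $e^{-cT}$ with $c$ large enough (here $c=\frac{1+\mu}{4}$) rather than merely like a negative power of $R_*$ or $R_T$, since only an honestly-exponential error is harmless when summed over the induction — this is exactly the role played by choosing $t_0$ large and $\mu$ slightly sub-optimal at each step.
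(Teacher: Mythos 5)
Your proposal is correct and follows essentially the same route as the paper: decompose $|\tilde F(T)-F(\Gamma)|$ into the modification error $|\tilde F(T)-\hat F(T)|$, the cutoff error between $\hat F(T)$ and $F(\Gamma_U)$ (via \cite[Lemma 7.16]{colding2019regularity}), and $|F(\Gamma_U)-F(\Gamma)|$ estimated by \cite[Proposition 6.5]{colding2019regularity} together with the bounds from Theorem \ref{thm:scale-comp}, then substitute the definition of $R_*$ and use \eqref{eq:phi-Ftilde}, choosing $\mu'$ slightly below $\mu+\mu^2/2$ exactly as in the paper. Your extra remarks on the rotation invariance of $F(\Gamma)$ and on truncating the graph are consistent with (and slightly more explicit than) the paper's treatment.
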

\begin{proof}
First, by the triangle inequality we have 
\begin{align*}
|\tilde{F}(t) - F(\Gamma)|\leq K_3 e^{-nt/2} + \mu(t)|\hat{F}(t)-F(\Gamma)| + F(\Gamma) (\mu(t)-1).
\end{align*} 

Recall the definition of $R_*$ from Section \ref{sec:scale-comp}; by the scale comparison Theorem \ref{thm:scale-comp}, we know that $\Sigma_T$ is a graph $U$ over the cylinder $\Gamma$ at scale $B_{(1+\mu)R_*}$. So again using \cite[Lemma 7.16]{colding2019regularity} to estimate the cutoff error, we have 
\begin{align*}
|\hat{F}(T) - F(\Gamma)| = |F^{\psi_T^2}_{0,1}(\Sigma_T)-F(\Gamma)| \leq |F^{\psi_T^2}_{0,1}(\Gamma_U)-F(\Gamma)| + C_n\lambda_0 R_*^{n-2} e^{-(1+\mu)^2R_*^2/4}.
\end{align*}

Since $R_* \leq R^{loc}_{T-1}$, we have $F^{\psi_T^2}_{0,1}(\Gamma_U) = F(\Gamma_U)$. 

Moreover the scale comparison shows that $\|U\|_{L^2}^2 \leq C_n\lambda_0 R_*^{n-2} e^{-R_*^2/4}$ and $\|\phi_U\|_{L^2}^2 \leq e^{-(1+\mu)^2\frac{R_*^2}{4}}$, so by the estimate above we have 
\begin{align*}
|F(\Gamma_U)-F(\Gamma)| \leq C R_*^{n-2} e^{-\frac{(1+(1+\mu)^2)}{8}R_*^2}.
\end{align*}

Taking $\mu'$ slightly smaller than $\mu + \mu^2/2$, we can always assume enough initial closeness and that $t_0$ is large enough so that $R_*$ is also large and in particular the last expression is  bounded by $C e^{-(1+\mu') R_*^2/4}$. 

Finally substituting the definition of $R_*$ and using (\ref{eq:phi-tilde-F}), we have
\begin{align*}
|\tilde{F}(T) - F(\Gamma)| \leq C(2(\tilde{F}({T-1}) - \tilde{F}({T+1})) + e^{-T/2})^{\frac{1+\mu'}{2}}+ K_3 e^{-nT/2} + F(\Gamma) ( \exp(K_1e^{-T})-1).
\end{align*}

Since $T\geq t_0$ is large, we may estimate this by 
\begin{align*}
|\tilde{F}(T)-F(\Gamma)| \leq C(\tilde{F}({T-1})- \tilde{F}({T+1}))^{\frac{1+\mu'}{2}} + C' e^{-\frac{1+\mu'}{4}T}.
\end{align*}
\end{proof}

\subsection{Graphical representation}

Again, to apply the {\L}ojasiewicz inequalities we need a good graphical representation of $\Sigma_t$ over the cylinder $\Gamma$. In the non-compact setting, even the initial closeness is nontrivial. As in \cite{colding2015uniqueness}, we use the rigidity of the cylinder to get closeness (to \textit{some} cylinder) at all times:

First, we prove the following analog of \cite[Corollary 0.3]{colding2015rigidity} for MCF with forcing, which establishes uniqueness of tangent type. 
\begin{proposition}\label{CIM03}
If one tangent flow at a singular point of mean curvature flow with forcing is a multiplicity one cylinder, they all are.
\end{proposition}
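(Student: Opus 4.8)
The plan is to follow the strategy of Colding–Minicozzi \cite{colding2015rigidity} and argue by contradiction using the rigidity of the cylinder. Suppose that at some singular point (which we may take to be $(0,0)$) one tangent flow is a multiplicity-one cylinder $\mathcal C = \mathbb R^k \times \sqrt{2(n-k)}\, S^{n-k}$, but some other tangent flow $\mathcal C'$ is different. Passing to the rescaled flow $\Sigma_t$, the first fact gives a sequence $t_i \to \infty$ along which $\Sigma_{t_i}$ converges smoothly on compact sets to (a rotation of) $\mathcal C$. The key point is that the set of cylinders through the origin that arise as subsequential limits of $\Sigma_t$ is \emph{connected}: if $\Sigma_{t_i} \to \mathcal C$ and $\Sigma_{s_i} \to \mathcal C''$ with $\mathcal C, \mathcal C''$ both cylinders, then by continuously varying times between $t_i$ and $s_i$ and using that being $(C^{2,\alpha},\epsilon)$-close to \emph{some} cylinder is an open condition (together with the scale comparison Theorem \ref{thm:scale-comp} and the rigidity/compactness of the moduli space of cylinders through the origin with bounded entropy, which is a finite union of closed orbits of a compact group), one shows the limit cylinder cannot "jump." Hence all cylindrical subsequential limits coincide with $\mathcal C$, so $\mathcal C' = \mathcal C$, unless $\mathcal C'$ is not a cylinder at all.

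To rule out a non-cylindrical tangent flow $\mathcal C'$, I would use an Łojasiewicz-type gap. Since the tangent flow $\mathcal C$ is a multiplicity-one cylinder, its Gaussian density is $F(\mathcal C)$, and by Huisken-type monotonicity adapted to the forced setting (the modified functional $\tilde F(t)$ of the previous subsection, which is monotone for $t \ge t_0$) the limit $\lim_{t\to\infty}\tilde F(t)$ exists and equals $F(\mathcal C)$. Consequently \emph{every} tangent flow at $(0,0)$ has Gaussian density equal to $F(\mathcal C)$. Now if $\Sigma_{s_i} \to \mathcal C'$ for a smooth shrinker $\mathcal C'$, the entropy bound (Corollary \ref{cor:area-bound}) and White's Brakke regularity for $K$-almost Brakke flows \cite{white2005local} force the convergence to be smooth and with multiplicity one on compact sets. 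But by the iterated extension/improvement machinery (Theorem \ref{thm:extension} and Theorem \ref{thm:improvement}), once $\Sigma_{s_i}$ is $(C^{2,\alpha},\epsilon_1)$-close to a cylinder on a ball $B_{R_1}$ for a window of time, it is in fact close to a cylinder on the enormous ball $B_{(1+\mu)R_*}$; applying this along the sequence $s_i$ (whose closeness to a cylinder is guaranteed by taking $s_i$ near the $t_i$'s, using connectedness from the previous paragraph), $\mathcal C'$ must itself be a cylinder. This is the contradiction.

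Assembling these: the set $G \subset \{t \ge t_0\}$ of times at which $\Sigma_t$ is $(C^{2,\alpha},\epsilon_1)$-close on $B_{R_1}$ to \emph{some} cylinder is open, nonempty (it contains a neighborhood of each $t_i$), and I claim it is also all of $[t_0,\infty)$ eventually: if not, there is a sequence $\tau_i \to \infty$ with $\tau_i \notin G$, and $\Sigma_{\tau_i}$ subconverges (by the area bounds and Brakke compactness) to a shrinker that is \emph{not} $\epsilon_1$-close to any cylinder on $B_{R_1}$; but this shrinker has density $F(\mathcal C)$ and, being a limit of a flow that is frequently cylindrical, must by the scale-comparison argument be a cylinder, contradicting $\tau_i \notin G$ for large $i$. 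Hence $G \supset [t_1,\infty)$ for some $t_1$, the scale comparison applies on all of $[t_1,\infty)$, and the limiting cylinder is unique by connectedness. Therefore every tangent flow is that same cylinder.

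The main obstacle I anticipate is the connectedness/"no jumping" step: one must show that the cylinder (in particular its axis, i.e. the choice of splitting $\mathbb R^k \times S^{n-k}$ and the rotation) cannot vary discontinuously as $t$ ranges over an interval where $\Sigma_t$ stays $\epsilon_1$-close to the moduli space of cylinders. In the unforced case this is handled by the rigidity results of \cite{colding2015rigidity} together with the fact that nearby cylinders differ by a bounded rotation and the flow speed is controlled; here one must additionally absorb the forcing error terms $e^{-t/2}\mathbf G^\perp$, but these decay and are dominated by the same estimates (the mean value inequality, Lemma \ref{lemma:meanvalue}, and the $L^2$ control $\int |\phi|^2\rho \le 2(\tilde F(t_1)-\tilde F(t_2))$ from \eqref{eq:phi-tilde-F}) used throughout this section, so the argument goes through with only quantitative changes.
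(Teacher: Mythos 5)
There is a genuine gap, and it sits exactly where the paper has to do real work. Your key step --- ruling out times $\tau_i\to\infty$ at which $\Sigma_{\tau_i}$ is not close to any cylinder by saying the subsequential limit ``must by the scale-comparison argument be a cylinder'' --- is circular: Theorem \ref{thm:scale-comp} (and the extension/improvement steps feeding it) can only be applied at times where $\Sigma_t$ is \emph{already} $(C^{2,\alpha},\epsilon_1)$-close to a cylinder on $B_{R_1}$ over a time window, which is precisely what fails at $\tau_i\notin G$. Likewise, knowing that every tangent flow has Gaussian density $F(\mathcal C)$ does not by itself force the limit to be a cylinder. What is missing is the mechanism the paper actually uses: (i) since $\tilde F(t)\searrow F(\Gamma)$, the drop $F(\Sigma_T)-F(\Sigma_{T+1})$ tends to zero for \emph{all} large $T$; (ii) a compactness statement (Lemma \ref{CIM Lemma}, the forced analogue of Proposition 2.13 of Colding--Ilmanen--Minicozzi, proved via the compactness theorem for almost Brakke flows and monotonicity of $F$ for the unforced limit) converting this small drop into: on every slab $[T,T+1]$ the flow is $d_V$-close to a single $F$-stationary varifold with entropy bound; and (iii) the rigidity/isolatedness of the cylinder among $F$-stationary varifolds with bounded entropy (\cite[Corollary 2.12]{colding2015rigidity}), which upgrades ``close to a cylinder in the varifold sense'' to ``is a cylinder'' and lets one chain through overlapping slabs starting from the known cylindrical sequence. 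Without (ii) and (iii), your openness-plus-connectedness sketch has no way to prevent the flow from drifting through non-cylindrical $F$-stationary limits at intermediate times; indeed, if the soft ``no jumping'' argument you describe worked, the entire {\L}ojasiewicz machinery of the paper would be unnecessary.

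A smaller point: the proposition only asserts that \emph{all} tangent flows are multiplicity-one cylinders; uniqueness of the axis is the content of Theorem \ref{thm:main} and is not needed (and should not be attempted) here. So the elaborate connectedness-of-the-axis discussion is both unproven as sketched and beside the point; the proof should instead be organized around the small-drop compactness lemma and cylinder rigidity as above.
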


The proof of this proposition largely follows that of \cite{colding2015rigidity}. The key is to replace their Proposition 2.13.
Instead, we show:

\begin{lemma}\label{CIM Lemma}
Given $n,\lambda_0,\epsilon>0$, there exists $\delta>0$ and $t_0\in(0,\infty)$ so that if $\Sigma_t\subset \R^{n+k}$ is a RMCFf with $\sup_{x\in B_R, r>0} r^{-n} |\Sigma_t\cap B_r(x)| \leq \lambda_0$ for $t\in[T,T+1]$, $T\geq t_0$,  and
\begin{align*}
F(\Sigma_{T})-F(\Sigma_{T+1})\le\delta.
\end{align*}
Then there is an $F$-stationary varifold $\Sigma$ such that $d_V(\Sigma, \Sigma_t)\le \epsilon$ for all $t\in[T,T+1]$ and $F(\Sigma)\le\lambda_0$. 
\end{lemma}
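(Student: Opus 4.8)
\textbf{Proof plan for Lemma \ref{CIM Lemma}.} The statement is a compactness-and-monotonicity assertion: a small drop of $F$ over a unit time interval forces the rescaled flow to be $\epsilon$-close (in varifold distance) on that whole interval to a single $F$-stationary varifold. The plan is to argue by contradiction. Suppose the lemma fails for some $\epsilon>0$; then there is a sequence of RMCFf $\Sigma_t^j$ on $[T_j,T_j+1]$ with $T_j\to\infty$, satisfying the uniform area bound, with $F(\Sigma_{T_j}^j)-F(\Sigma_{T_j+1}^j)\le 1/j$, but for which no $F$-stationary varifold is $\epsilon$-close to $\Sigma_t^j$ on all of $[T_j,T_j+1]$ (with $F\le\lambda_0$). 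First I would translate each flow in time so that it is defined on the fixed interval $[0,1]$, and pass back to the \emph{unrescaled} picture: the corresponding $K_j$-almost Brakke flows $M_s^j$ in $B_{4r_0}$, where $K_j\to 0$ because the forcing term on the rescaled flow is $e^{-t/2}\mathbf G^\perp$ and $t\sim T_j\to\infty$ (equivalently $s\searrow 0$). By the compactness theorem for almost Brakke flows (White \cite{white1997stratification}, as used in Lemma \ref{Schulze2.2}), together with the uniform area bound from Corollary \ref{cor:area-bound} (which gives the local area ratio bound $\lambda_0$ on $B_{3r_0}$) and a diagonal argument, a subsequence of the $\Sigma_t^j$ converges to a limiting \emph{unforced} Brakke flow $\Sigma_t$ on $[0,1]$, in the sense of varifolds for each $t$ and in the weak sense spacetime.

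Next I would identify the limit as a static $F$-stationary varifold. The key ingredient is the monotonicity obtained in the RMCFf setting: from \eqref{eq:phi-tilde-F}, or more precisely from the computation that $\tilde F$ is monotone nonincreasing with $\frac{d}{dt}\tilde F\le -\tfrac12\int_{\Sigma_t}\psi_t^2|\phi|^2\rho$, we get $\int_0^1 dt\int_{\Sigma_t^j\cap B_{3e^{t/2}r_0}}|\phi|^2\rho \le 2(\tilde F^j(0)-\tilde F^j(1))$. Since $\tilde F^j$ differs from $F^j$ by terms which are $O(e^{-nt/2})+O(K^2 e^{-t})\to 0$, and since by hypothesis $F(\Sigma_{T_j}^j)-F(\Sigma_{T_j+1}^j)\le 1/j\to 0$, the right hand side tends to $0$. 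Hence $\int_0^1\int_{\Sigma_t}|\mathbf H + x^\perp/2|^2\rho\,dt = 0$ in the limit — here I should be a little careful and instead phrase this via lower semicontinuity of the weighted $L^2$ norm of $\phi$ under varifold/Brakke convergence (exactly the standard argument that limits of rescaled MCF with $\int\|\phi\|^2\to 0$ are shrinkers). Combined with the fact that $\Sigma_t$ is a Brakke flow (hence has locally bounded area and no mass drop issues beyond what Brakke's inequality allows), this forces $\Sigma_t$ to be independent of $t$ and equal to a single integral varifold $\Sigma$ with $\phi=0$, i.e. $F$-stationary. The area bound $F(\Sigma)\le\lambda_0$ passes to the limit from Corollary \ref{cor:area-bound}.

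Finally I would derive the contradiction: by varifold convergence for each fixed $t$ (upgraded to uniform-in-$t$ closeness on $[0,1]$ using the Brakke flow structure — e.g. the fact that $t\mapsto \Sigma_t^j$ and $t\mapsto\Sigma$ are continuous in $d_V$ with modulus controlled by the area bounds, or simply by extracting convergence on a countable dense set of times and using equicontinuity), for $j$ large we have $d_V(\Sigma,\Sigma_t^j)\le\epsilon$ for all $t\in[0,1]$, with $F(\Sigma)\le\lambda_0$. After undoing the time translation this says exactly that $\Sigma$ is the $F$-stationary varifold promised by the lemma for the flow $\Sigma^j$, contradicting our assumption. I expect the main obstacle to be the two technical points in the middle step: first, justifying that the vanishing of $\int\int|\phi|^2\rho$ in the limit genuinely forces $t$-independence of the limiting Brakke flow (this uses that a Brakke flow which is instantaneously stationary at a.e.\ time, in the weighted sense, cannot move — one argues via Huisken monotonicity for the limit flow, which \emph{is} available since the limit is unforced), and second, promoting the pointwise-in-$t$ varifold convergence to convergence uniform on the closed interval $[0,1]$, which is needed because the conclusion asserts closeness for \emph{all} $t\in[T,T+1]$. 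Both are handled by standard Brakke flow machinery but require care about which times are `good'; one can also absorb the endpoint issue by proving closeness on $[T+\eta,T+1-\eta]$ first and then using continuity of the flows in $d_V$ up to the endpoints.
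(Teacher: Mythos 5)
Your proposal is correct and follows essentially the same route as the paper: argue by contradiction, translate in time and pass to the unrescaled $e^{-T_i}K$-almost Brakke flows, apply White's compactness theorem to obtain an unforced limiting Brakke flow, identify it as a static $F$-stationary varifold via monotonicity, and contradict the assumed $\epsilon$-far times. The only (minor) difference is that the paper identifies the static limit by passing the hypothesis $F(\Sigma_{T_i})-F(\Sigma_{T_i+1})\le 1/i$ directly to the limit and invoking monotonicity of $F$ for unforced rescaled flows, which rules out sudden mass drop more directly than your primary route through the vanishing of $\int\int|\phi|^2\rho$ and lower semicontinuity — precisely the obstacle you flag, and which your suggested fallback (Huisken monotonicity for the unforced limit) resolves.
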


Here we use the metric $d_V$ from \cite[Equation 2.11]{colding2015rigidity}. This metric is defined on finite Radon measures and its induced topology is the weak topology of Radon measures.

\begin{proof}
Suppose the lemma is false. 
Then there exists $T_i\to \infty$ and a sequence $\Sigma^i_t$ of RMCFf's such that $F(\Sigma_{T_i}^i)-F(\Sigma_{T_i+1}^i)\le \frac1i$ and that for every $F$-stationary varifold $\Sigma$, we have
\begin{align}
\label{contradiction}
d_V(\Sigma, \Sigma_{t_i}^i)\ge\epsilon>0
\end{align}
for some $t_i\in[T_i,T_i+1]$.

Let $\tilde{\Sigma}^i_t = \Sigma^i_{t-T_i}$ and $\tilde{M}^i_s=\sqrt{-s} \tilde \Sigma_t^i$, $t=-\ln(-s)$, be the corresponding `unrescaled' flows, which are $e^{-T_i}K$-almost Brakke flows. By the compactness theorem for almost Brakke flows (cf. \cite[Section 11]{white1997stratification}) and using a diagonal sequence, we find that the $\tilde{M}^i_s$ converge to an unforced Brakke flow $\tilde{M}^\infty_s$. The corresponding flow $\tilde{\Sigma}^\infty_t = e^{t/2} \tilde{M}^\infty_s$ is a unforced rescaled MCF, and satisfies $F(\tilde{\Sigma}^\infty_0) - F(\tilde{\Sigma}^\infty_1) =0$.

But then by the monotonicity of $F$ under RMCF, $\tilde{\Sigma}^\infty_t \equiv  \tilde{\Sigma}^\infty$ must be a static RMCF, that is, induced by some $F$-stationary varifold $ \tilde{\Sigma}^\infty$. 
However, this contradicts equation \eqref{contradiction} for $\Sigma= \tilde{\Sigma}^\infty$.
\end{proof}

We now complete the proof of Proposition \ref{CIM03}:

\begin{proof}[Proof of Proposition \ref{CIM03}]

Suppose that one of the tangent flows that $(0,0)$ is cylindrical. By monotonicity of $\tilde{F}$, we have $\tilde{F}(t) \searrow F(\Gamma)$. 
In particular, for any $\delta>0$ there exists $t_0\geq 1$ such that $\tilde{F}(t) -\tilde{F}(t+1) \leq \tilde{F}(t) - F(\Gamma) <\frac{\delta}{3}$ for all $t\geq t_0$. 
Moreover, for $t_0$ sufficiently large, we can ensure that $|\tilde F(t)-F(\Sigma_t)|\le \frac\delta3$ for all $t\ge t_0$.
Then by the triangle inequality, $ F(\Sigma_t)- F(\Sigma_{t+1})\le \delta$ for any $t\ge t_0$.

We may then apply Lemma \ref{CIM Lemma} to deduce the existence of an $F$-stationary varifold $\Sigma$ such that $d_V(\Sigma,\Sigma_t)\le \epsilon$ for all $t\in [T,T+1]$ and $\lambda(\Sigma)\le \lambda_0$. The rigidity of the cylinder \cite[Corollary 2.12]{colding2015rigidity} (see also \cite[Theorem 0.11]{CMcomplexity}), finishes the proof.
\end{proof}

\begin{remark}
The proof above is slightly simpler than the proof of \cite[Theorem 0.2]{colding2015rigidity}.
The simplification arises from the fact that we again focus on the uniqueness of cylindrical tangent flows, corresponding to their Corollary 0.3, rather than their full Theorem 0.2, which may be considered an $\epsilon$-regularity result for tangent flows at nearby points. However, the proof above can also be adapted to also give the $\epsilon$ regularity result for MCF with forcing.
\end{remark}

\subsection{Final uniqueness}

\begin{theorem}
Let $M_\tau^n$ be an embedded MCF with forcing in $\mathcal{U} \subset \mathbb{R}^N$. If one tangent flow at at a singular point is a multiplicity one cylinder, then the tangent flow at that point is unique. That is, any other tangent flow is also a cylinder (with the same axis and multiplicity one). 
\end{theorem}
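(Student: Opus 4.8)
The plan is to run the strategy of Section~\ref{sec:compact}, with the compact shrinker replaced by the cylinder, Schulze's extension lemma replaced by Proposition~\ref{CIM03}, and the continuous differential inequality replaced by the discrete one of Theorem~\ref{T:discrete}. Normalise the singular point to $(0,0)$ and let $\Sigma_t$ be the corresponding RMCFf. Since $\Theta_{(0,0)}$ is a single number, all cylindrical tangent flows at $(0,0)$ have the same sphere factor; write $F_\Gamma$ for the (axis-independent) Gaussian area of that model cylinder. As in the proof of Proposition~\ref{CIM03}, monotonicity of $\tilde F$ together with convergence along the rescaling sequence gives $\tilde F(t)\searrow F_\Gamma$ as $t\to\infty$. \textbf{Step 1 (graphical representation at all large times).} In contrast with the compact case, no extension of the graphical representation in time is needed: the argument for Proposition~\ref{CIM03}, built on Lemma~\ref{CIM Lemma} and the rigidity of the cylinder \cite[Corollary~2.12]{colding2015rigidity}, already shows that for every $\epsilon_1>0$ there is $t_0$ such that for each $t\geq t_0$, $\Sigma_t$ is $d_V$-close to some cylinder on the unit interval about $t$ (cylinders on overlapping intervals agree, again by rigidity). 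Upgrading this $d_V$-closeness --- using White's Brakke regularity \cite{white2005local} and the interior estimates of Appendix~\ref{sec:interior-estimates} --- to a small $C^{2,\alpha}$ normal graph over that cylinder on $B_{R_1}$, where $R_1$ is the radius from Theorem~\ref{thm:scale-comp}, and then applying Theorem~\ref{thm:scale-comp}, we obtain for each $t\geq t_0+1$ a normal graph $U(\cdot,t)$ of $\Sigma_t$ over some cylinder on $B_{(1+\mu)R_*}$ with $\|U(\cdot,t)\|_{C^{2,\alpha}}$ small, $\|\phi_U\|_{L^2}^2\leq e^{-(1+\mu)^2R_*^2/4}$, and curvature bounds $|\nabla^l A|\leq C_l$ on $B_{(1+\mu)R_*}\cap\Sigma_t$.

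\textbf{Step 2 (decay of $\tilde F$ and of $\phi$).} Since the scale comparison now holds at all large times, the discrete differential inequality Theorem~\ref{T:discrete} holds for every $T\geq t_0$. Writing $f(T):=\tilde F(T)-F_\Gamma\geq 0$, which is non-increasing with $f(T)\to 0$, it reads $f(T)\leq K\bigl(f(T-1)-f(T+1)\bigr)^{\frac{1+\mu}{2}}+Ce^{-\frac{1+\mu}{4}T}$, and solving this recurrence (Appendix~\ref{A: discrete}) yields a polynomial rate $f(T)\leq C_0 T^{-\alpha}$ with $\alpha>1$ (one expects $\alpha=\tfrac{1+\mu}{1-\mu}$; the exponential error term is of strictly lower order and does not affect the rate). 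Combined with \eqref{eq:phi-tilde-F} and $B_{R_1}\subset B_{3e^{t/2}r_0}$ for $t$ large, this gives
\begin{align*}
\int_{t_1}^{t_2}\!dt\int_{\Sigma_t\cap B_{R_1}}|\phi|^2\rho\ \leq\ 2\,f(t_1)\ \leq\ C_0\,t_1^{-\alpha},\qquad t_0\ll t_1\leq t_2.
\end{align*}

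\textbf{Step 3 (the reference cylinder does not drift; uniqueness).} Fix $t_0$ large and let $\Gamma$ be the cylinder produced in Step~1 at time $t_0$. Suppose inductively that $\Sigma_t$ is a small $C^{2,\alpha}$ normal graph $U(\cdot,t)$ over $\Gamma$ on $B_{R_1-1}$ for $t\in[t_0,t_0+T]$. By closeness, the RMCFf equation yields $\|\partial_t U(\cdot,t)\|_{L^2}\leq C\bigl(\|\phi\|_{L^2(\Sigma_t\cap B_{R_1})}+e^{-t/2}\bigr)$, and then --- exactly as in the proof of Theorem~\ref{3.4}, via H\"older's inequality with weight $t^{1+\delta}$ for $0<\delta<\alpha-1$, integration by parts, and the bound $f(t)\leq C_0 t^{-\alpha}$ ---
\begin{align*}
\sup_{t_0\leq t_1\leq t_2\leq t_0+T}\bigl\|U(\cdot,t_2)-U(\cdot,t_1)\bigr\|_{L^2(B_{R_1-1})}\ \leq\ C\,t_1^{-\rho},\qquad \rho=\tfrac{\alpha-1}{2}>0,
\end{align*}
with $C$ independent of $T$. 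Interpolating this $L^2$ bound against the curvature bounds of Step~1 upgrades it to a small $C^{2,\alpha}$ bound on $B_{R_1-2}$; hence, once $t_0$ is so large that $C\,t_0^{-\rho}$ lies within the graphicality threshold, $\Sigma_t$ stays a small $C^{2,\alpha}$ graph over the \emph{fixed} cylinder $\Gamma$ on $B_{R_1-2}$ for $t\in[t_0,t_0+T]$, closing the induction on $T$. Therefore $\Sigma_t$ is a small $C^{2,\alpha}$ graph $U(\cdot,t)$ over $\Gamma$ for all $t\geq t_0$, and $\|U(\cdot,t_2)-U(\cdot,t_1)\|_{L^2}\leq C\,t_1^{-\rho}\to 0$, so $\Sigma_t$ converges to $\Gamma$. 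Consequently every tangent flow at $(0,0)$ is the static flow of $\Gamma$ --- the same cylinder, same axis, multiplicity one --- as claimed.

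\textbf{Main obstacle.} The crux is Step~3: one must know a priori that the reference cylinder can be kept \emph{fixed}, i.e.\ that the axis does not rotate or translate as $t\to\infty$. In the compact case the model shrinker is rigidly pinned down, but here this rigidity of the limit must instead be forced by quantitative control of the $L^2$ motion of the graph, which requires the decay $f(T)\lesssim T^{-\alpha}$ to be fast --- namely $\alpha>1$. That $\alpha>1$, equivalently that the exponent $\tfrac{1+\mu}{2}$ in Theorem~\ref{T:discrete} exceeds $\tfrac12$, is precisely what the gain $\mu>0$ of the scale comparison Theorem~\ref{thm:scale-comp} provides, and this rests in turn on the extension step of Section~\ref{sec:extension} outrunning the improvement step ($\mu>\theta$) and on keeping the localisation and cutoff error terms under control throughout.
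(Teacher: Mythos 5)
Your proposal is correct in substance and shares the paper's skeleton --- Proposition \ref{CIM03} plus White's regularity to obtain graphicality over some cylinder at all large times, then Theorem \ref{T:discrete} and Lemma \ref{L:discrete1} for the polynomial decay $\tilde F(T)-F_\Gamma\leq CT^{-\alpha}$ with $\alpha=\tfrac{1+\mu}{1-\mu}>1$ --- but it finishes by a different mechanism. The paper never fixes a reference cylinder: it sets $\delta_j^2=\tilde F(j-1)-\tilde F(j+2)$, bounds the $L^1$ distance between consecutive time slices by $\delta_j+Ke^{-j/2}$, and applies the dyadic Cauchy--Schwarz Lemma \ref{L:discrete2} to get $\sum_j\delta_j<\infty$, so the slices are Cauchy and the (cylindrical, by $(\dagger)$) limit is unique. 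You instead port the compact-case argument of Theorem \ref{3.4} --- weighted H\"older against $t^{1+\delta}$ plus integration by parts, using $\partial_t\tilde F\leq-\tfrac12\int\psi_t^2|\phi|^2\rho$ --- to get a tail bound $Ct_1^{-\rho}$, $\rho=\tfrac{\alpha-1}{2}$, on the $L^2$ motion, and then run an induction over a fixed cylinder $\Gamma$. Both routes rest on exactly the same quantitative input ($\alpha>1$, equivalently the exponent $\tfrac{1+\mu}{2}>\tfrac12$ furnished by the scale comparison), and your continuous weighting is a legitimate substitute for Lemma \ref{L:discrete2}; what the paper's formulation buys is that no fixed reference cylinder is ever needed.

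The one genuine loose point is the closure of your Step 3 induction. As written, the hypothesis is graphicality over the fixed $\Gamma$ on $B_{R_1-1}$ for $t\in[t_0,t_0+T]$, while the conclusion lives on the smaller ball $B_{R_1-2}$ and on the same time interval, so neither the spatial radius nor the time interval is actually re-supplied, and a naive iteration would exhaust the domain. To close it you must invoke the unconditional per-time statement of your Step 1 (for every $t\geq t_0$, $\Sigma_t$ is an $\epsilon_1$-graph over \emph{some} cylinder $\Gamma_t$ on $B_{R_1}$, independent of the induction) together with a comparison of cylinders: since $\Sigma_t$ is, by the $Ct_0^{-\rho}$ motion bound, $L^2$-close to a small graph over $\Gamma$ and simultaneously $C^{2,\alpha}$-close to $\Gamma_t$ on a large ball, the cylinder $\Gamma_t$ must itself be close to $\Gamma$, which restores graphicality over the fixed $\Gamma$ on the full ball $B_{R_1-1}$ with norm below the threshold and lets the time interval extend. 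This is repairable with ingredients you already have, but it needs to be spelled out --- it is precisely the step that the paper's $\delta_j$/$L^1$-Cauchy formulation avoids.
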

\begin{proof}
We may assume without loss of generality that the singular point is $(0,0)$, and take $r_0>0$ such that $B_{4r_0} \subset \mathcal{U}$. Then we consider the corresponding RMCFf $\Sigma_t$, to which all the results of this section apply. 

Let $\delta_j = \sqrt{\tilde{F}(j-1) - \tilde{F}(j+2)}$. By equation \eqref{eq:phi-Ftilde} we have that $\left( \int_{j-1}^{j+2} \|\phi\|^2_{L^2(\Sigma_t \cap B_{3e^{t/2}r_0})}dt \right)^\frac{1}{2} < C\delta_j$. 
We proceed as in \cite[Theorem 0.2]{colding2015uniqueness}. By the rigidity of the cylinder, Proposition \ref{CIM03}, any other tangent flow must be induced by a cylinder. 
By White's local regularity, it follows that for any $R$ there exists $t_0$ so that the RMCFf satisfies:
\begin{itemize}
\item[($\dagger$)] For any $T\geq t_0$, there is a cylinder $\Gamma$ so that for all $t\in [T-1,T+1]$, $\Sigma_t\cap B_{R_1}$ is a normal graph over $\Gamma$ with $C^{2,\alpha}$ norm at most $\epsilon_1$. 
\end{itemize}
Since $|\mathbf{G}^\perp|$ is bounded, the $L^1$ distance between time slices of the RMCFf is bounded by $\delta_j + Ke^{-j/2}$. Thus to prove uniqueness, it is enough to show that $\sum_j (\delta_j + e^{-j/2})$ converges. The geometric series $\sum_j e^{-j/2}$ certainly converges, and combining the discrete differential inequality, Theorem \ref{T:discrete}, with Lemma \ref{L:discrete1} and Lemma \ref{L:discrete2} for $f(t) = \tilde{F}(t) - F(\Gamma)$ shows that in fact $\sum_j \delta_j^\beta$ converges for some $\beta<1$. This completes the proof. 
\end{proof}


\appendix

\section{Solving the discrete differential inequality}\label{A: discrete}

\begin{lemma}\label{L:discrete1}
Suppose that $f:[0,\infty)\to [0,\infty)$ is a non-increasing function, and there are constants $\gamma>0$, $K>0$ and $E(t)\geq 0$ so that for $t\geq 1$, we have 
\begin{align*}
  f(t)^{1+\gamma}  \leq K(f(t-1)-f(t+1)) + E(t).
\end{align*}
If $E(t) = o(t^{-\frac{\gamma+1}{\gamma}})$, there exists a constant $C$ so that $f(t)\leq Ct^{-1/\gamma}$ for all $t\ge1$. 
\end{lemma}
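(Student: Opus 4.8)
The plan is to prove this by a discrete analog of the barrier/contradiction argument used for Lemma \ref{L:differential inequality compact}. Set $g(t) = Ct^{-1/\gamma}$ for a constant $C$ to be chosen, and aim to show $f(t) \leq g(t)$ for all $t \geq 1$. First I would dispose of the base case: since $f$ is non-increasing and finite, $f(t) \leq f(1)$ on $[1,2]$, so choosing $C$ large enough forces $f \leq g$ there. The heart of the matter is to run an induction or a ``first crossing'' argument over unit-length windows: suppose $T$ is the first time at which $f(T) = g(T)$ with $f \leq g$ on $[1,T]$. The discrete hypothesis at $t=T$ gives
\begin{align*}
g(T)^{1+\gamma} = f(T)^{1+\gamma} \leq K\bigl(f(T-1) - f(T+1)\bigr) + E(T) \leq K\bigl(g(T-1) - f(T+1)\bigr) + E(T),
\end{align*}
and the goal is to derive a contradiction by showing the right side is too small, i.e. that $f(T+1)$ would have to be negative or that the inequality is violated outright.

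The key analytic input is the telescoping/summation structure: because $f$ is non-increasing and nonnegative, the increments $f(t-1)-f(t+1)$ over a sequence of shifted times sum to something bounded by $f$ at the start, hence controlled by $Ct^{-1/\gamma}$ under the inductive hypothesis. Concretely, I expect the cleanest route is \emph{not} a pointwise first-crossing argument but a summation one: iterate the inequality $f(t)^{1+\gamma} \leq K(f(t-1)-f(t+1)) + E(t)$ along an arithmetic progression $t, t+2, t+4, \dots$ (or two interleaved progressions for odd/even parity), sum, and use that the telescoped differences collapse to $f(t-1) + f(t) \leq 2f(t-1)$. This yields, roughly, $\sum_{k\geq 0} f(t+2k)^{1+\gamma} \leq 2K f(t-1) + \sum_{k\geq 0} E(t+2k)$, and since $E(s) = o(s^{-(1+\gamma)/\gamma})$ the error sum is $o\bigl(t \cdot t^{-(1+\gamma)/\gamma}\bigr) = o(t^{-1/\gamma})$. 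Then a standard lemma — if a non-increasing nonnegative sequence $a_k$ satisfies $\sum_{j\geq k} a_j^{1+\gamma} \lesssim a_{k-1}$ then $a_k \lesssim k^{-1/\gamma}$ — converts this to the desired decay. I would establish this auxiliary sequence lemma first (it is the discrete version of ``$f' \leq -cf^{1+\gamma} \Rightarrow f \lesssim t^{-1/\gamma}$'' and is proven by the same kind of induction), then apply it with $a_k = f(t_0 + 2k)$, absorbing the $E$ contribution into the constant.

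The main obstacle I anticipate is bookkeeping rather than conceptual: handling the two parities separately (the recurrence relates $t-1$ and $t+1$, both of which have the opposite parity of $t$, so the telescoping naturally splits into odd and even subsequences), and carefully tracking that the accumulated error term $\sum E$ stays of lower order than $t^{-1/\gamma}$ — this is exactly where the hypothesis $E(t) = o(t^{-(1+\gamma)/\gamma})$ is sharp, since summing $\sim t$ terms of size $t^{-(1+\gamma)/\gamma}$ gives $t^{-1/\gamma}$, so we genuinely need the little-$o$ and a tail estimate $\sum_{j \geq k} E(t_0+2j) = o(t_0^{-1/\gamma})$ as $k,t_0 \to \infty$. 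I would also need to make sure the constant $C$ in the final bound is allowed to depend on $f(1)$, $\gamma$, $K$, and the profile of $E$, which the statement permits. Everything else — monotonicity, nonnegativity, the elementary inequality $g(T-1)/g(T) \to 1$ controlling the telescoped main term — is routine.
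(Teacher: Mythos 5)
Your summation route is viable and genuinely different in organization from the paper's argument. The paper proves the lemma directly: after normalizing $K=1$ and $f(0)\in(0,1/2]$, choosing $t_1$ so that $t^{\frac{\gamma+1}{\gamma}}E(t-1)\le \tfrac12 f(0)^{1+\gamma}$ and a suitable $t_0$, it picks $C$ with $f(0)=Ct_0^{-1/\gamma}$ and runs an induction over windows $(t_0+2j,t_0+2j+2]$, deriving a contradiction at any point where $f(t)>Ct^{-1/\gamma}$ from the chain $f(t)^{1+\gamma}\le f(t-1)^{1+\gamma}\le f(t-2)-f(t)+E(t-1)$ together with the elementary inequality $(1+h)^{-\gamma}\le 1-2^{-1-\gamma}\gamma h$; no telescoping is used, and the error is carried pointwise through the induction. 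Your plan instead telescopes the recurrence along integer steps to get $\sum_{s\ge t}f(s)^{1+\gamma}\le K\bigl(f(t-1)+f(t)\bigr)+\sum_{s\ge t}E(s)$, notes that the error tail is $o(t^{-1/\gamma})$ (correct, though it is the tail of a convergent series rather than ``$\sim t$ terms''), and reduces to a discrete Bihari-type lemma for the tails $S_k=\sum_{j\ge k}a_j^{1+\gamma}$: from $S_k\le Ma_{k-1}$ and $a_{k-1}^{1+\gamma}=S_{k-1}-S_k$ one gets $S_k^{1+\gamma}\le M^{1+\gamma}(S_{k-1}-S_k)$, hence $S_k\lesssim k^{-1/\gamma}$, and then monotonicity ($k\,a_{2k}^{1+\gamma}\le S_k$) gives $a_k\lesssim k^{-1/\gamma}$. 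This buys a cleaner separation of the ``differential inequality'' content from the error bookkeeping, and it sidesteps any continuity issue (your abandoned first-crossing sketch would implicitly need $f$ continuous, which is not assumed; the paper's window induction avoids this too). Note, however, that the auxiliary lemma itself is proved by exactly the same barrier induction the paper uses, so the summation reorganizes rather than eliminates that ingredient.

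One step in your plan is not correct as stated and needs repair: ``absorbing the $E$ contribution into the constant.'' The error enters your tail inequality additively, at order $o(k^{-1/\gamma})$ --- the same order as the bound you are trying to prove --- so it cannot be absorbed into the multiplicative constant $M$ of the auxiliary lemma as you stated it. You must instead prove the auxiliary lemma in the form $\sum_{j\ge k}a_j^{1+\gamma}\le M a_{k-1}+\delta_k$ with $\delta_k=o(k^{-1/\gamma})$ (for instance by splitting into the cases $S_k\le 2\delta_k$, which already gives the bound at step $k$, and $S_k>2\delta_k$, where the clean differential inequality holds with $M$ replaced by $2M$), or else carry $E$ through the induction pointwise, which is precisely what the paper does via its choice of $t_1$ and $t_0$. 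With that additive-error version in hand, the rest of your outline (parity bookkeeping, bounding the telescoped term $f(t-1)$ by an earlier same-parity value using monotonicity, enlarging $C$ to cover $t\in[1,t_0]$ via $f(t)\le f(1)$) goes through.
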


\begin{proof}
Following Colding-Minicozzi \cite{colding2015uniqueness}, by scaling and translating $f$, we may assume without loss of generality that $f(0)\in (0,1/2]$ and $K=1$. By the assumption on $E(t)$ there exists $t_1$ so that $t^{\frac{\gamma+1}{\gamma}} E(t-1) \leq \frac{1}{2}f(0)^{1+\gamma}$ for $t\geq t_1$. Now set $t_0 = 2+ \max (t_1, 2^{3+\gamma} f(0)^{-\gamma} \gamma^{-1})$. 

Choose $C$ so that $f(0) =Ct_0^{-1/\gamma}$.
This implies $f(t) \leq C t^{-1/\gamma}$ for all $t\leq t_0$. 
We show by induction on $j$ that this inequality holds for all $t\leq t_0 +2j$. Indeed, suppose this holds for some $j$. 

By the recurrence on $f$ and using $K=1$, we have for $t\geq 2$
\begin{align}\label{eq:a1}
f(t)^{1+\gamma} \leq f(t-1)^{1+\gamma} \leq f(t-2) - f(t) + E(t-1).
\end{align}
 Suppose for the sake of contradiction that $f(t) > Ct^{-1/\gamma}$ for some $t\in (t_0+2j,t_0+2j+2]$. Note that by choice of $t_0$ which implies $t>1$ and $t>t_1$, we have 
 \begin{align}\label{eq:a2}
 C^{-1} t^{1/\gamma} E(t-1) \leq \frac{1}{2} C^{-1} f(0)^{1+\gamma} t^{-1} \leq \frac{1}{2}C^\gamma t^{-1}.
 \end{align}
Then using $(1+h)^{-\gamma} \leq 1-2^{-1-\gamma} \gamma h$ for $h\leq 1$, we have by \eqref{eq:a1}, \eqref{eq:a2} and our choice of $t_0$
\begin{align*}
\begin{split}
f(t-2)^{-\gamma} <& C^{-\gamma}t (1+ C^\gamma t^{-1} -C^{-1} t^{-1/\gamma} E(t-1))^{-\gamma}
\\\leq & C^{-\gamma}t (1+ \frac{1}{2} C^\gamma t^{-1})^{-\gamma} \\\leq & C^{-\gamma}(t- 2^{-2-\gamma}\gamma C^\gamma) \\\leq& C^{-\gamma} (t-2).
\end{split}
\end{align*}
But $f(t-2) \leq C(t-2)^{-1/\gamma}$ by the inductive hypothesis, so this is a contradiction. This completes the induction and the proof of the lemma.
\end{proof}

The following is essentially the content of the proof of Lemma 7.8 in the high codimension paper \cite{colding2019regularity}:

\begin{lemma}\label{L:discrete2}
Suppose that $\sum_{i=j}^\infty \delta_i^2 \leq C j^{-\rho}$ for some $\rho>1$ and some constant $C$. Then there exists $\bar{\alpha}<1$ such that $\sum_{j=1}^\infty \delta_j^{\bar{\alpha}} <\infty$. 
\end{lemma}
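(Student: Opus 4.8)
The plan is to show that the tail decay $\sum_{i \geq j} \delta_i^2 \leq C j^{-\rho}$ forces the individual terms $\delta_j$ to decay at a definite polynomial rate along ``most'' indices, and then to choose $\bar{\alpha} < 1$ small enough that even the slowly-decaying exceptional indices contribute a summable series. Concretely, write $S_j = \sum_{i \geq j} \delta_i^2$, so $S_j \leq C j^{-\rho}$ and $S_j$ is non-increasing. First I would dyadically decompose: on the block of indices $B_k = \{2^k, \ldots, 2^{k+1}-1\}$ we have $\sum_{i \in B_k} \delta_i^2 \leq S_{2^k} \leq C 2^{-k\rho}$. By H\"older's inequality with exponents $2/\bar\alpha$ and $2/(2-\bar\alpha)$,
\begin{align*}
\sum_{i \in B_k} \delta_i^{\bar\alpha} \leq \left( \sum_{i \in B_k} \delta_i^2 \right)^{\bar\alpha/2} \left( \# B_k \right)^{1 - \bar\alpha/2} \leq \left( C 2^{-k\rho} \right)^{\bar\alpha/2} (2^k)^{1 - \bar\alpha/2} = C^{\bar\alpha/2}\, 2^{k\left( 1 - \frac{\bar\alpha}{2}(1+\rho) \right)}.
\end{align*}
Summing over $k \geq 0$, this geometric series converges provided $1 - \frac{\bar\alpha}{2}(1+\rho) < 0$, i.e. $\bar\alpha > \frac{2}{1+\rho}$. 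Since $\rho > 1$ we have $\frac{2}{1+\rho} < 1$, so there is a valid choice of $\bar\alpha \in \left( \frac{2}{1+\rho}, 1 \right)$, and for any such $\bar\alpha$ we obtain $\sum_{j \geq 1} \delta_j^{\bar\alpha} < \infty$, which is the claim.

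The only genuine content is the H\"older step trading off the $\ell^2$ tail bound against the cardinality of each dyadic block; everything else is bookkeeping. I do not expect a serious obstacle here — the lemma is elementary and the dyadic-block argument is standard. One should just take care that the exponent inequality $\bar\alpha(1+\rho) > 2$ is compatible with $\bar\alpha < 1$, which is exactly where the hypothesis $\rho > 1$ is used (if $\rho = 1$ one would need $\bar\alpha > 1$ and the conclusion could fail). As an alternative to the dyadic decomposition, one could argue directly: from $S_j \leq C j^{-\rho}$ and monotonicity one gets $\delta_j^2 \leq S_j - S_{j+1}$ together with the pointwise bound $\min_{i \leq j} \delta_i^2 \leq S_1/j$, but the cleanest route is the block estimate above, so that is what I would write up.
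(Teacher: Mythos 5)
Your argument is correct: the dyadic-block decomposition together with H\"older's inequality with exponents $2/\bar\alpha$ and $2/(2-\bar\alpha)$ gives exactly the required summability once $\bar\alpha \in \bigl(\tfrac{2}{1+\rho},1\bigr)$, which is nonempty precisely because $\rho>1$. This is essentially the same argument the paper relies on (it gives no proof of its own, deferring to Lemma 7.8 of Colding--Minicozzi, whose proof trades the $\ell^2$ tail bound against block cardinality in just this way), so there is nothing further to add.
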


Note that if $\gamma<1$, the previous lemma would allow this lemma to be applied with $\delta_j = \sqrt{f(j) - f(j+1)}$ and $\rho = 1/\gamma$.

\section{Evolution of $\phi$}
\label{sec:evol-phi}

\begin{proof}[Proof of Lemma \ref{lem:evol-phi}]
Recall that $\phi = \mathbf{H} + \frac{x^\perp}{2}$ and the RMCFf satisfies $\pr_t x = \phi + e^{-t/2}\mathbf{G}^\perp$ where $\mathbf{G}$ is a fixed ambient vector field.
According to Proposition 1.3 in \cite{colding2019regularity}, for a general submanifold we  have 
\begin{align*}
\begin{split}
LA_{ij} =& A_{ij}+2\langle A_{jl}, A_{ik}\rangle A_{lk} - \langle A_{ml}, A_{il}\rangle A_{jm} \\&- \langle A_{jl}, A_{ml}\rangle A_{im} + \Hess_\phi(e_i,e_j) + \langle \phi, A_{im}\rangle A_{mj},
\end{split}
\end{align*}
where $L=\mathcal L+\frac12+\langle \cdot, A_{kl}\rangle A_{kl}$ with $\mathcal L=\Delta -\frac12\nabla_{x^T}$,
and
\begin{align*}
L\mathbf{H} = \mathbf{H} + \Lap \phi +  \langle \phi, A_{ij}\rangle A_{ij}.
\end{align*}
According to equation 20 in \cite{andrews2010mean}, for a normal flow $\pr_t x = \vec{V}$, the time evolution is
\begin{align*}
\nabla_{\pr_t} A_{ij} = \Hess_{\vec{V}}(e_i,e_j) + A_{ik} \langle \vec{V}, A_{jk}\rangle.
\end{align*}
Taking the trace gives $\nabla_{\pr_t} \mathbf{H} = \Lap (\vec{V}) + A_{ij}\langle A_{ij}, \vec{V}\rangle.$ In our setting this becomes 
\begin{align*}
 \nabla_{\pr_t} \mathbf{H} = \Lap \phi + \langle A_{ij},\phi\rangle A_{ij} + e^{-t/2}(\Lap {\mathbf G}^\perp + \langle A_{ij},{\mathbf G}^\perp\rangle A_{ij}).
\end{align*}

Consider the normal projection $\Pi$. We use $\Pi'$ to denote its derivative in either a spatial or time direction. Differentiating $\Pi^2=\Pi$ implies that $\Pi' \circ \Pi + \Pi\circ \Pi'=\Pi'$. Composing with $\Pi$ on both sides then gives $\Pi \circ \Pi' \circ \Pi=0$. 
Also by symmetry we have $\langle \Pi' \circ \Pi (V) ,e_j \rangle = \langle \Pi(V), \Pi'(e_j)\rangle$. 
For the spatial derivatives, differentiating $\Pi(e_i)=0$ gives 
\begin{align*}
(\nabla_j \Pi)(e_i) = -\Pi(\nabla_j e_i)  = - A_{ij}.
\end{align*}
 It follows that $\nabla_i \Pi(x) = - A(e_i, x^T) - e_j \langle A_{ij}, x^\perp\rangle$. One can follow \cite[Lemma 2.7]{colding2019regularity} in normal coordinates to find that $\nabla^\perp_k \nabla^\perp_i \Pi(x) = -(\nabla A)(e_i,e_k,x^T) - A(e_i, \nabla_j^T x^T)$. Note that $\langle e_k, \nabla_j^T x^T\rangle = g_{jk} - \langle e_k, \nabla_j x^\perp\rangle = g_{jk} + \langle A_{jk},x^\perp\rangle$. Therefore,
\begin{align*}
\nabla^\perp_k \nabla^\perp_i \Pi(x) = -(\nabla A)(e_i,e_k,x^T) - A_{ij} - A_{ik} \langle A_{jk}, x^\perp\rangle.
\end{align*}
 Taking the trace gives $ \Lap \Pi(x) = -\nabla^\perp_{x^T} \mathbf{H} - \mathbf{H} - \langle x^\perp, A_{ij}\rangle A_{ij}$ and adding the lower order terms gives
\begin{align*} 
Lx^\perp = -\nabla^\perp_{x^T}\phi -\mathbf{H} + \frac{1}{2}x^\perp.
\end{align*}
For the time derivative, differentiating $\Pi(e_i)=0$ and commuting the time derivative gives $(\pr_t \Pi) (e_i) = -\Pi(\pr_t e_i) = -\nabla_i^\perp (\phi + e^{-t/2} \mathbf{G}^\perp)$. Using the spatial derivative, this becomes 
\begin{align*}
(\pr_t \Pi)(e_i) = -\nabla_i^\perp \phi - e^{-t/2}(-A(e_i,\mathbf{G}^T) + \nabla_i^\perp \mathbf{G}).
\end{align*}
 In particular,
\begin{align*}
\begin{split}
\pr_t (\Pi(x)) =& (\pr_t \Pi)(x) + \Pi(\pr_t x) =\phi + e^{-t/2} \mathbf{G}^\perp  - \nabla^\perp_{x^T} \phi\\& + e^{-t/2} A(x^T, \mathbf{G}^T) - e_j \langle x^\perp, \nabla_j^\perp \phi - e^{-t/2}(A(e_j, \mathbf{G}^T) + \nabla_j^\perp \mathbf{G})\rangle.
\end{split}
\end{align*}
Combining this we obtain
\begin{align*}
(\nabla_{\pr_t} - L) \phi = e^{-t/2} (\Lap \mathbf{G}^\perp + \langle \mathbf{G}^\perp, A_{ij}\rangle A_{ij} + \frac{1}{2} A(x^T, \mathbf{G}^T) - \frac{1}{2} \nabla^\perp_{x^T} \mathbf{G} + \frac{1}{2}\mathbf{G})
\end{align*}
which finishes the proof.
\end{proof}

\section{Interior estimates for MCF with forcing term}

\label{sec:interior-estimates}

Let $\mathbf{F}$ be a smooth vector field on $\R^N$ and $x:I\times M^n\to\R^N$ a smooth family of embeddings which satisfy
\begin{align*}
\partial_t x=\mathbf{H}+{\mathbf{F}}^\perp.
\end{align*}
Our goal of this section is to prove interior estimates for this flow. 
For this purpose we begin with computing the evolution equations.

\begin{proposition}
We have
\begin{equation*}
\begin{split}
&(\partial_t-\Delta)|\nabla^k A|^2\\
 =&-2|\nabla^{k+1}A|^2+\nabla^kA \ast\nabla^{k+2}{\mathbf{F}}^\perp
\\
&+\nabla^kA\ast\sum_{\substack{i_1+i_2+i_3=k}} \nabla^{i_1}A\ast\nabla^{i_2}A\ast\nabla^{i_3}A  +\nabla^kA\ast\sum_{\substack{i_1+i_2+i_3=k}} \nabla^{i_1}A\ast\nabla^{i_2}A\ast\nabla^{i_3}{\mathbf{F}}^\perp.
\end{split}\end{equation*}
\end{proposition}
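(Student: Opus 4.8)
The plan is to derive the evolution equation for $|\nabla^k A|^2$ by first establishing the evolution equation for $\nabla^k A$ itself, and then applying the Bochner-type identity $(\partial_t - \Delta)|\nabla^k A|^2 = 2\langle \nabla^k A, (\partial_t - \Delta)\nabla^k A\rangle - 2|\nabla^{k+1}A|^2$. The $-2|\nabla^{k+1}A|^2$ term in the statement is exactly the gradient term here, so everything reduces to understanding the schematic structure of $(\partial_t - \Delta)\nabla^k A$.

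First I would recall the base case $k=0$. Starting from $\partial_t x = \mathbf{H} + \mathbf{F}^\perp$, the standard variation formulas (as in Andrews-Baker or Huisken-Polden, or equation 20 of \cite{andrews2010mean} already cited in the excerpt) give the evolution of the second fundamental form under a general normal flow $\partial_t x = \vec V$, namely $\nabla_{\partial_t} A = \Hess_{\vec V} + A \ast \vec V \ast A$ schematically. With $\vec V = \mathbf{H} + \mathbf{F}^\perp$ and using Simons' identity $\Hess_{\mathbf H} = \Delta A + A\ast A\ast A$, one gets $(\partial_t - \Delta)A = A\ast A\ast A + \nabla^2\mathbf{F}^\perp + A\ast A\ast \mathbf{F}^\perp$, where the last two terms come from $\Hess_{\mathbf{F}^\perp}$ and the $A\ast\vec V\ast A$ term respectively. (Here $\nabla^2 \mathbf{F}^\perp$ should be understood as including the intrinsic Hessian of the normal projection of the ambient field restricted to the submanifold, so it may also carry lower-order $A\ast\nabla\mathbf{F}$ and $A\ast A\ast \mathbf{F}$ pieces, all of which fit the claimed schematic form with appropriate reindexing.) This matches the asserted formula with $k=0$.

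Next I would induct on $k$. Commuting $\nabla$ past $(\partial_t - \Delta)$ on a tensor along the flow introduces curvature terms: schematically $[\nabla, \partial_t - \Delta]T = \sum A\ast \nabla A \ast T + \sum A \ast A \ast \nabla T + (\text{forcing})\ast \nabla^{\le}$, because the time derivative of the Christoffel symbols involves $\nabla(\mathbf H + \mathbf F^\perp)$ and the Riemann tensor of $\Sigma$ is $A\ast A$ by Gauss. Applying $\nabla^k$ to the $k=0$ equation and carefully bookkeeping these commutators via the Leibniz rule produces precisely sums $\nabla^{i_1}A\ast\nabla^{i_2}A\ast\nabla^{i_3}A$ with $i_1+i_2+i_3 = k$ (from the cubic term), sums $\nabla^{i_1}A\ast\nabla^{i_2}A\ast\nabla^{i_3}\mathbf F^\perp$ with $i_1+i_2+i_3 = k$ (from the $A\ast A\ast\mathbf F^\perp$ term and from commutators hitting the forcing), and the top term $\nabla^{k+2}\mathbf F^\perp$ (from $\nabla^k$ applied to $\nabla^2\mathbf F^\perp$). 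Then contracting with $\nabla^k A$ gives the stated identity. I would present this at the schematic ($\ast$-notation) level, as is standard for such interior-estimate computations, rather than tracking exact constants.

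The main obstacle will be the careful combinatorics of the commutator terms: one must verify that every term generated — including those where a time derivative or a Laplacian commutator lands on the ambient forcing field $\mathbf F$ restricted to the moving submanifold (which itself depends on $t$ through $x$ and on the metric through the normal projection $\Pi$) — genuinely falls into one of the three advertised families with the index constraint $i_1+i_2+i_3 = k$, and that no term of the form $\nabla^{k+1}A \ast (\cdots)$ survives outside the good $-2|\nabla^{k+1}A|^2$ term. Controlling the derivatives of $\mathbf F^\perp = \Pi(\mathbf F)$ requires the identities for $\nabla^j \Pi$ in terms of $\nabla^{<j} A$ (analogous to those derived in Appendix \ref{sec:evol-phi}), so that $\nabla^j \mathbf F^\perp$ expands as $\sum_{a+b = j} \nabla^a(\text{ambient } \mathbf F) \ast \nabla^b \Pi = \sum \nabla^{\le j}\mathbf F \ast (\text{products of } \nabla^{<j}A)$; each such piece must be checked to respect the schematic form. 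Once this bookkeeping is in place the proof is routine.
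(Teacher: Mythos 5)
Your proposal follows essentially the same route as the paper: the base case from the timelike Codazzi equations plus Simons' identity giving $(\partial_t-\Delta)A = \nabla^2\mathbf{F}^\perp + A\ast A\ast A + A\ast A\ast\mathbf{F}^\perp$, induction on $k$ via a Hamilton-type commutator lemma (with Gauss--Codazzi turning the curvature terms into $A\ast A$ and $A\ast\nabla A$ factors), and then contraction with $\nabla^k A$ to produce the $-2|\nabla^{k+1}A|^2$ term. The only detail the paper records that your Bochner identity elides is that $|\nabla^k A|^2$ is taken with respect to the evolving metric, so $\partial_t g^{ij}=2\langle \mathbf{H}+\mathbf{F}^\perp, A_{ij}\rangle$ contributes additional terms of the form $\nabla^k A\ast\nabla^k A\ast A\ast A$ and $\nabla^k A\ast\nabla^k A\ast A\ast\mathbf{F}^\perp$; these sit in the advertised families with $(i_1,i_2,i_3)=(k,0,0)$, so the conclusion is unaffected.
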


Note that the $\nabla\mathbf{F}^\perp$ terms may be related to Euclidean derivatives of $\mathbf{F}$ by
\begin{align*}
\nabla^k\mathbf F^\perp=\sum_{j_1+\dots+j_a+a+b=k}\nabla^{j_1}A\ast\dots\ast \nabla^{j_a}A\ast D^b\mathbf F.
\end{align*}

\begin{proof}
From the timelike Codazzi equations, cf. \cite[Equation (18)]{andrews2010mean}, we obtain
\begin{align*}
\partial_t A_{ij}=&  \nabla^2_{ij} ( \mathbf{H} + {\mathbf{F}}^\perp) + \langle \mathbf{H}+{\mathbf{F}}^\perp,  A_{ik} A_{jk}\rangle 
\end{align*} 
where $\{e_i\}_{i=1,\dots, n}$ are an orthonormal frame of $TM$. By Simons' identity, cf. \cite[Equation (23)]{andrews2010mean}, we have
\begin{align*}
(\partial_t-\Delta)A=\nabla^2{\mathbf{F}}^\perp+A\ast A\ast A+{\mathbf{F}}^\perp\ast A\ast A.
\end{align*}
Applying the lemma below, we inductively obtain
\begin{equation*}\begin{split}
\partial_t\nabla^kA=&\Delta \nabla^kA+\nabla^{k+2}{\mathbf{F}}^\perp \\
&+\sum_{\substack{i_1+i_2+i_3=k}} \nabla^{i_1}A\ast\nabla^{i_2}A\ast\nabla^{i_3}A  +\sum_{\substack{i_1+i_2+i_3=k}} \nabla^{i_1}A\ast\nabla^{i_2}A\ast\nabla^{i_3}{\mathbf{F}}^\perp.
\end{split}\end{equation*}
Next, we note that
\begin{align*}
\Delta|\nabla^{k}A|^2=2\langle\Delta\nabla^kA,\nabla^kA\rangle+2|\nabla^{k+1}A|^2
\end{align*}
and the inverse metric evolves as 
\begin{align*}
\partial_t g^{ij}=2\langle \mathbf{H}+{\mathbf{F}}^\perp, A_{ij} \rangle.
\end{align*}
Combining all the above identities yields the proposition. 
\end{proof}

\begin{lemma}
Let $S$ and $T$ be tensors satisfying the evolution equation
\begin{align*}
\partial_t S=\Delta S+T,
\end{align*}
then the covariant derivative $\nabla S$ satisfies an equation of the form 
\begin{align*}
\partial_t \nabla S=\Delta \nabla S+A\ast A\ast\nabla S+A\ast \nabla A\ast S+\nabla T.
\end{align*}
\end{lemma}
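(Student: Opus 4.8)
The plan is to establish the commutator lemma first and then bootstrap from the $k=0$ Simons-type identity recorded just above. The lemma is proved by a direct computation: from $\partial_t S = \Delta S + T$ we differentiate covariantly, and the only subtlety is that the spatial covariant derivative $\nabla$ and $\partial_t$ do not commute, nor do $\nabla$ and $\Delta$. The time-derivative of the Christoffel symbols of the induced metric is controlled by $A \ast \nabla(\mathbf{H}+\mathbf{F}^\perp)$, and since on a hypersurface (or submanifold) $\nabla \mathbf{H} = \nabla(\tr A)$ is $\nabla A$ up to contractions — and $\nabla \mathbf{F}^\perp$ expands as $A\ast \mathbf{F}^\perp + D\mathbf{F}$-type terms — the commutator $[\partial_t, \nabla]S$ contributes exactly a term of the schematic shape $A\ast\nabla A\ast S$ (together with lower-order forcing contributions which are absorbed into $\nabla T$). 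Commuting $\nabla$ past $\Delta$ costs curvature terms, which by the Gauss equation are quadratic in $A$, hence produce $A\ast A\ast \nabla S$. This yields the stated form of the lemma.

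Granting the lemma, the evolution of $\nabla^k A$ follows by induction on $k$. The base case $k=0$ is the identity $(\partial_t - \Delta)A = \nabla^2 \mathbf{F}^\perp + A\ast A \ast A + \mathbf{F}^\perp \ast A \ast A$, obtained from the timelike Codazzi equation \cite[Eq.~(18)]{andrews2010mean} together with Simons' identity \cite[Eq.~(23)]{andrews2010mean}. For the inductive step, apply the lemma with $S = \nabla^k A$ and $T$ equal to the already-established right-hand side at level $k$; the new contributions $A\ast A\ast \nabla S$ and $A\ast \nabla A \ast S$ are of the form $\sum_{i_1+i_2+i_3 = k+1}\nabla^{i_1}A\ast\nabla^{i_2}A\ast\nabla^{i_3}A$, while $\nabla T$ produces $\nabla^{k+3}\mathbf{F}^\perp$ plus cubic terms with one factor being a derivative of $\mathbf{F}^\perp$, matching the claimed schematic form at level $k+1$. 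One should be slightly careful that $\nabla^{k+2}\mathbf{F}^\perp$ really means the normal projection of the intrinsic derivative; the displayed formula relating $\nabla^k \mathbf{F}^\perp$ to Euclidean derivatives $D^b\mathbf{F}$ (contracted with copies of $A$) shows this is consistent with what is needed for the interior estimates, so no separate argument is required here.

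Finally, to get the evolution of $|\nabla^k A|^2$ one combines three ingredients: the identity $\Delta|\nabla^k A|^2 = 2\langle \Delta \nabla^k A, \nabla^k A\rangle + 2|\nabla^{k+1}A|^2$; the evolution $\partial_t \nabla^k A$ just derived; and the fact that the metric contractions implicit in $|\cdot|^2$ have time-derivative $\partial_t g^{ij} = 2\langle \mathbf{H}+\mathbf{F}^\perp, A_{ij}\rangle$, which only adds further terms of the schematic types already present (namely $\nabla^k A \ast \nabla^k A \ast (A\ast A + A \ast \mathbf{F}^\perp)$, absorbed into the cubic sums since $k+k \geq k$ and the extra $A$-contractions can be grouped accordingly). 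Pairing $\partial_t \nabla^k A$ against $\nabla^k A$ and subtracting $\Delta|\nabla^k A|^2$ leaves $-2|\nabla^{k+1}A|^2$ as the only negative term, plus $\nabla^k A \ast \nabla^{k+2}\mathbf{F}^\perp$ and the two cubic sums, which is precisely the claimed formula.

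\textbf{Main obstacle.} The computation is entirely routine in structure; the one place demanding genuine care is the commutator lemma, specifically verifying that $[\partial_t,\nabla]$ applied to a tensor really does produce only terms of the form $A\ast A\ast \nabla S + A \ast \nabla A \ast S$ (after folding the forcing contributions into $\nabla T$) — i.e.\ that no uncontrolled term survives. This hinges on the precise expression for $\partial_t \Gamma^k_{ij}$ along the forced flow and on the observation that $\nabla(\mathbf{H}+\mathbf{F}^\perp)$ is itself expressible through $\nabla A$, $A\ast\mathbf{F}^\perp$ and $D\mathbf{F}$, so that $A \ast \nabla(\mathbf{H}+\mathbf{F}^\perp)$ is of the asserted type. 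Everything downstream is bookkeeping with the $\ast$-notation and counting of derivative orders.
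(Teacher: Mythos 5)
Your argument is correct and in substance identical to the paper's: the paper simply cites Hamilton's Lemma 13.1 for the commutator identity $\partial_t\nabla S=\Delta\nabla S+\Rm\ast\nabla S+S\ast\nabla\Rm+\nabla T$ and then converts the curvature terms via the Gauss and Codazzi equations, which is exactly the $[\partial_t,\nabla]$ (through $\partial_t\Gamma$) and $[\nabla,\Delta]$ (through $\Rm=A\ast A$, $\nabla\Rm=A\ast\nabla A$) computation you carry out by hand. Your caveat about the forcing contributions to $\partial_t\Gamma$ (terms like $S\ast\nabla A\ast\mathbf{F}^\perp+S\ast A\ast\nabla\mathbf{F}^\perp$, which are not literally of the displayed form for an arbitrary $T$) reflects the same schematic looseness already present in the paper's statement and proof, and is harmless in the only application, where these terms fall into the $\nabla^{i_1}A\ast\nabla^{i_2}A\ast\nabla^{i_3}\mathbf{F}^\perp$ sums of the proposition.
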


\begin{proof}
Lemma 13.1 in \cite{hamilton82three} states
\begin{align*}
\partial_t\nabla S=\Delta\nabla S+\Rm\ast \nabla S+S\ast\nabla \Rm+\nabla T.
\end{align*}
Hence, the result follows from the Gauss and Codazzi equations.
\end{proof}

To state the interior estimates it will be convenient to define $r(\textbf x,t)=|\textbf x|^2+2nt$. As in \cite[Theorem 3.7]{ecker1991interior}, we obtain:

\begin{theorem}
Let $R>0$ be such that $\{x\in M_t:r(\textbf x,t)\le R^2\}$ is compact for $t\in[0,T]$. Then for $0\le \theta<1$, $t\in[0,T]$ and any integers $l, m\ge0$, we have
\begin{align*}
\sup_{\textbf x\in M_t:r(\textbf x,t)\le \theta R^2}|\nabla^{m+l}A|^2\le C_l t^{-l}
\end{align*}
where
\begin{align*}
C_l=C_l\left(K_l,m,n,N,\theta, \sup_{\textbf x\in M_s:r(\textbf x, s)\le R^2,\,s\in[0,t]}\sum_{i=0}^m|\nabla^iA|^2\right)
\end{align*}
where 
\begin{align*}
K_l:=\sum_{k=0}^{l+m+2}\|D^k{\mathbf{F}}\|_{C^0(\R^N)}.
\end{align*}
\end{theorem}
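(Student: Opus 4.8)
The plan is to follow the interior-estimate scheme of Ecker--Huisken \cite{ecker1991interior}, combined with the Bernstein--Bando--Shi bootstrap: one feeds the evolution equations above into a maximum-principle argument with a carefully chosen spacetime cutoff and polynomial-in-time weights. The key point, and the only place the forcing really intervenes, is the behaviour of $r(\textbf x,t)=|\textbf x|^2+2nt$ along MCFf. Since $\partial_t x=\mathbf H+\mathbf F^\perp$, the mean-curvature contribution to $(\partial_t-\Delta)r$ cancels exactly as in the unforced case, leaving $(\partial_t-\Delta)r=2\langle\textbf x,\mathbf F^\perp\rangle$; on $\{r\le R^2\}$ we have $|\textbf x|\le R$, so this is bounded by $2RK_l$. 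Hence the cutoff $\varphi:=(1-r/R^2)_+$ obeys $|\Delta\varphi|\le cK_l/R$, $\nabla\varphi=-2\textbf x^T/R^2$, and --- because $|\textbf x|^2=(1-\varphi)R^2-2nt$ --- the crucial inequality $|\nabla\varphi|^2\le \tfrac{4}{R^2}(1-\varphi)$. I would also record at the outset that, using the expansion $\nabla^k\mathbf F^\perp=\sum_{j_1+\dots+j_a+a+b=k}\nabla^{j_1}A\ast\cdots\ast\nabla^{j_a}A\ast D^b\mathbf F$ together with $\|D^b\mathbf F\|_{C^0}\le K_l$ for $b\le l+m+2$, every forcing term appearing in the evolution of $|\nabla^{m+l}A|^2$ is a sum of monomials in $\nabla^iA$ of total order at most $m+l+1$ with $K_l$-bounded coefficients; in particular $\nabla^{m+l+1}A$ is the highest derivative of $A$ that occurs anywhere, and it occurs only linearly.

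\textbf{Induction.} The estimate is proven by induction on $l$ for fixed $m$; the base case $l=0$ is the hypothesised bound on $\sum_{i\le m}|\nabla^iA|^2$. For the step, assume the estimate for $0,\dots,l-1$ and apply the maximum principle to the test function
\[
G=\sum_{j=0}^{l}c_j\,t^{\,j}\,\varphi^{p_j}\,|\nabla^{m+j}A|^2
\]
on the compact set $\{r\le R^2\}$, where the powers $p_0>p_1>\dots>p_l$ and the weights $c_0\gg c_1\gg\dots\gg c_l>0$ will be fixed in terms of $n,N,K_l$ and the lower-order bounds. From the Proposition above, the $j$-th summand produces the good term $-2c_jt^j\varphi^{p_j}|\nabla^{m+j+1}A|^2$, which I would use --- via Young's inequality --- to absorb: the forcing cross-term $c_jt^j\,\nabla^{m+j}A\ast\nabla^{m+j+2}\mathbf F^\perp$, which is linear in $\nabla^{m+j+1}A$; the gradient cross-terms coming from $\Delta(\varphi^{p_j}\,\cdot\,)$, the remaining $\varphi^{p_j-2}|\nabla\varphi|^2$-terms being handled by $|\nabla\varphi|^2\le cR^{-2}(1-\varphi)$ and $|\Delta\varphi|\le cR^{-1}$; and all of the polynomial terms $\nabla^{m+j}A\ast\sum\nabla^{i_1}A\ast\nabla^{i_2}A\ast\nabla^{i_3}A$ (and their forcing analogues) in which the top order $m+j$ occurs at most once, since by the inductive hypothesis their lower-order factors are bounded and the accompanying powers of $t$ match those in $G$. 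The two genuinely ``bad'' positive multiples of $|\nabla^{m+j}A|^2$ --- namely $j\,c_jt^{j-1}\varphi^{p_j}|\nabla^{m+j}A|^2$ from $\partial_t(t^j)$ and $\sim c_jt^j\varphi^{p_j}|A|^2|\nabla^{m+j}A|^2$ from the cubic term with two top-order factors --- are dominated, once $p_{j-1}\le p_j-1$ and $c_{j-1}/c_j$ is chosen large enough, by the good term $-2c_{j-1}t^{j-1}\varphi^{p_{j-1}}|\nabla^{m+j}A|^2$ of the level below. At level $j=0$ the $\partial_t(t^0)$ term is absent and the cubic term is bounded by the base case, so after all these absorptions one obtains $(\partial_t-\Delta)G\le C_\ast$ on $\{r\le R^2\}$.

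\textbf{Conclusion and main obstacle.} Since $\varphi=0$ on $\{r=R^2\}$, the function $G$ vanishes on the spatial part of the parabolic boundary, and $G(\cdot,0)$ is bounded by the base case; as $\{r\le R^2,\ s\le t\}$ is compact, the maximum principle gives $G\le \max G(\cdot,0)+C_\ast t$, a bound depending only on $n,N,K_l,\theta$ and the hypothesised bound on $\sum_{i\le m}|\nabla^iA|^2$. Restricting to $\{r\le\theta R^2\}$, where $\varphi\ge1-\theta>0$, and discarding the non-negative summands with $j<l$, this yields $t^l|\nabla^{m+l}A|^2\le C_l$ there, which is the asserted estimate. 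The main difficulty is entirely the bookkeeping in the inductive step: tracking the orders of the $A$-derivative monomials produced both by the $\ast$-sums in the Proposition and by expanding $\nabla^k\mathbf F^\perp$ (to confirm that nothing above $\nabla^{m+j+1}A$ ever appears and that it enters only linearly), and then pinning down the hierarchy of exponents $p_j$ and constants $c_j$ --- equivalently, shrinking the radius by a controlled amount at each of the finitely many steps --- so that every error term is genuinely absorbed by a good term.
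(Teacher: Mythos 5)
Your proposal is sound and sits inside the same Ecker--Huisken framework that the paper uses: both arguments proceed by induction on $l$, both derive the evolution inequalities in which the forcing enters through the expansion of $\nabla^k\mathbf{F}^\perp$ (so that the top derivative $\nabla^{m+j+1}A$ appears only linearly, with $K_l$-bounded coefficients, and is absorbed by Young's inequality into the good gradient term), and both finish with the maximum principle. The genuine difference is the test function. The paper follows \cite{ecker1991interior} verbatim: a two-level product $\psi^{l+1}|\nabla^{m+l+1}A|^2\bigl(\Lambda+\psi^{l}|\nabla^{m+l}A|^2\bigr)$ with $\psi(t)=R^2t/(R^2+t)$, and it delegates the spatial localization entirely to the citation. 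You instead use an additive Bando--Shi-type cascade $\sum_j c_j t^j\varphi^{p_j}|\nabla^{m+j}A|^2$ with an explicit cutoff in $r$, so that the good gradient term of each level swallows the bad terms of the level above. What your route buys is that it makes explicit the one place where the forcing meets the localization, namely $(\partial_t-\Delta)r=2\langle x,\mathbf{F}^\perp\rangle$, which the paper's ``follows exactly as in Ecker--Huisken'' leaves unsaid; what the paper's product trick buys is a shorter verification, since only two consecutive derivative levels are ever coupled and no hierarchy of exponents $p_j$ and weights $c_j$ has to be threaded.

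Two slips you should repair, neither fatal. First, the stated ordering $p_0>p_1>\dots>p_l$ contradicts the condition $p_{j-1}\le p_j-1$ that you actually invoke; since $\varphi\le 1$, dominating the level-$j$ bad terms $c_jt^{j-1}\varphi^{p_j}|\nabla^{m+j}A|^2$ by the level-$(j-1)$ good term $c_{j-1}t^{j-1}\varphi^{p_{j-1}}|\nabla^{m+j}A|^2$ forces the exponents to be \emph{increasing} in $j$, and to absorb the cross terms $\varphi^{p_j-2}|\nabla\varphi|^2|\nabla^{m+j}A|^2$ you need either a gap of $2$ between consecutive exponents or a squared cutoff satisfying $|\nabla\varphi|^2\lesssim \varphi/R^2$; the bound $|\nabla\varphi|^2\le 4(1-\varphi)/R^2$ does not supply the missing power of $\varphi$. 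Second, it is $(\partial_t-\Delta)\varphi$, not $\Delta\varphi$, that is bounded by $cK_l/R$: $\Delta\varphi$ alone contains $2\langle x,\mathbf{H}\rangle/R^2$ and is controlled by the hypothesised curvature bound, which is what in fact enters the computation. Finally, note (as in the paper's own sketch) that the quadratic terms with intermediate-order factors, e.g. when $m=0$, also produce, after Young, positive multiples of $t^j\varphi^{p_j}|\nabla^{m+j}A|^2$ rather than bounded errors; they are handled by the same cascade absorption, so this is bookkeeping rather than a gap, but it belongs to the list of terms you route to the level below.
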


\begin{proof}
The proof follows essentially as in \cite{ecker1991interior}, with some modifications to handle the forcing term. One proceeds by induction on $l$: Assume that for all $k\le l$ we have
\begin{align*}
\sup_{\textbf x\in M_t:r(\textbf x,t)\le \theta R^2}|\nabla^{m+k}A|^2\le  C_k\psi^{-k}
\end{align*}
where $C_k$ is defined as above and $\psi(t)=\frac{R^2t}{R^2+t}$.

It follows immediately from the evolution equations for $A$ and its derivatives that 

\begin{equation*}
\begin{split}
&(\partial_t-\Delta) |\nabla^{m+l+1} A|^2
\\\le&-2|\nabla^{m+l+2}A|^2+C|\nabla^{m+l+1}A|\left(\sum_{j_1+\dots+j_a+a+b=m+l+3}|\nabla^{j_1}A|\dots|\nabla^{j_a}A|| D^b\mathbf F|\right)\\
&+C|\nabla^{m+l+1}A|\sum_{\substack{i_1+i_2+i_3=m+l+1}} |\nabla^{i_1}A||\nabla^{i_2}A||\nabla^{i_3}A | \\&+C|\nabla^{m+l+1}A|\sum_{\substack{i_1+i_2+i_3=m+l+1}}| \nabla^{i_1}A||\nabla^{i_2}A|\left(\sum_{j_1+\dots+j_a+a+b=i_3}|\nabla^{j_1}A|\dots|\nabla^{j_a}A|| D^b\mathbf F|\right).
\end{split}
\end{equation*}

Using Young's inequality and then the induction hypotheses on the last two terms, one can estimate all derivatives of $A$ up to order $m$ with powers of $\psi^{-1}$
; the highest degree that appears is $m+l+1$. One proceeds similarly for the second term, using the first term to absorb the highest order derivative $|\nabla^{m+l+2}A|$. (Note that the second and fourth terms arise from the forcing term for the flow.) Ultimately, this yields
\begin{align*}
(\partial_t-\Delta)|\nabla^{m+l+1} A|^2
\le&-\frac32|\nabla^{m+l+2}A|^2+\tilde C|\nabla^{m+l+1}A|^2+\tilde C\psi^{-m-1}
\end{align*}
for some constant $\tilde C$ depending on $C_k$, $k\le l$, and $K_{l+1}$.
Similarly, we obtain
\begin{align*}
(\partial_t-\Delta)|\nabla^{m+l}A|^2\le&-\frac32|\nabla^{m+l+1}A|^2+\tilde C\psi^{-m}.
\end{align*}
The remainder of the proof follows exactly as in \cite{ecker1991interior}, by applying the maximum principle to the same test function
\begin{align*}
f:=\psi^{m+1}|\nabla^{m+l+1}A|^2(\Lambda+\psi^{m}|\nabla^{m+l}A|^2),
\end{align*}
for some large constant $\Lambda$ depending on $\tilde C$ and $K_{l+1}$.
\end{proof}

\bibliographystyle{alpha}
\bibliography{HirschZhu.bib}

\begin{thebibliography}{CM19b}

\bibitem[AB10]{andrews2010mean}
Ben Andrews and Charles Baker.
\newblock Mean curvature flow of pinched submanifolds to spheres.
\newblock {\em J. Differential Geom.}, 85(3):357--395, 2010.

\bibitem[BS09]{brendle2009manifolds}
Simon Brendle and Richard Schoen.
\newblock Manifolds with {$1/4$}-pinched curvature are space forms.
\newblock {\em J. Amer. Math. Soc.}, 22(1):287--307, 2009.

\bibitem[CIM15]{colding2015rigidity}
Tobias~Holck Colding, Tom Ilmanen, and William~P. Minicozzi, II.
\newblock Rigidity of generic singularities of mean curvature flow.
\newblock {\em Publ. Math. Inst. Hautes \'{E}tudes Sci.}, 121:363--382, 2015.

\bibitem[CL20]{chodosh2020generalized}
Otis Chodosh and Chao Li.
\newblock Generalized soap bubbles and the topology of manifolds with positive
  scalar curvature.
\newblock {\em arXiv preprint arXiv:2008.11888}, 2020.

\bibitem[CM14]{CMein}
Tobias~Holck Colding and William~P. Minicozzi, II.
\newblock On uniqueness of tangent cones for {Einstein} manifolds.
\newblock {\em Invent. Math.}, 196(3):515--588, 2014.

\bibitem[CM15a]{CMsurvey}
Tobias~Holck Colding and William~P. Minicozzi, II.
\newblock {{\L ojasiewicz}} inequalities and applications.
\newblock In {\em Surveys in differential geometry 2014. {R}egularity and
  evolution of nonlinear equations}, volume~19 of {\em Surv. Differ. Geom.},
  pages 63--82. Int. Press, Somerville, MA, 2015.

\bibitem[CM15b]{colding2015uniqueness}
Tobias~Holck Colding and William~P. Minicozzi, II.
\newblock Uniqueness of blowups and \l ojasiewicz inequalities.
\newblock {\em Ann. of Math. (2)}, 182(1):221--285, 2015.

\bibitem[CM16a]{CMdiff}
Tobias~Holck Colding and William~P. Minicozzi, II.
\newblock Differentiability of the arrival time.
\newblock {\em Comm. Pure Appl. Math.}, 69(12):2349--2363, 2016.

\bibitem[CM16b]{CMsing}
Tobias~Holck Colding and William~P. Minicozzi, II.
\newblock The singular set of mean curvature flow with generic singularities.
\newblock {\em Invent. Math.}, 204(2):443--471, 2016.

\bibitem[CM18a]{CMreg}
Tobias~Holck Colding and William~P. Minicozzi, II.
\newblock Regularity of the level set flow.
\newblock {\em Comm. Pure Appl. Math.}, 71(4):814--824, 2018.

\bibitem[CM18b]{CMwand}
Tobias~Holck Colding and William~P Minicozzi, II.
\newblock Wandering singularities.
\newblock {\em arXiv preprint arXiv:1809.03585}, 2018.

\bibitem[CM19a]{CMarnold}
Tobias~Holck Colding and William~P. Minicozzi, II.
\newblock Arnold-{Thom} gradient conjecture for the arrival time.
\newblock {\em Comm. Pure Appl. Math.}, 72(7):1548--1577, 2019.

\bibitem[CM19b]{CMcomplexity}
Tobias~Holck Colding and William~P Minicozzi, II.
\newblock Complexity of parabolic systems.
\newblock {\em arXiv preprint arXiv:1903.03499}, 2019.

\bibitem[CM19c]{colding2019regularity}
Tobias~Holck Colding and William~P Minicozzi, II.
\newblock Regularity of elliptic and parabolic systems.
\newblock {\em arXiv preprint arXiv:1905.00085}, 2019.

\bibitem[DO20]{deruelle2020ojasiewicz}
Alix Deruelle and Tristan Ozuch.
\newblock A {\l}ojasiewicz inequality for ale metrics.
\newblock {\em arXiv preprint arXiv:2007.09937}, 2020.

\bibitem[Ede20]{edelen}
Nick Edelen.
\newblock The free-boundary {B}rakke flow.
\newblock {\em J. Reine Angew. Math.}, 758:95--137, 2020.

\bibitem[EH91]{ecker1991interior}
Klaus Ecker and Gerhard Huisken.
\newblock Interior estimates for hypersurfaces moving by mean curvature.
\newblock {\em Invent. Math.}, 105(3):547--569, 1991.

\bibitem[Fee19]{F19}
Paul M.~N. Feehan.
\newblock Resolution of singularities and geometric proofs of the {{\L
  ojasiewicz}} inequalities.
\newblock {\em Geom. Topol.}, 23(7):3273--3313, 2019.

\bibitem[Fee20]{F20}
Paul M.~N. Feehan.
\newblock On the {Morse-Bott} property of analytic functions on {Banach} spaces
  with {{\L ojasiewicz}} exponent one half.
\newblock {\em Calc. Var. Partial Differential Equations}, 59(2):Paper No. 87,
  50, 2020.

\bibitem[Gro20]{gromov2020no}
Misha Gromov.
\newblock No metrics with positive scalar curvatures on aspherical 5-manifolds.
\newblock {\em arXiv preprint arXiv:2009.05332}, 2020.

\bibitem[Ham82]{hamilton82three}
Richard~S. Hamilton.
\newblock Three-manifolds with positive {R}icci curvature.
\newblock {\em J. Differential Geometry}, 17(2):255--306, 1982.

\bibitem[HI01]{huisken2001inverse}
Gerhard Huisken and Tom Ilmanen.
\newblock The inverse mean curvature flow and the {R}iemannian {P}enrose
  inequality.
\newblock {\em J. Differential Geom.}, 59(3):353--437, 2001.

\bibitem[Ilm95]{ilmanen1995singularities}
Tom Ilmanen.
\newblock Singularities of mean curvature flow of surfaces.
\newblock {\em preprint}, 1995.

\bibitem[LM20]{liokumovich2020waist}
Yevgeny Liokumovich and Davi Maximo.
\newblock Waist inequality for 3-manifolds with positive scalar curvature.
\newblock {\em arXiv preprint arXiv:2012.12478}, 2020.

\bibitem[Man14]{Mantoulidis}
Christos Mantoulidis.
\newblock T. {H.} {Colding} and {W.} {P.} {Minicozzi}'s ”{Uniqueness} of
  blowups and {Lojasiewicz} inequalities”.
\newblock {\em unpublished lecture notes}, 2014.

\bibitem[Per02]{perelman2002entropy}
Grisha Perelman.
\newblock The entropy formula for the ricci flow and its geometric
  applications.
\newblock {\em arXiv preprint math/0211159}, 2002.

\bibitem[Sch14]{schulze2014uniqueness}
Felix Schulze.
\newblock Uniqueness of compact tangent flows in mean curvature flow.
\newblock {\em J. Reine Angew. Math.}, 690:163--172, 2014.

\bibitem[Sim83]{simon1983asymptotics}
Leon Simon.
\newblock Asymptotics for a class of non-linear evolution equations, with
  applications to geometric problems.
\newblock {\em Annals of Mathematics}, pages 525--571, 1983.

\bibitem[SZ20]{sun2020rigidity}
Ao~Sun and Jonathan~J. Zhu.
\newblock Rigidity and {\l}ojasiewicz inequalities for clifford self-shrinkers.
\newblock {\em arXiv preprint arXiv:2011.01636}, 2020.

\bibitem[Whi97]{white1997stratification}
Brian White.
\newblock Stratification of minimal surfaces, mean curvature flows, and
  harmonic maps.
\newblock {\em J. Reine Angew. Math.}, 488:1--35, 1997.

\bibitem[Whi05]{white2005local}
Brian White.
\newblock A local regularity theorem for mean curvature flow.
\newblock {\em Ann. of Math. (2)}, 161(3):1487--1519, 2005.

\bibitem[Zhu20]{zhu2020ojasiewicz}
Jonathan~J. Zhu.
\newblock {\L}ojasiewicz inequalities, uniqueness and rigidity for cylindrical
  self-shrinkers.
\newblock {\em arXiv preprint arXiv:2011.01633}, 2020.

\end{thebibliography}

\end{document}